\newtheorem{theorem}{Theorem}[section]
\newtheorem{proposition}[theorem]{Proposition}
\newtheorem{lemma}[theorem]{Lemma}
\newtheorem{corollary}[theorem]{Corollary}
\newtheorem*{blank}{Theorem \ref{main-theorem1}}
\numberwithin{equation}{section}
\theoremstyle{definition}
\newtheorem{definition}[theorem]{Definition}
\newtheorem{remark}[theorem]{Remark}
\newtheorem{example}[theorem]{Example}
\newcommand{\Db}{{\rm D}^{\rm b}}
\newcommand{\Br}{{\rm Br}}
\newcommand{\Pic}{{\rm Pic}}
\newcommand{\Prym}{{\rm Prym}}
\newcommand{\ch}{{\rm ch}}
\newcommand{\Td}{{\rm Td}}
\newcommand{\coh}{{\cat{Coh}}}
\newcommand{\Hom}{{\rm Hom}}
\newcommand{\Spec}{{\rm Spec}}
\newcommand{\corr}{{\rm Corr}}
\newcommand{\id}{{\rm Id}}
\newcommand{\cat}[1]{\begin{bf}#1\end{bf}}
\newcommand{\cal}{\mathcal}
\newcommand{\ka}{{\cal A}}
\newcommand{\kb}{{\cal B}}
\newcommand{\kc}{{\cal C}}
\newcommand{\ke}{{\cal E}}
\newcommand{\kf}{{\cal F}}
\newcommand{\kg}{{\cal G}}
\newcommand{\km}{{\cal M}}
\newcommand{\ko}{{\cal O}}
\newcommand{\kz}{{\cal Z}}
\newcommand{\ZZ}{\mathbb{Z}}
\newcommand{\QQ}{\mathbb{Q}}
\newcommand{\CC}{\mathbb{C}}
\newcommand{\FF}{\mathbb{F}}
\newcommand{\PP}{\mathbb{P}}
\def\ee{\varepsilon}
\def\cI{\mathcal{I}}
\begin{document}

\title{Derived categories and rationality of conic bundles}

\author[M.\ Bernardara, M.\ Bolognesi]{Marcello Bernardara and Michele Bolognesi}

\address{M.Be.: Univerist\"at Duisburg--Essen, Fakult\"at f\"ur Mathematik. Universit\"atstr. 2, 45117 Essen (Germany),
\& Institut de Math\'{e}matiques de Toulouse (IMT), 118 route de Narbonne, F-31062 Toulouse Cedex 9, (France)}
\email{mbernard@math.univ-toulouse.fr}

\address{M.Bo.: IRMAR, Universit\'e de Rennes 1.
263 Av. G\'en\'eral Leclerc, 35042 Rennes CEDEX (France)} 
\email{michele.bolognesi@univ-rennes1.fr}

\begin{abstract}
We show that a standard conic bundle over a minimal rational surface is rational
and its Jacobian splits as the direct sum of Jacobians of curves if and
only if its derived category admits a semiorthogonal decomposition by exceptional objects and
the derived categories of those curves. Moreover, such a decomposition gives the splitting 
of the intermediate Jacobian also when the surface is not minimal.
\end{abstract}

\maketitle
\section{Introduction}

One of the main fields of research in the theory of derived categories is understanding how the geometry
of a smooth projective variety $X$ is encoded in the bounded derived category $\Db(X)$ of coherent sheaves on it.
One of the main ideas, first developed by Bondal and Orlov, is to understand to which extent this category
contains interesting information about birational geometry.

The biggest problem is to understand how this information can be traced out. The most promising and,
so far, prolific approach is studying semiorthogonal decompositions

$$\Db(X) = \langle \cat{A}_1, \ldots, \cat{A}_k \rangle.$$

In many interesting situations, one has such a decomposition with all or almost all of the $\cat{A}_i$ equivalent to
the derived category of a point. If $X$ is a projective space or a smooth quadric, all of the $\cat{A}_i$
are like this. It is expected that if a non-trivial subcategory appears in such decomposition, then
it has to carry informations about the birational geometry of $X$. For example, if $X$ is a $V_{14}$ Fano
threefold, then $\Db(X)$ admits a semiorthogonal decomposition with only one non-trivial component, say
$\cat{A}_X$. A similar decomposition holds for any smooth cubic threefold.
Kuznetsov showed that if $Y$ is the unique cubic
threefold birational to $X$ (see \cite{iliev-marku}), $\cat{A}_X$ is equivalent to the non-trivial component $\cat{A}_Y$
of $\Db(Y)$, and then it is a birational
invariant for $X$ \cite{kuznet-v14}. Moreover it has been shown in \cite{noi-cubic}, by reconstructing
the Fano variety of lines on $Y$ from $\cat{A}_Y$, that $\cat{A}_Y$ determines the isomorphism class of $Y$.
Similar correspondences between the non-trivial components of semiorthogonal decompositions of pairs of
Fano threefolds are described in \cite{kuznet-fano3folds}. The derived category of a smooth cubic fourfold
also admits such a decomposition, and it is conjectured that the non-trivial component determines its
rationality \cite{kuznet-cubic4folds}. 

It is a classical and still open problem in complex algebraic geometry to study the rationality of a standard
conic bundle $\pi: X \to S$
over a smooth projective surface. A necessary condition for rationality is that the intermediate Jacobian
$J(X)$ is isomorphic, as principally polarized Abelian variety, to the direct sum of Jacobians of smooth
projective curves. This allowed to prove the non rationality of smooth cubic threefolds \cite{clem-griff}.
The discriminant locus of the conic bundle 
is a curve $C \subset S$, with at most double points \cite[Prop. 1.2]{beauvillejaco}. The smooth points of $C$ correspond to two intersecting
lines, and the nodes to double lines. There is then a natural \'etale double cover
(an admissible cover if $C$ is singular \cite{beauvillejaco}) $\tilde{C} \to C$ of the curve $C$
associated to $X$. The intermediate
Jacobian $J(X)$ is then isomorphic to the Prym variety $P(\tilde{C}/C)$ as principally polarized Abelian
variety \cite{beauvillejaco}. This allows to show the non-rationality of conic bundles over $\PP^2$ with
discriminant curve of degree $\geq 6$ \cite{beauvillejaco}. Remark that if $S$ is not rational or $C$ disconnected,
then $X$ cannot be rational. We will then not consider these cases. Moreover, since $X$ is standard, $p_a(C)$ is
positive (see e.g. \cite[Sect. 1]{iskovconicduke}).

If $S$ is a minimal rational surface, then Shokurov \cite{shokuprym} has shown that $X$ is rational if
and only if $J(X)$ splits as the direct sum of Jacobians of smooth projective curves and that this happens only in
five cases: if $S=\PP^2$, either $C$ is a cubic, or a quartic, or $C$ is a quintic and $\tilde{C} \to C$ is given by
an even theta-characteristic; if $S= \FF_n$, either $C$ is hyperelliptic or $C$ is
trigonal and in both cases the map to $\PP^1$ is induced by the ruling of $S$. If $S$ is not minimal,
it is conjectured that there are essentially no more cases \cite{iskovconicduke}.

Our aim is to give a categorical approach to this problem, using semiorthogonal decompositions.
Indeed, in \cite{kuznetconicbundles} Kuznetsov considers 
the sheaf $\kb_0$ of even parts of Clifford algebras associated to the quadratic form defining the conic
fibration, and $\Db(S,\kb_0)$ the bounded derived category of coherent $\kb_0$-algebras over $S$.
He describes a fully faithful functor $\Phi: \Db(S,\kb_0) \to
\Db(X)$ and gives a semiorthogonal decomposition for the derived category of $X$
as follows:
$$\Db(X) = \langle \Phi\Db(S,\kb_0), \pi^* \Db(S) \rangle.$$
If $S$ is a rational surface, its derived category admits a full exceptional sequence, which leads
to the following semiorthogonal decomposition
\begin{equation}\label{decompDX}
\Db(X) = \langle \Phi\Db(S,\kb_0), E_1, \ldots, E_s \rangle,
\end{equation}
where $\{E_i\}_{i=1}^s$ are exceptional objects. The non-trivial information about the
geometry of the conic bundle is contained in the category $\Db(S, \kb_0)$. Note that in the case where $X$ is the blow-up
of a smooth cubic threefold $Y$ along a line, $\Db(S,\kb_0)$ contains $\cat{A}_Y$, which identifies the
isomorphism class of $Y$ \cite{noi-cubic}.
Remark that a different approach to the same problem, via generalized homological mirror symmetry, leads to the conjectures stated in
\cite{katzarkov1, katzarkov2}. Anyway we do not establish any link with the results described here.

%We show that a semiorthogonal decomposition of $\Db(S,\kb_0)$ via exceptional
%objects and derived categories of smooth projective curves allows to describe the intermediate Jacobian
%$J(X)$ as the direct sum of the Jacobians of these curves.

\begin{theorem}\label{main-theorem1}
Let $\pi: X \to S$ be a standard conic bundle over a rational surface. Suppose that $\{\Gamma_i\}_{i=1}^k$
are smooth projective curves and $k\geq 0$, with fully faithful functors $\Psi_i: \Db(\Gamma_i) \to \Db(S,\kb_0)$
for $i=1,\ldots k$, such that $\Db(S,\kb_0)$ admits a semiorthogonal decomposition:
\begin{equation}\label{decompo-in-main-thm}
\Db(S,\kb_0) = \langle \Psi_1 \Db(\Gamma_1), \ldots, \Psi_k \Db(\Gamma_k), E_1, \ldots, E_l \rangle,
\end{equation}
where $E_i$ are exceptional objects and $l \geq 0$. Then $J(X) = \bigoplus_{i=1}^k J (\Gamma_i)$ as principally
polarized Abelian variety.
\end{theorem}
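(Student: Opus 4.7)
The plan is to combine the given hypothesis with Kuznetsov's decomposition \eqref{decompDX} and then translate the resulting semiorthogonal decomposition of $\Db(X)$ into a splitting of the intermediate Jacobian via cohomological Fourier--Mukai transforms. Since $S$ is rational, $\Db(S)$ admits a full exceptional collection, so substituting \eqref{decompo-in-main-thm} into \eqref{decompDX} produces a semiorthogonal decomposition
\begin{equation*}
\Db(X)=\langle (\Phi\Psi_1)\Db(\Gamma_1),\dots,(\Phi\Psi_k)\Db(\Gamma_k),F_1,\dots,F_m\rangle,
\end{equation*}
where the $F_j$ (images of the $E_i$ under $\Phi$, together with pullbacks of exceptional objects on $S$) are exceptional objects of $\Db(X)$. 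By Orlov's representability theorem, each fully faithful functor $\Phi\Psi_i$ is represented by a Fourier--Mukai kernel $\ke_i\in\Db(\Gamma_i\times X)$.

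Next, I would extract a map from each kernel $\ke_i$ to the intermediate Jacobian of $X$ by correspondences. The Mukai vector of $\ke_i$ is an algebraic class in $H^{*}(\Gamma_i\times X,\QQ)$, whose K\"unneth component in $H^{1}(\Gamma_i)\otimes H^{3}(X)$ defines a morphism of integral Hodge structures, and hence a homomorphism of abelian varieties
\begin{equation*}
\alpha_i:J(\Gamma_i)\lto J(X).
\end{equation*}
Because $X$ is a standard conic bundle over a rational surface one has $h^{3,0}(X)=0$, so $H^{3}(X,\CC)=H^{2,1}\oplus H^{1,2}$ is purely transcendental; each $\ch(F_j)$ is algebraic and thus contributes nothing to $H^{3}(X)$. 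Consequently only the curve components of the decomposition affect $J(X)$, and it suffices to study the morphism $\alpha:=\bigoplus_i\alpha_i:\bigoplus_{i=1}^{k}J(\Gamma_i)\to J(X)$.

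The main obstacle is to upgrade $\alpha$ to an isomorphism of principally polarized abelian varieties. Three ingredients should do this. First, semiorthogonality between $(\Phi\Psi_i)\Db(\Gamma_i)$ and $(\Phi\Psi_j)\Db(\Gamma_j)$ for $i\neq j$, combined with the fact that cohomological Fourier--Mukai transforms preserve the Mukai pairing, forces the images of the $\alpha_i$ in $H^{3}(X,\QQ)$ to be pairwise orthogonal with respect to the intersection form. Second, the same compatibility applied to a single $\Gamma_i$ shows that the Mukai pairing restricted to $H^{1}(\Gamma_i)$ agrees (up to sign) with the intersection form, which is the form defining $\Theta_{\Gamma_i}$; hence $\alpha_i^{*}\Theta_X=\Theta_{\Gamma_i}$. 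Third, a rank count --- using that the Hochschild (or rational) cohomological invariants split along the semiorthogonal decomposition and that the exceptional $F_j$'s exhaust the contributions to even-degree cohomology --- gives $\sum_{i}g(\Gamma_i)=\dim J(X)$. Together these imply that $\alpha$ is a homomorphism of principally polarized abelian varieties of equal dimension preserving the polarizations, hence an isomorphism of p.p.a.v.'s.
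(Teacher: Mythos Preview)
Your overall architecture matches the paper's: pass from \eqref{decompo-in-main-thm} to a semiorthogonal decomposition of $\Db(X)$, extract from each Fourier--Mukai kernel a morphism $J(\Gamma_i)\to J(X)$, show it respects the principal polarization, and conclude by a dimension/surjectivity argument. Where you diverge is in the second ingredient, and that is where the real content lies.

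The paper does \emph{not} obtain $\alpha_i^*\Theta_X=\Theta_{\Gamma_i}$ from abstract Mukai-pairing compatibility. It uses instead a specific geometric input: the principal polarization on $J(X)\simeq P(\tilde C/C)$ is the \emph{incidence polarization} with respect to $X$ (Proposition~\ref{prop-incid-polar}, going back to Beauville). Concretely, writing $e=\ch(\ke)\cdot\Td$ for the correspondence attached to $\ke$, the paper shows that the self-correspondence $I(\ch_2(e))$ equals $-\id$ on $J(\Gamma)$ (because the right adjoint of $\Psi$ has kernel $\ke^\vee\otimes\omega_X[3]$ and composes to the identity), and then the incidence-polarization identity $(G\circ\psi)^*\theta_P=I(\ch_2(e))$ converts this into $\psi^*\theta_{J(X)}=\id$. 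This is the step that pins down the polarization \emph{exactly}, not merely up to a rational scalar.

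Your Mukai-pairing argument, as written, does not close this gap. Two issues: (i) the cohomological Fourier--Mukai transform and the Mukai pairing are defined via Chern characters and Todd classes, hence only over $\QQ$; the compatibility $\langle\Phi^H v,\Phi^H w\rangle_X=\langle v,w\rangle_\Gamma$ gives at best that $\alpha_i$ is an isometry of rational quadratic spaces, which yields $\alpha_i^*\Theta_X$ and $\Theta_{\Gamma_i}$ agree in $H^2(J(\Gamma_i),\QQ)$ but not in $H^2(J(\Gamma_i),\ZZ)$ --- and principality is an integral statement. (ii) To even reduce the Mukai pairing to the cup product on the relevant pieces you need $\Phi^H(H^1(\Gamma_i))\subset H^3(X)$, i.e.\ $H^1(X)=H^5(X)=0$; this holds because $S$ is rational (and is exactly what the paper exploits through the motivic decomposition \eqref{motivic-decomp}), but you only invoke $h^{3,0}(X)=0$, which is not enough. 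The ``up to sign'' also has to be tracked through the Todd twists on both sides.

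The paper's surjectivity step is also slightly different from your rank count: rather than matching Betti numbers, it uses that the semiorthogonal decomposition forces $A^2_\QQ(X)/\operatorname{im}(\psi_\QQ)$ to be a finite-dimensional $\QQ$-vector space, while as the cokernel of a morphism of abelian varieties it is (the points of) an abelian variety; hence it is zero. Your dimension count would work too, but only once injectivity of $\oplus\alpha_i$ is secured, and that again rests on the polarization statement above.

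In short: the missing idea in your proposal is the incidence-polarization property of $J(X)$ as the algebraic representative of $A^2(X)$; without it (or a genuine integral substitute), the Mukai-pairing argument only produces an isogeny compatible with the polarizations rationally, not an isomorphism of principally polarized abelian varieties.
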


If $S$ is non-rational, and then so is $X$, Theorem \ref{main-theorem1} fails; its proof relies indeed strictly
on the rationality of $S$. In \ref{non-rat-ex} we provide an example of a standard conic bundle over a non-rational
surface with $\Db(S,\kb_0)$ decomposing in derived categories of smooth projective curves.

The interest of Theorem \ref{main-theorem1} is twofold:
first it is the first non-trivial example where informations on the birational properties
and on algebraically trivial cycles are obtained
directly from a semiorthogonal decomposition. Secondly it gives a categorical criterion of rationality for
conic bundles over minimal surfaces. Indeed, Shokurov \cite[10.1]{shokuprym} proves that
for such surfaces a conic bundle is rational if and only if the Jacobian splits.
We can also prove the other implication by a case by case analysis.

\begin{theorem}\label{main-theorem2}
If $S$ is a rational minimal surface, then $X$ is rational and $J(X) = \bigoplus_{i=1}^k J(\Gamma_i)$ if and only if
there are fully faithful functors $\Psi_i: \Db(\Gamma_i) \to \Db(S,\kb_0)$ and
a semiorthogonal decomposition
$$\Db(S,\kb_0) = \langle \Psi_1 \Db(\Gamma_1), \ldots, \Psi_k \Db(\Gamma_k), E_1, \ldots, E_l \rangle,$$
where $E_i$ are exceptional objects and $l\geq 0$.
\end{theorem}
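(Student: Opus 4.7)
The ``if'' direction is immediate: given the semiorthogonal decomposition, Theorem \ref{main-theorem1} yields the isomorphism $J(X) \cong \bigoplus_{i=1}^k J(\Gamma_i)$ of principally polarized abelian varieties, and Shokurov's theorem \cite{shokuprym} then forces $X$ to be rational because $S$ is minimal and a Jacobian splitting of the intermediate Jacobian has been achieved. So only the reverse implication requires work.

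For the ``only if'' direction my plan is to invoke Shokurov's classification directly: if $S$ is minimal, $X$ is rational, and $J(X)$ splits as a sum of Jacobians, then $(S, C, \tilde C)$ belongs to exactly one of five explicit families, namely $S = \PP^2$ with $C$ a smooth cubic, a smooth quartic, or a smooth quintic whose double cover is given by an even theta-characteristic, and $S = \FF_n$ with $C$ hyperelliptic or trigonal and with the corresponding $g^1_2$ respectively $g^1_3$ cut out by the ruling. In each of the five cases I would construct a semiorthogonal decomposition of $\Db(S, \kb_0)$ of the required shape by hand.

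For the two $\FF_n$ families the strategy is to exploit the ruling $p: \FF_n \to \PP^1$: composing with $\pi$ presents $X$ as a quadric surface fibration over $\PP^1$, and the structure theorem for derived categories of quadric fibrations of Kuznetsov \cite{kuznetconicbundles}, combined with a twisted Fourier--Mukai transform, produces a fully faithful embedding of $\Db(\Gamma)$ into $\Db(S,\kb_0)$ for the relevant hyperelliptic or trigonal curve $\Gamma$, while the orthogonal complement is cut down to an exceptional collection by combining Beilinson's sequence on $\PP^1$ with the relative projective bundle structure of $\FF_n$ over it. For the three $\PP^2$ families one instead constructs the embedding by writing down an explicit $\kb_0$-module kernel on $\Gamma_i \times \PP^2$, the curves $\Gamma_i$ being determined directly from $C$ and from $\tilde C \to C$; the residual exceptional collection then arises from twisting the Beilinson sequence on $\PP^2$ by appropriate $\kb_0$-modules and discarding the summands absorbed by $\Psi_i \Db(\Gamma_i)$.

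The principal obstacle will be the quintic case over $\PP^2$: here the curve $\Gamma$ arises from an even theta-characteristic on $C$, and producing the fully faithful functor $\Psi : \Db(\Gamma) \to \Db(\PP^2, \kb_0)$ demands a careful construction of a universal $\kb_0$-module on $\Gamma \times \PP^2$ encoding both the Clifford algebra structure and the theta-characteristic datum, followed by verification of fully-faithfulness via a Bondal--Orlov type criterion. The other four cases, while not formal, reduce in various degrees of directness to well-established structure theorems for derived categories of projective bundles and quadric fibrations twisted by sheaves of algebras; in each one must also check that the number of curve factors matches the number of Jacobian summands predicted by Shokurov.
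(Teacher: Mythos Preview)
Your ``if'' direction and the overall architecture of the ``only if'' direction (reduce via Shokurov to five explicit families and treat each separately) match the paper exactly. The execution of the case analysis, however, is genuinely different.

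The paper never attempts to build $\kb_0$-module kernels on $\Gamma_i\times S$ directly, nor does it work inside $\Db(S,\kb_0)$. Instead it exploits a preliminary observation you omit: over a rational $S$ the sheaf $\kb_0$, and hence $\Db(S,\kb_0)$, is determined up to Morita equivalence by the double cover $\tilde C\to C$ alone (Remark~\ref{lem-square-equiv}). This buys the freedom to replace $X$ by any convenient standard conic bundle with the same discriminant data. In each of the five cases the paper then produces a specific birational model of $X$---a blow-up of $\PP^3$, of a smooth quadric threefold, of a $\PP^2$-bundle over $\PP^1$, or of a quadric surface bundle over $\PP^1$, along an explicit curve $\Gamma$ (or pair of curves)---and compares the Orlov blow-up decomposition of $\Db(X)$ with the Kuznetsov conic-bundle decomposition by a short sequence of mutations. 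The desired decomposition of $\Db(S,\kb_0)$ falls out of this comparison; the embedded curves $\Gamma_i$ are the centres of the blow-ups, not objects synthesised from $C$ and $\tilde C$.

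Your proposed route---writing down explicit $\kb_0$-module kernels and checking full faithfulness by Bondal--Orlov---is in principle conceivable but is not what the paper does, and you have not indicated how you would actually produce such kernels (especially in the quintic case, which you correctly flag as hardest). By contrast, in the paper the quintic case is one of the \emph{easiest}: Panin's description of $X$ as $\mathrm{Bl}_\Gamma\PP^3$ with $\Gamma$ a genus~5 curve makes the mutation argument very short. Similarly, your quadric-surface-fibration idea for $\FF_n$ is reasonable in spirit, but you would still need to relate the Clifford algebra of that fibration over $\PP^1$ to $\kb_0$ over $\FF_n$; the paper sidesteps this by going through the Casnati--Recillas trigonal construction and its hyperelliptic degeneration, again reducing to a blow-up and mutations.

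In short: your plan is not wrong, but the paper's mutation-and-model approach is both simpler and more uniform, and it rests on the reduction-to-a-model step (via Remark~\ref{lem-square-equiv}) that your proposal does not mention.
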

 
The key of the proof of Theorem \ref{main-theorem1} is the study of the maps induced by a fully faithful functor
$\Psi: \Db(\Gamma) \to \Db(X)$ on the rational Chow motives, as explained in \cite{orlov-motiv}, where
$\Gamma$ is a smooth projective curve of positive genus.
In particular, the biggest step consists in proving that such a functor induces an injective morphism
$\psi: J(\Gamma) \to J(X)$ preserving the principal polarization. The existence of the
required semiorthogonal decomposition implies then the bijectivity of the sum of the $\psi_i$'s.

%\medskip

The paper is organized as follows: in Sections \ref{sec-prel} and \ref{sec-basic-on-conic}
we recall respectively basic facts about motives
and derived categories and the construction from \cite{orlov-motiv}, and
the description of motive, derived category and intermediate Jacobian of a conic bundle.
In Section \ref{sect-criterion} we prove Theorem \ref{main-theorem1}, and in Sections \ref{sect-plane} and
\ref{sect-hirze} we finish the
proof of Theorem \ref{main-theorem2}, analyzing respectively the case $S=\PP^2$ and
$S=\FF_n$.

\subsection*{Notations}

Except for Section \ref{sec-prel}, we work over the complex field $\CC$. Any triangulated
category is assumed to be essentially small. 
Given a smooth projective variety $X$, we denote by $\Db(X)$ the bounded derived 
category of coherent sheaves on it, by $K_0(X)$ its Grothendieck group, by $CH^d(X)$ the Chow group of
codimension $d$ cycles and by $A^d(X)$ the subgroup of algebraically trivial cycles in $CH^d(X)$. The subscript
$_{\QQ}$ is used there whenever we consider $\QQ$-coefficients, while $h(X)$ already denotes the
rational Chow motive.
We will denote $\Prym(\tilde{C}/C)$ the Prym motive and $P(\tilde{C}/C)$
the Prym variety for an admissible double cover $\tilde{C} \to C$. 
Whenever a functor between derived categories is given, it will be denoted as underived,
for example for $f: X \to Y$, $f^*$ and $f_*$ denote respectively the derived pull-back and push-forward.

\medskip

{\small\noindent{\bf Acknowledgements.}
This work has been developed during visits of the authors at the Humboldt University of Berlin,
the University of Duisburg--Essen, the Roma III University, and the University of Rennes that are warmly acknowledged.
The first named author is grateful to H.Esnault for pointing him out \cite{orlov-motiv}, and to
her and A.Chatzistamatiou for useful discussions. We thank A.Beauville for pointing out
a missing case in an early version. Moreover, it is a pleasure to thank the people that shared
their views with us and encouraged us during the writing of this paper. In alphabetical order: A.Beauville,
G.Casnati, I.Dolgachev, V.Kanev, L.Katzarkov, A.Kuznetsov, M.Mella, A.Verra.
The first named author was supported by the SFB/TR 45 `Periods, moduli spaces, and arithmetic of algebraic varieties'.}
%We are standing on the shoulders of the giants! Genau!

\section{Preliminaries}\label{sec-prel}

In this Section, we recall some basic facts about motives, derived categories, semiorthogonal decompositions and
Fourier--Mukai functors. The
experienced reader can easily skip subsections \ref{intromotiv} and \ref{introderi}. In \ref{subs-FMandMotives},
we explain how a Fourier--Mukai functor induces a motivic map,
following \cite{orlov-motiv}, and we retrace the results from \cite{marcellocurves} under this point of view to give a baby
example clarifying some of the arguments we will use later.  

\subsection{Motives}\label{intromotiv}
We give a brief introduction to rational Chow motives, following \cite{scholl}. The most important results we will need
are the correspondence between the submotive $h^1(C) \subset h(C)$ of a smooth projective curve and its
Jacobian, and the Chow--K\"unneth decomposition of the motive of a smooth surface.

Let $X$ be a smooth projective scheme over a field $\Bbbk$. For any integer $d$, let $\kz^d(X)$ be the free Abelian
group generated by irreducible subvarieties of $X$ of codimension $d$. We denote by $CH^d(X) = \kz^d(S)/{\sim_{rat}}$ the
codimension $d$ Chow group and by $CH_{\QQ}^d(X) := CH^d(X) \otimes \QQ$. In this section, we are only concerned with
rational coefficients.
Let $Y$ be a smooth projective scheme. If $X$ is purely $d$-dimensional, we put, for any integer $r$,
$$\corr^r(X,Y) := CH_{\QQ}^{d+r}(X \times Y).$$
If $X = \coprod X_i$, where $X_i$ is connected, we put
$$\corr^r(X,Y) := \bigoplus \corr^r(X_i,Y) \subset CH_{\QQ}^*(X \times Y).$$
If $Z$ is a smooth projective scheme, the composition of correspondences is defined by a map
\begin{equation}\label{comp-of-corr}
\xymatrix@R=1pt{
\corr^r(X,Y) \otimes \corr^s(Y,Z) \ar[r] & \corr^{r+s} (X,Z) \\
f \otimes g \ar@{|->}[r] & g.f:=p_{13*}(p_{12}^* f . p_{23}^* g), 
}
\end{equation}
where $p_{ij}$ are the projections from $X \times Y \times Z$ onto products of two factors.

The category $\km_{\Bbbk}$ of Chow motives over $\Bbbk$ with rational coefficients is defined as follows: an object
of $\km_{\Bbbk}$ is a triple $(X,p,m)$, where $X$ is a variety, $m$ an integer and $p \in \corr^0(X,X)$ an idempotent,
called a \it projector\rm.
Morphisms from $(X,p,m)$ to $(Y,q,n)$ are given by elements of $\corr^{n-m}(X,Y)$ precomposed with $p$ and
composed with $q$:
\[
\Hom_{\km_{\Bbbk}}((X,p,m),(Y,q,n)) = q \corr^{n-m}(X,Y)p \subset \corr^{n-m}(X,Y).
\]

There exists a tensor product on $\km_{\Bbbk}$, that is defined on objects as follows:
$$(X,p,m)\otimes(Y,q,n)=(X\times Y,p\otimes q,m+n);$$
whereas on morphisms it is defined by
$$p_1f_1q_1\otimes p_2f_2q_2 =(p_1\otimes p_2)(f_1\otimes f_2)(q_1\otimes q_2) \in Corr^{n_1+n_2-m_1-m_2}(X_1\times X_2,Y_1\times Y_2)$$
if $p_if_iq_i: (X_i,p_i,m_i)\rightarrow (Y_i,q_i,n_i)$.
Moreover there is a natural functor $h$ from the category of smooth projective schemes to the category of motives,
defined by $h(X) = (X,\id,0)$, and, for any morphism $\phi: X \to Y$, $h(\phi)$ being the correspondence given
by the graph of $\phi$. A triple $(X,p,0)$ is then considered as a formal direct summand of
$h(X)$ corresponding to $p$.
We write $\QQ:=(\Spec {\Bbbk}, \id, 0)$ for the unit motive and $\QQ(-1) := (\Spec {\Bbbk}, \id, -1)$
for the Tate (or Lefschetz) motive,
and $\QQ(-i) := \QQ(-1)^{\otimes i}$ for $i > 0$. We denote $h(X)(-i) := h(X) \otimes \QQ(-i)$.
Given a triple $(X,p,0)$, the triple $(X,p,i)$ will then give a formal direct summand of $h(X)(i)$.
Finally, we have $\Hom(\QQ(-d),h(X))= CH_{\QQ}^d(X)$ for all smooth projective schemes $X$ and all integers $d$.

If $X$ is irreducible of dimension $d$ and has a rational point, the embedding $\alpha: pt \to X$ of the point defines a motivic map
$\QQ \to h(X)$. We denote by $h^0(X)$ its image: if $p_0 := pt \times X \subset X \times X$, the
triple $(X,p_0,0)$ gives $h^0(X)$. Denote by $h^{\geq 1}(X)$ the quotient of $h(X)$ via $h^0(X)$.
Similarly, we have that $\QQ(-d)$ is a quotient of $h(X)$, and we denote it by $h^{2d}(X)$:
if $p_{2d} := X \times pt \subset X \times X$, the
triple $(X,p_{2d},0)$ gives $h^{2d}(X)$.
For example, if $X = \PP^1$, we have that $h^{\geq 1}(\PP^1) = h^2(\PP^1)$ and then $h(\PP^1) \simeq \QQ \oplus \QQ(-1)$.
In the case of smooth projective curves of positive genus another factor which corresponds to the Jacobian variety
of the curve is appearing.

Let $C$ be a smooth projective connected curve with a rational point. Then one can define a motive $h^1(C)$ such that
we have a direct sum:
$$h(C) = h^0(C) \oplus h^1(C) \oplus h^2(C).$$
The main fact is that the theory of the motives $h^1(C)$ corresponds to that of Jacobian varieties (up to isogeny). Indeed we have
$$\Hom(h^1(C),h^1(C')) = \Hom(J(C),J(C'))\otimes \QQ.$$
In particular, the full subcategory of $\km_{\Bbbk}$ whose objects are direct summands of the motive $h^1(C)$ is equivalent
to the category of Abelian subvarieties of $J(C)$ up to isogeny.
Finally, for all $d$ there is no non-trivial map $h^1(C) \to h^1(C)$ factoring through $\QQ(-d)$. Indeed, we have
$\Hom(h^1(C),\QQ(-d)) = CH_{\QQ}^{1-d}(C)_{num=0}$, the numerically trivial part of the Chow group
of rational codimension $1-d$ cycles. Such group is zero unless $d=0$. On the other hand,
$\Hom(\QQ(-d),h^1(C))=CH_{\QQ}^d(C)_{num=0}$, the numerically trivial part of the Chow group
of rational codimension $d$ cycles. Such group is zero unless $d=1$.

Let $S$ be a surface. Murre constructed \cite{murre} the motives $h^i(S)$, defined by projectors $p_i$ in $CH_{\QQ}^i(S \times S)$
for $i=1,2,3$, and described a decomposition
$$h(S) = h^0(S) \oplus h^1(S) \oplus h^2(S) \oplus h^3(S) \oplus h^4(S).$$
We already remarked that $h^0(S) = \QQ$ and $h^4(S) = \QQ(-2)$. Roughly speaking, the submotive $h^1(S)$ carries the Picard variety,
the submotive $h^3(S)$ the Albanese variety and the submotive $h^2(S)$ carries the N\'eron--Severi group, the Albanese kernel
and the transcendental cycles. If $S$ is a smooth rational surface and $\Bbbk=\bar{\Bbbk}$,
then $h^1(S)$ and $h^3(S)$ are trivial, while
$h^2(S) \simeq \QQ(-1)^{\rho}$, where $\rho$ is the rank of the N\'eron--Severi group. In particular, the motive of
$S$ splits in a finite direct sum of (differently twisted) Tate motives.

In general, it is expected that if $X$ is a smooth projective $d$-dimensional variety, there exist projectors
$p_i$ in $CH_{\QQ}^i(X \times X)$ defining motives $h^i(X)$ such that $h(X) = \oplus_{i=0}^{2d} h^i(X)$. Such
a decomposition is called a \it Chow--K\"unneth decomposition\rm.
We have seen that the motive of any smooth projective curve or surface admits a Chow--K\"unneth decomposition.
This is true also for the motive of a smooth uniruled complex threefold \cite{angel-mstach}.

\subsection{Semiorthogonal decomposition, exceptional objects and mutations}\label{introderi}
We introduce here semiorthogonal decompositions, exceptional objects and mutations in a $\Bbbk$-linear triangulated category
$\cat{T}$,
following \cite{bondal-repr,bondal-kapranov,bondal-orlov}, and give some examples which will be useful later on.
Our only applications will be given in the case where $\cat{T}$ is the bounded derived category of a smooth
projective variety, but we stick to the more general context. 
Recall that whenever a functor between derived categories is given, it will be denoted as underived,
for example for $f: X \to Y$, $f^*$ and $f_*$ denote respectively the derived pull-back and push-forward.

A full triangulated category $\cat{A}$ of
$\cat{T}$ is called \it admissible \rm if the embedding functor admits a left and a right adjoint.

\begin{definition}[\cite{bondal-kapranov,bondal-orlov}]\label{def-semiortho}
A \it semiorthogonal decomposition \rm of $\cat{T}$ is a sequence of full admissible triangulated subcategories
$\cat{A}_1, \ldots, \cat{A}_n$ of $\cat{T}$ such that $\Hom_{\cat{T}}(A_i,A_j) = 0$ for all $i>j$ and for 
all objects $A_i$ in $\cat{A}_i$ and $A_j$ in $\cat{A}_j$, and for every object $T$ of $\cat{T}$, there is a chain
of morphisms $0=T_n \to T_{n-1} \to \ldots \to T_1 \to T_0 = T$ such that the cone of $T_k \to T_{k-1}$ is
an object of $\cat{A}_k$ for all $k=1,\ldots,n$. Such a decomposition will be written
$$\cat{T} = \langle \cat{A}_1, \ldots, \cat{A}_n \rangle.$$
\end{definition}

\begin{definition}[\cite{bondal-repr}]\label{def-except}
An object $E$ of $\cat{T}$ is called \it exceptional \rm if $\Hom_{\cat{T}} (E,E) = \Bbbk$, and $\Hom_{\cat{T}}(E,E[i])=0$
for all $i \neq 0$. A collection $(E_1,\ldots,E_l)$ of exceptional objects is called \it exceptional \rm if
$\Hom_{\cat{T}}(E_j,E_k[i])=0$ for all $j>k$ and for all integer $i$. If no confusion arises, given an
exceptional sequence $(E_1, \ldots, E_l)$ in $\cat{T}$, we will denote by $\cat{E}:= \langle E_1, \ldots, E_l \rangle$
the triangulated subcategory of $\cat{T}$ generated by the sequence. 
\end{definition}

If $E$ in $\cat{T}$ is an exceptional object, the triangulated category generated by $E$ (that is, the smallest full
triangulated subcategory of $\cat{T}$ containing $E$) is equivalent to the derived category of a point, seen as a smooth projective
variety. The equivalence $\Db(pt) \to \langle E \rangle \subset \cat{T}$ is indeed given by sending $\ko_{pt}$ to $E$.
Given an exceptional collection $(E_1,\ldots,E_l)$ in the derived category $\Db(X)$ of a smooth projective variety,
there is a semiorthogonal decomposition \cite{bondal-orlov}
$$\Db(X) = \langle \cat{A},\cat{E}\rangle,$$
where $\cat{A}$ is the full triangulated subcategory whose objects are all the $A$ satisfying $\Hom(E_i,A)=0$
for all $i=1,\ldots,l,$. We say that the exceptional sequence is \it full \rm if the category $\cat{A}$ is trivial.

There are many examples of smooth projective varieties admitting a full exceptional sequence.
For example the sequence $(\ko(i), \ldots, \ko(i+n))$ is full exceptional in $\Db(\PP^n)$ for all $i$ integer
\cite{beilinson}. If $X$ is an odd-dimensional smooth quadric hypersurface in $\PP^n$ and $\Sigma$ the spinor bundle,
the sequence $(\Sigma(i),\ko(i+1),\ldots,\ko(i+n))$ is full exceptional in $\Db(X)$ and a similar sequence
(with two spinor bundles) exists for even-dimensional smooth quadric hypersurfaces \cite{kapranovquadric}.

\begin{proposition}[\cite{orlovprojbund}]\label{prop-decom-projbund}
Let $X$ be a smooth projective variety and $F$ a locally free sheaf of rank $r+1$ over it.
Let $p: \PP(F) \to X$ be the associated projective bundle. The functor $p^*:\Db(X) \to 
\Db(\PP(F))$ is fully faithful and for all integer $i$ we have the semiorthogonal decomposition:
$$\Db(\PP(F)) = \langle p^* \Db(X) \otimes \ko_{\PP/X}(i), \ldots, p^* \Db(X) \otimes \ko_{\PP/X}(i+r) \rangle,$$
where $\ko_{\PP/X}(1)$ is the Grothendieck line bundle of the projectivization. 
\end{proposition}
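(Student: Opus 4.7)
The plan is to establish two statements: (i) that $p^\ast$ is fully faithful and the given sequence of subcategories is semiorthogonal, both of which follow from the projection formula combined with the computation of $\aR p_\ast \ko_{\PP/X}(k)$; (ii) that the subcategories generate $\Db(\PP(F))$, via a relative Beilinson resolution of the diagonal in $\PP(F) \times_X \PP(F)$. The two key inputs are the projection formula $p_\ast(p^\ast A \otimes G) \iso A \otimes p_\ast G$ for $A \in \Db(X)$ and $G \in \Db(\PP(F))$, and the fiberwise cohomology computation $\aR p_\ast \ko_{\PP/X}(k) = 0$ for $-r \leq k \leq -1$, with $\aR p_\ast \ko_{\PP(F)} = \ko_X$; both reduce by flat base change to the classical vanishing of $H^\ast(\PP^r, \ko(k))$ in this range.

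For fully faithfulness, I would apply the adjunction $(p^\ast, p_\ast)$ together with the projection formula to obtain
$$\RHom_{\PP(F)}(p^\ast A, p^\ast B) \iso \RHom_X(A, B \otimes \aR p_\ast \ko_{\PP(F)}) \iso \RHom_X(A, B)$$
for $A, B \in \Db(X)$. For semiorthogonality the same computation gives, for $a > b$ in $\{0,\ldots,r\}$ and $A, B \in \Db(X)$,
$$\RHom_{\PP(F)}(p^\ast A \otimes \ko_{\PP/X}(i+a), p^\ast B \otimes \ko_{\PP/X}(i+b)) \iso \RHom_X(A, B \otimes \aR p_\ast \ko_{\PP/X}(b-a));$$
since $b-a$ lies in $\{-r, \ldots, -1\}$, the right-hand side vanishes.

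For generation, the plan is to construct a relative Beilinson resolution of $\ko_\Delta$ on $\PP(F) \times_X \PP(F)$ with terms of the form $p_1^\ast \Omega^k_{\PP/X}(k) \otimes p_2^\ast \ko_{\PP/X}(-k)$ for $0 \leq k \leq r$, obtained as the Koszul complex associated to a canonical section of a rank $r$ bundle cutting out $\Delta$ transversally, built from the relative Euler sequence $0 \to \Omega^1_{\PP/X}(1) \to p^\ast F^\vee \to \ko_{\PP/X}(1) \to 0$. For any $K \in \Db(\PP(F))$, the tautological identity $K \iso \aR p_{2\ast}(p_1^\ast K \otimes \ko_\Delta)$, combined with this resolution, the projection formula, and flat base change for $p_2 \circ p_1 = p \circ p_2$, exhibits $K$ as an iterated extension of objects of the form $p^\ast B_k \otimes \ko_{\PP/X}(-k)$ with $B_k := \aR p_\ast(K \otimes \Omega^k_{\PP/X}(k)) \in \Db(X)$. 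This proves the decomposition for $i = -r$; tensoring the entire decomposition by $\ko_{\PP/X}(i+r)$, an autoequivalence of $\Db(\PP(F))$, yields the decomposition for arbitrary $i$.

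The hard part is the construction of the Beilinson resolution and the verification that its Fourier--Mukai convolution with an arbitrary $K$ produces components landing in precisely the prescribed subcategories. The cleanest route is to realise the resolution invariantly, namely as the Koszul complex attached to the canonical section of $p_1^\ast \ko_{\PP/X}(1) \otimes p_2^\ast T_{\PP/X}(-1)$ obtained by composing $p_1^\ast \ko_{\PP/X}(-1) \hookrightarrow p^\ast F^\vee$ with $p^\ast F^\vee \twoheadrightarrow p_2^\ast T_{\PP/X}(-1)$, whose zero locus is transversely $\Delta$. Once this geometric input is in place, the remainder of the argument reduces to bookkeeping with the push-forward computations recalled above.
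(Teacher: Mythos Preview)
The paper does not supply a proof of this proposition; it is quoted from Orlov's original article \cite{orlovprojbund} and used as a black box. Your argument is the standard one and is essentially Orlov's: fully faithfulness and semiorthogonality from the projection formula together with $\aR p_\ast \ko_{\PP/X}(k)=0$ for $-r\le k\le -1$, and generation from the relative Beilinson resolution of the diagonal on $\PP(F)\times_X\PP(F)$. So your proposal is correct and matches the cited source.

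One small slip in your description of the section cutting out the diagonal: with the relative Euler sequence written as $0 \to \Omega^1_{\PP/X}(1) \to p^\ast F^\vee \to \ko_{\PP/X}(1) \to 0$, its dual reads $0 \to \ko_{\PP/X}(-1) \to p^\ast F \to T_{\PP/X}(-1) \to 0$, so the canonical section arises from composing $p_1^\ast \ko_{\PP/X}(-1)\hookrightarrow p^\ast F$ with $p^\ast F \twoheadrightarrow p_2^\ast T_{\PP/X}(-1)$, i.e.\ through $p^\ast F$ rather than $p^\ast F^\vee$. This is only a convention mismatch and does not affect the argument.
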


\begin{proposition}[\cite{orlovprojbund}]\label{decomp-blow-up}
Let $X$ be a smooth projective variety, $Y \hookrightarrow X$ a smooth projective subvariety
of codimension $d>1$ and $\ee: \widetilde{X} \to X$ the blow-up of $X$ along $Y$. Let $D \stackrel{\iota}{\hookrightarrow}
\widetilde{X}$ be the exceptional divisor and $p: D \to Y$ the restriction of $\ee$. Then 
$\ee^*: \Db(X) \to \Db(\widetilde{X})$ is fully faithful and, for all $j$, there are
fully faithful functors $\Psi^j:\ee^* \Db(Y) \to D(\widetilde{X})$ giving
the following semiorthogonal decomposition:
$$\Db(\widetilde{X}) = \langle \Psi^{-d+1}\Db(Y), \ldots, \Psi^{-1} \Db(Y), \ee^* \Db(X) \rangle.$$
Notice that the fully faithful functors $\Psi^j$ are explicitly given by $\Psi^j(-) = \iota_* (p^*(-) \otimes \ko_{D/Y}(j))$.  
\end{proposition}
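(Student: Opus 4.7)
The plan is to exploit the fact that the exceptional divisor $D$ is the projectivization $\PP(N_{Y/X})$ of the normal bundle of $Y$ in $X$, so that $p\colon D\to Y$ is a projective bundle of relative dimension $d-1$. By Proposition \ref{prop-decom-projbund} one then has a semiorthogonal decomposition
$$\Db(D)=\langle p^*\Db(Y)\otimes\ko_{D/Y}(0),\ldots,p^*\Db(Y)\otimes\ko_{D/Y}(d-1)\rangle,$$
and the functors $\Psi^j$ are the restriction of $\iota_*$ to the $j$-th piece. A second key input is that the conormal bundle $N_{D/\tilde X}^\vee$ coincides with $\ko_{D/Y}(1)$, which controls all the base-change computations.

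First I would establish fully faithfulness. For $\epsilon^*$, the blow-up of a smooth centre satisfies $R\epsilon_*\ko_{\tilde X}=\ko_X$, so the projection formula gives $R\epsilon_*\epsilon^*\simeq\id_{\Db(X)}$, hence $\epsilon^*$ is fully faithful by adjunction. For $\Psi^j$, I would use the standard distinguished triangle
$$F\otimes\ko_{D/Y}(1)[1]\lto \iota^*\iota_* F\lto F$$
obtained from the Koszul resolution of $\iota_*\ko_D$ by $\ko_{\tilde X}(-D)\to\ko_{\tilde X}$. Applying $\Hom_D\bigl(p^*G\otimes\ko_{D/Y}(j),-\bigr)$ with $G\in\Db(Y)$ and using the projective bundle decomposition to kill the twist by $\ko_{D/Y}(1)$ (which shifts into the $(j+1)$-th component, still inside the decomposition for $0\le j\le d-2$, and is handled separately for $j=d-1$) reduces $\Hom_{\tilde X}(\Psi^jF,\Psi^jG)$ to $\Hom_Y(F,G)$.

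Next I would verify semiorthogonality by four cross-Hom computations. The vanishing $\Hom_{\tilde X}(\Psi^jF,\epsilon^*G)=0$ follows, after adjoining $\iota^*$ onto $\epsilon^*G=\epsilon^*G$ and projecting along $p$, from $Rp_*\ko_{D/Y}(-j)=0$ for $1\le j\le d-1$ together with the case $j=0$ handled by $R\epsilon_*\iota_*p^*\simeq i_*$ and adjunction. The vanishing $\Hom_{\tilde X}(\Psi^jF,\Psi^kG)=0$ for $j>k$ is obtained similarly: one rewrites it as $\Hom_D(p^*F\otimes\ko_{D/Y}(j),\iota^*\iota_*(p^*G\otimes\ko_{D/Y}(k)))$, expands via the distinguished triangle above, and applies the projective bundle formula on $D$.

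The main obstacle, and where I would spend the bulk of the effort, is generation: proving that the right orthogonal to the listed collection is zero. The cleanest way is to exhibit a resolution of the diagonal $\ko_{\Delta_{\tilde X}}\in\Db(\tilde X\times\tilde X)$ whose terms are external tensor products of objects in $\epsilon^*\Db(X)$ and the $\Psi^j\Db(Y)$; convolving any object $F\in\Db(\tilde X)$ with such a resolution expresses $F$ as an iterated extension of objects in the components. Equivalently, one can argue directly: if $F$ lies in the right orthogonal of $\epsilon^*\Db(X)$ then $R\epsilon_*F=0$, so $F$ is supported on $D$ and is of the form $\iota_* F'$ for some $F'\in\Db(D)$; the remaining orthogonality to all $\Psi^j\Db(Y)$ together with the projective bundle decomposition of $\Db(D)$ forces $F'=0$. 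This reduction is the technical heart of Orlov's proof and is what makes the statement non-formal.
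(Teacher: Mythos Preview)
The paper does not give a proof of this proposition; it is simply quoted from Orlov \cite{orlovprojbund}. Your outline is essentially Orlov's own argument: use the projective-bundle decomposition of $\Db(D)$, the triangle $F\otimes\ko_{D/Y}(1)[1]\to\iota^*\iota_*F\to F$ coming from the Koszul resolution, and then establish generation.

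Two technical points deserve attention. In the semiorthogonality step you write ``adjoining $\iota^*$ onto $\ee^*G$'' to compute $\Hom_{\tilde X}(\iota_*(-),\ee^*G)$. But $\iota^*$ is the \emph{left} adjoint of $\iota_*$; here you need the right adjoint $\iota^{!}(-)=\iota^*(-)\otimes\ko_D(D)[-1]$, and since $\ko_D(D)\simeq\ko_{D/Y}(-1)$ this shifts the twist by one. Getting this right is what makes the range of $j$ line up with the statement (and also explains the apparent mismatch between the direction of vanishing you check and the ordering in the decomposition as stated).

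More substantively, your ``direct'' generation argument has a genuine gap. From $R\ee_*F=0$ one obtains that $F$ is set-theoretically supported on $D$, but it does \emph{not} follow that $F\simeq\iota_*F'$ for some $F'\in\Db(D)$: objects with scheme-theoretic support on a thickening $nD$ are not in the essential image of $\iota_*$. Orlov handles this by a d\'evissage using the filtration of $\ko_{\tilde X}$ by the powers $\ko_{\tilde X}(-kD)$, whose graded pieces $\iota_*\ko_{D/Y}(k)$ produce exactly the required functors $\Psi^j$; this is what actually replaces (and is equivalent to) the resolution-of-the-diagonal approach you mention first. You correctly flag this step as the technical heart, but the shortcut ``$F=\iota_*F'$'' does not work as written.
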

We will refer to Proposition \ref{decomp-blow-up} as the \it Orlov formula \rm for blow ups.

We finally remark that if $X$ has dimension at most 2 and is rational and $\Bbbk=\CC$, the derived
category $\Db(X)$ admits a full exceptional sequence. We have already seen this for $\PP^1$ and $\PP^2$. If $X$ is a
Hirzebruch surface, then it has a 4-objects full exceptional sequence by Prop. \ref{prop-decom-projbund} and the decomposition
of $\PP^1$. We conclude by the birational classification of 
smooth complex projective surfaces and the Orlov formula for blow-ups.
In particular a complex rational surface with Picard number $\rho$ has a full exceptional sequence of $\rho+2$ objects.

Given a semiorthogonal decomposition $\langle \cat{A}_1, \ldots \cat{A}_n \rangle$ of $\cat{T}$, we can define an operation
called \it mutation \rm (called originally, in Russian, \it perestroika\rm) 
which allows to give new semiorthogonal decompositions with equivalent components.
What we need here is the following fact, gathering
different results from \cite{bondal-repr}. 
\begin{proposition}
Suppose that $\cat{T}$ admits a semiorthogonal decomposition $\langle \cat{A}_1,\ldots, \cat{A}_n \rangle$.
Then for each $1 \leq k \leq n-1$, there exists an endofunctor is a semiorthogonal decomposition
$$\cat{T} = \langle \cat{A}_1, \ldots, \cat{A}_{k-1},L_{\cat{A}_k} (\cat{A}_{k+1}), \cat{A}_k, \cat{A}_{k+2},\ldots,\cat{A}_n \rangle,$$
where $L_{\cat{A}_k}: \cat{A}_{k+1} \to L_{\cat{A}_k} (\cat{A}_{k+1})$ is an equivalence, called the \rm left mutation \it
through $\cat{A}_k$. Similarly, for each $2 \leq k \leq n$, there is a semiorthogonal decomposition
$$\cat{T} = \langle \cat{A}_1, \ldots, \cat{A}_{k-2}, \cat{A}_k, R_{\cat{A}_k} (\cat{A}_{k-1}), \cat{A}_{k+1},\ldots,\cat{A}_n \rangle,$$
where $R_{\cat{A}_k}: \cat{A}_{k-1} \to R_{\cat{A}_k} (\cat{A}_{k-1})$ is an equivalence, called the \rm right mutation \it
through $\cat{A}_k$.
\end{proposition}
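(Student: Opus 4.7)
The plan is to reduce the statement to the local two-term case: given a semiorthogonal decomposition $\cat{T}' = \langle \cat{A}, \cat{B} \rangle$ of an admissible subcategory $\cat{T}'$, I want to produce the mutated decomposition $\cat{T}' = \langle L_{\cat{A}}(\cat{B}), \cat{A} \rangle$. Once this is established, the general statement follows by applying it to the admissible subcategory $\cat{T}':= \langle \cat{A}_k, \cat{A}_{k+1} \rangle \subset \cat{T}$: swapping these two components internally does not disturb the semiorthogonality with the remaining $\cat{A}_i$, since the mutated pair spans the same subcategory $\cat{T}'$, and the global filtration of any $T \in \cat{T}$ is simply refined inside its $\cat{T}'$-slice.

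For the two-term case, I use admissibility to pick a right adjoint $i^!: \cat{T}' \to \cat{A}$ of the inclusion $i:\cat{A} \hookrightarrow \cat{T}'$. For each $B \in \cat{B}$, define $L_{\cat{A}}(B)$ by the distinguished triangle
$$ii^! B \to B \to L_{\cat{A}}(B) \to ii^! B[1],$$
in which the first arrow is the counit. Applying $\Hom(iA', -)$ for $A' \in \cat{A}$ and combining fully faithfulness of $i$ with the two identifications $\Hom(iA', ii^! B) \simeq \Hom(A', i^! B) \simeq \Hom(iA', B)$, one sees that the counit induces an isomorphism on these groups in every degree, whence $\Hom(iA', L_{\cat{A}}(B)[j])=0$ for all $j$. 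Thus $L_{\cat{A}}(B) \in \cat{A}^\perp$, which is exactly the semiorthogonality needed so that $L_{\cat{A}}(\cat{B})$ can be placed to the left of $\cat{A}$.

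The next step is to show that $L_{\cat{A}}:\cat{B} \to L_{\cat{A}}(\cat{B})$ is an equivalence. The cleanest route is to observe that $\cat{B}$ and $L_{\cat{A}}(\cat{B})$ are both admissible complements of $\cat{A}$ inside $\cat{T}'$, the former satisfying $\cat{B} \subset {}^\perp \cat{A}$ and the latter $L_{\cat{A}}(\cat{B}) \subset \cat{A}^\perp$, and that the Verdier quotient functor $\cat{T}' \to \cat{T}'/\cat{A}$ restricts to an equivalence on any such complement. The defining triangle shows that $B$ and $L_{\cat{A}}(B)$ become canonically isomorphic in $\cat{T}'/\cat{A}$, so $L_{\cat{A}}$ coincides with the identity on the quotient under these equivalences and is itself an equivalence. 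The decomposition property for $\langle L_{\cat{A}}(\cat{B}), \cat{A}\rangle$ then follows by splicing, for any $T \in \cat{T}'$, the triangle $A \to T \to B \to A[1]$ coming from $\langle \cat{A}, \cat{B}\rangle$ with the mutation triangle for $B$, producing a two-step filtration of $T$ whose graded pieces land in $L_{\cat{A}}(\cat{B})$ and $\cat{A}$ in the prescribed order. The right mutation $R_{\cat{A}_k}$ is obtained by the symmetric construction using the left adjoint $i^*$ of $i$ and the unit $A \to ii^* A$.

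The main obstacle is the verification that $L_{\cat{A}}$ is a genuine equivalence and not just a functor landing in $\cat{A}^\perp$; this is where admissibility of both $\cat{A}$ and $\cat{B}$ is essential, because it guarantees that the Verdier quotient picks out honest complementary subcategories on which the canonical projection is fully faithful. Apart from this point, the proof is a careful but essentially formal manipulation of adjunctions, counits, and distinguished triangles.
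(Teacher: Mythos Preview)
The paper does not actually prove this proposition: it is stated as ``the following fact, gathering different results from \cite{bondal-repr}'' and left without proof. Your argument is the standard construction (essentially the one in Bondal's paper) and is correct in substance.

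Two small points worth tidying. First, in the conventions of Definition~\ref{def-semiortho}, the decomposition $\langle \cat{A}, \cat{B}\rangle$ yields for each $T$ a triangle $B \to T \to A \to B[1]$ with $B\in\cat{B}$, $A\in\cat{A}$, not $A \to T \to B$ as you wrote; your splicing step should be adjusted accordingly (or, more cleanly, simply observe that once $L_{\cat{A}}(\cat{B})=\cat{A}^\perp$ the decomposition $\langle \cat{A}^\perp,\cat{A}\rangle$ comes for free from admissibility of $\cat{A}$). Second, you implicitly treat $L_{\cat{A}}$ as a functor, but cones are not functorial in a general triangulated category; the clean fix is to note that $L_{\cat{A}}$ is the restriction to $\cat{B}$ of the projection $\cat{T}'\to\cat{A}^\perp$ left adjoint to the inclusion, which is an honest functor, and your Verdier-quotient description already makes this identification.
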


Remark in particular that the mutation of an exceptional object is an exceptional object. If $\cat{T}$ is the bounded
derived category of a smooth projective variety and $n=2$, there is a very useful explicit formula for left and right mutations.

\begin{lemma}[\cite{bondal-kapranov}]\label{lem-mut}
Let $X$ be a smooth projective variety and $\Db(X) = \langle \cat{A}, \cat{B} \rangle$ a semiorthogonal decomposition.
Then $L_{\cat{A}}(\cat{B}) = \cat{B} \otimes \omega_X$ and $R_{\cat{B}}({\cat{A}}) = \cat{A} \otimes \omega_X^{-1}$.
\end{lemma}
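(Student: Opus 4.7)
The approach is to identify each mutation with an orthogonal complement and then rewrite that orthogonal as a twist by $\omega_X^{\pm 1}$ via Serre duality. By admissibility of $\cat{A}$ one has a semiorthogonal decomposition $\Db(X) = \langle \cat{A}^{\perp}, \cat{A} \rangle$, where
$$\cat{A}^{\perp} = \{ T \in \Db(X) \mid \Hom_{\Db(X)}(A, T[k]) = 0 \text{ for all } A \in \cat{A}, \ k \in \ZZ \},$$
and by the uniqueness of the complement of $\cat{A}$ in a semiorthogonal decomposition this forces $L_{\cat{A}}(\cat{B}) = \cat{A}^{\perp}$; symmetrically, $R_{\cat{B}}(\cat{A}) = {}^{\perp}\cat{B}$. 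The lemma thus reduces to the two subcategory identities $\cat{A}^{\perp} = \cat{B} \otimes \omega_X$ and ${}^{\perp}\cat{B} = \cat{A} \otimes \omega_X^{-1}$.

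The main input is Serre duality on $X$: with $n = \dim X$, for any $F, G \in \Db(X)$ and $k \in \ZZ$,
$$\Hom_{\Db(X)}(F, G[k]) \simeq \Hom_{\Db(X)}(G, F \otimes \omega_X[n-k])^{\vee}.$$
For the first identity, take $F = B \in \cat{B}$ and $G = A \in \cat{A}$: the vanishing $\Hom(B, A[k]) = 0$ for every $k$, which is given by the semiorthogonality of $\langle \cat{A}, \cat{B} \rangle$, translates into $\Hom(A, B \otimes \omega_X[m]) = 0$ for every $m$, yielding $\cat{B} \otimes \omega_X \subseteq \cat{A}^{\perp}$. For the reverse inclusion, take $T \in \cat{A}^{\perp}$: Serre duality gives
$$\Hom_{\Db(X)}(T \otimes \omega_X^{-1}, A[k]) \simeq \Hom_{\Db(X)}(A, T[n-k])^{\vee} = 0$$
for all $A \in \cat{A}$ and $k \in \ZZ$, so $T \otimes \omega_X^{-1} \in {}^{\perp}\cat{A} = \cat{B}$ and thus $T \in \cat{B} \otimes \omega_X$. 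The right mutation identity is proved in exactly the same way, exchanging the roles of $\cat{A}$ and $\cat{B}$ and replacing $\omega_X$ by $\omega_X^{-1}$.

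There is no genuine obstacle here beyond book-keeping: once Serre duality and the identification of the mutation $L_{\cat{A}}(\cat{B})$ with the orthogonal complement $\cat{A}^{\perp}$ are accepted, the argument is a purely mechanical translation. The one place demanding care is the orientation: $L_{\cat{A}}(\cat{B})$ lies on the \emph{left} of $\cat{A}$ in the mutated decomposition, so it is the \emph{right} orthogonal $\cat{A}^{\perp}$ rather than the left orthogonal ${}^{\perp}\cat{A} = \cat{B}$, and it is precisely this side-swap that is responsible for the appearance of an $\omega_X$-twist rather than no twist at all.
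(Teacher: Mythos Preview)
Your argument is correct: identifying $L_{\cat{A}}(\cat{B})$ with the right orthogonal $\cat{A}^{\perp}$ and then using Serre duality to rewrite $\cat{A}^{\perp}$ as $\cat{B}\otimes\omega_X$ is exactly the standard route, and your orientation check at the end is the right sanity test. Note, however, that the paper does not give its own proof of this lemma at all --- it is simply quoted from \cite{bondal-kapranov} --- so there is no in-paper argument to compare against; your proof is essentially the one found in that reference.
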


\subsection{Fourier--Mukai functors, motives and Chow groups}\label{subs-FMandMotives}

Fourier--Mukai functors are the main tool in studying derived categories of coherent sheaves. We recall
here the main properties of a Fourier--Mukai functor and how it interacts with other theories, such
as the Grothendieck group, Chow rings and motives. A more detailed treatment (except for motives, see
\cite{orlov-motiv}) can be found in \cite[Chap. 5]{huybrechts}.

Let $X$ and $Y$ be smooth projective varieties of dimension $n$ and $m$ respectively
and $\ke$ an object of $\Db(X \times Y)$. 
The \it Fourier--Mukai functor $\Phi_{\ke}: \Db(Y) \to \Db(X)$ with kernel \rm $\ke$ is given by
$\Phi_{\ke}(A) = p_*(q^* A \otimes \ke),$
where $p$ and $q$ denote the projections form $X \times Y$ onto $X$ and $Y$ respectively. We will sometimes 
drop the subscript $\ke$. If $Z$ is a smooth projective variety, $\Phi_{\ke} : \Db(Y) \to \Db(X)$ and
$\Phi_{\kf}: \Db(X) \to \Db(Z)$, then the composition $\Phi_{\kf} \circ \Phi_{\ke}$ is the Fourier--Mukai
functor with kernel 
\begin{equation}\label{compoformula}
\kg := p_{13*}(p_{12}^* \ke \otimes p_{23}^* \kf),
\end{equation}
where $p_{ij}$ are the projections from $Y \times X \times Z$ onto products of two factors. It is worth noting
the similarity between (\ref{compoformula}) and the composition of correspondences (\ref{comp-of-corr}).

A Fourier--Mukai functor $\Phi_{\ke}$ always admits a left and right adjoint which are the Fourier--Mukai
functors with kernel $\ke_L$ and $\ke_R$ resp., defined by
$$\begin{array}{lcr}
\ke_L := \ke^{\vee} \otimes p^* \omega_X [n] & {\text{and}} & \ke_R := \ke^{\vee} \otimes q^* \omega_Y [m].
\end{array}$$
A celebrated result from Orlov \cite{orlov-represent} shows that any fully faithful exact functor $F: \Db(Y) \to
\Db(X)$ with right and left adjoint is a Fourier--Mukai functor whose kernel is uniquely determined up to isomorphism.

Given the Fourier--Mukai functor
$\Phi_{\ke}: \Db(Y) \to \Db(X)$,
consider the element $[\ke]$ in $K_0(X \times Y)$, given by the alternate sum of the cohomologies of
$\ke$. Then we have a commutative diagram
\begin{equation}\label{comm-diag-db-k0}
\xymatrix{
\Db(Y) \ar[r]^{\Phi_{\ke}} \ar[d]_{[\phantom{a}]} & \Db(X) \ar[d]^{[\phantom{a}]} \\
K_0(Y) \ar[r]^{\Phi^K_{\ke}} & K_0(X),
}
\end{equation}
where $\Phi^K_{\ke}$ is the $K$-theoretical Fourier--Mukai transform defined by
$\Phi^K_{\ke}(A) = p_!(q^* A \otimes [\ke])$. If $\Phi_{\ke}$ is fully faithful,
we have $\Phi_{\ke_R} \circ \Phi_{\ke} =
\id_{\Db(Y)}$. This implies $\Phi^K_{\ke_R} \circ \Phi^K_{\ke} = \id_{K_0(Y)}$ and then $K_0(Y)$ is
a direct summand of $K_0(X)$.
\begin{lemma}\label{lemma-semiorth-and-K-groups}
Let $X$, $\{Y_i\}_{i=1,\ldots k}$ be smooth projective varieties, $\Phi_i: \Db(Y_i) \to \Db(X)$ fully
faithful functors and $\Db(X) = \langle \Phi_1 \Db(Y_1),\ldots,\Phi_k \Db(Y_k) \rangle$ a
semiorthogonal decomposition. Then $K_0 (X) = \bigoplus_{i=1}^k K_0 (Y_i)$, and there is
an isomorphism of $\QQ$-vector spaces
$CH_{\QQ}^*(X) \cong \bigoplus_{i=1}^k CH_{\QQ}^*(Y_i)$.
\end{lemma}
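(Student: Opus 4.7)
My plan is to lift the semiorthogonal decomposition first to $K_0(X)$ via the commutative diagram (\ref{comm-diag-db-k0}), and then to $CH^*_\QQ(X)$ via the motivic refinement of \cite{orlov-motiv}. The whole argument is formal once one accepts that Fourier--Mukai functors induce morphisms on both invariants that compose correctly.

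For the setup, each fully faithful $\Phi_i$ admits both adjoints (by the admissibility built into Definition \ref{def-semiortho}), so by Orlov's representability theorem it is a Fourier--Mukai functor with kernel $\ke_i \in \Db(Y_i \times X)$, and its adjoints $\Phi_{i,R}$, $\Phi_{i,L}$ are Fourier--Mukai with kernels obtained by dualization and twisting. Diagram (\ref{comm-diag-db-k0}) then produces $K$-theoretic transforms $\Phi_i^K : K_0(Y_i) \to K_0(X)$ together with their adjoints, and the relation $\Phi_i^K \circ \Phi_{i,R}^K = \id$ realises $K_0(Y_i)$ as a direct summand of $K_0(X)$. To see that the images generate, I would take any $T \in \Db(X)$ and apply the filtration $0 = T_k \to \ldots \to T_0 = T$ from Definition \ref{def-semiortho}: passing to $K_0$ gives $[T] = \sum_i [C_i]$ with $C_i = \mathrm{cone}(T_i \to T_{i-1}) \in \Phi_i \Db(Y_i)$, so $[C_i] \in \im \Phi_i^K$. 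Directness of the sum follows because semiorthogonality yields $\Phi_{j,R}^K \circ \Phi_i^K = 0$ for $i>j$ (and similarly with left adjoints for $i<j$), producing orthogonal idempotents on $K_0(X)$ whose sum is the identity.

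For the Chow decomposition I would invoke \cite{orlov-motiv}, which attaches to each Fourier--Mukai kernel $\ke_i$ a correspondence inducing a morphism of Chow motives $h(Y_i) \to h(X)$, and proves that the composition formula (\ref{compoformula}) for kernels matches the composition of correspondences (\ref{comp-of-corr}). With this in hand, the argument of the previous paragraph transfers verbatim to the category of motives, producing a direct sum decomposition of $h(X)$ into pieces isomorphic to the motives $h(Y_i)$, possibly shifted by Tate twists. Applying $\bigoplus_d \Hom(\QQ(-d),-)$, which is insensitive to such twists on the level of the ungraded total Chow group, then yields $CH^*_\QQ(X) = \bigoplus_i CH^*_\QQ(Y_i)$. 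The main obstacle, and the reason for invoking \cite{orlov-motiv}, is precisely the compatibility of Fourier--Mukai composition with composition of Chow correspondences; the remaining steps (generation, splitness, orthogonality) are purely formal once that is granted.
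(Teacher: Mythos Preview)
Your argument for $K_0$ matches the paper's: both use Orlov's representability to obtain Fourier--Mukai kernels, then pass to $K$-theory via the adjunction to exhibit each $K_0(Y_i)$ as a retract, and invoke the filtration of Definition~\ref{def-semiortho} for generation. One small correction: semiorthogonality gives $\Phi_{j,R}^K \circ \Phi_i^K = 0$ for $j>i$, not $i>j$ as you wrote. This one-sided vanishing still suffices for directness (peel off the top component by applying $\Phi_{k,R}^K$, then induct), but the naive projections $\Phi_i^K \circ \Phi_{i,R}^K$ are not literally a system of \emph{orthogonal} idempotents; only their triangular relations hold.

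For $CH^*_\QQ$ your route diverges from the paper's. You pass through the motivic machinery of \cite{orlov-motiv} to decompose $h(X)$ and then read off Chow groups via $\bigoplus_d \Hom(\QQ(-d),-)$. The paper instead deduces the Chow statement in one line from the $K_0$ decomposition by Grothendieck--Riemann--Roch, using that the Chern character gives an isomorphism $K_0(X)\otimes\QQ \cong CH^*_\QQ(X)$. Your approach is correct and in fact establishes the stronger motivic splitting, but for the lemma as stated the paper's direct appeal to GRR is shorter and sidesteps the delicacy of handling mixed-degree correspondences.
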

\begin{proof}
The full and faithful functors $\Phi_i : \Db(Y_i) \to \Db(X)$ have to be of Fourier--Mukai type
and then $K_0(Y_i)$ are direct summands of $K_0(X)$.
The generation follows from the definition of a semiorthogonal decomposition.
The isomorphism between Chow rings as vector spaces is a straightforward consequence of Grothendieck--Riemann--Roch
Theorem.
\end{proof}

Consider the element $e:= \ch([\ke]). \Td(X)$ in $CH_{\QQ}^*(X \times Y)$.
This gives a correspondence
$e: CH_{\QQ}^*(Y) \to CH_{\QQ}^*(X)$ and we have a commutative diagram

\begin{equation}\label{comm-diag-db-ch}
\xymatrix{
\Db(Y) \ar[r]^{\Phi_{\ke}} \ar[d] & \Db(X) \ar[d] \\
CH_{\QQ}^*(Y) \ar[r]^e & CH_{\QQ}^*(X),
}
\end{equation}
where the vertical arrows are obtained by taking the Chern character and multiplying with the Todd class.
The commutativity of the diagram follows from the Grothendieck--Riemann--Roch formula. Remark that here we used that
the relative Todd class of the projection $X \times Y \to X$ is $q^*\Td(Y)$.

As for the Grothendieck groups, the Chow ring and the rational cohomology (see \cite[Chapt. 5]{huybrechts}),
one can find a functorial correspondence between derived Fourier--Mukai functors and motivic maps. This
was first carried out by Orlov \cite{orlov-motiv}.
Indeed, the cycle $e$ is of mixed type in $CH_{\QQ}^*(X \times Y)$. Its components $e_i$ in $CH_{\QQ}^i(X \times Y)$ give motivic
maps $e_i: h(Y) \to h(X)(i-m)$. Denote by $\kf:=\ke_R$ the kernel of the right adjoint of $\Phi_{\ke}$, and 
$f = \ch([\kf]).\Td(Y)$  the associated cycle in $CH_{\QQ}^*(X \times Y)$. Then we get motivic maps $f_i: h(X) \to h(Y)(i-n)$,
that is $f_i: h(X)(n-i) \to h(Y)$. In particular, $f_{i}.e_{m+n-i}: h(Y) \to h(Y)$.
If we consider the cycles $e$ and $f$, the Grothendieck--Riemann--Roch formula
implies that $f.e= \oplus_{i=0}^{m+n} f_i .e_{m+n-i}$ induces the identity $\id: h(Y) \to h(Y)$.

\begin{example}\label{ex-fmofcurves}
As an example, we describe the result in \cite{marcellocurves} from the motivic point of view. This turns out
to be very useful in understanding the relationship between the derived category, the motive and
the Jacobian of a smooth projective curve, and contains some ideas that we will use in the proof
of Theorem \ref{main-theorem1}

Let $C_1$ and $C_2$ be smooth projective curves and $\Phi_{\ke}: \Db(C_1) \to \Db(C_2)$ a Fourier--Mukai functor.
In \cite{marcellocurves} it is shown that the map
$\phi: J(C_1) \to J(C_2)$ induced by $\Phi_{\ke}$ preserves the principal polarization if and only if $\Phi_{\ke}$ is
fully faithful, which is equivalent to ask that $\Phi_{\ke}$ is
an equivalence.

We could describe such result in the following way: consider the motivic maps $e_i: h(C_1) \to h(C_2)(i-1)$ where
$e$ is the cycle associated to $\ke$. We define $f$ as before via the right adjoint. If $\Phi_{\ke}$ is fully faithful,
then we have $f.e = \oplus_{i=0}^2 f_i. e_{2-i} = \id$.
Recall that a map $h^1(C) \to h^1(C)$ factoring through $\QQ(-d)$ is trivial for any integer $d$.
Moreover,  $h^0(C_j) = \QQ$, and $h^2(C_j) = \QQ(-1)$ for $j=1,2$. If we restrict $f.e$ to $h^1(C_1)$, we
get that $(f_i.e_{2-i})_{\vert h^1(C_1)} = 0$ unless
$i=1$, because for $i \neq 1$ this map factors through some $\QQ(-d)$.
In particular  \cite[2.2]{marcellocurves} we obtain that $(e_1.f_1)_{\vert h^1(C_1)}=\id_{h^1(C_1)}$ and then $h^1(C_1)$ is a
direct summand of $h^1(C_2)$. Applying the same argument to the adjoint
of $\Phi_{\ke}$, one obtains an isomorphism
$h^1(C_1) \simeq h^1(C_2)$. This gives an isogeny $J(C_1) \otimes \QQ \simeq J(C_2) \otimes \QQ$.

Moreover, the maps $e_1$ and $f_1$ are given both by $c_1([\ke])$, and they define a morphism $\phi: J(C_1) \to J(C_2)$
of Abelian varieties, with finite kernel. 
The key point to prove the preservation of the principal polarization is the fact that 
that the dual map $\hat{\phi}$ of $\phi$ is induced by
the adjoint of $\Phi_{\ke}$ (see \cite{marcellocurves}).
Being $\Phi_{\ke}$ a Fourier--Mukai functor carries indeed a deep amount of geometrical information.
\end{example}

\section{Derived categories, motives and Chow groups of conic bundles}\label{sec-basic-on-conic}

From now on, we only consider varieties defined over $\CC$.
Let $S$ be a smooth projective surface, and $\pi: X \to S$ a smooth standard conic bundle. By this, we mean
a flat projective surjective morphism whose 
scheme theoretic fibers are isomorphic to plane conics, such that for any irreducible curve $D \subset S$ the surface $\pi^{-1}(D)$ 
is irreducible (this second condition is also called relative minimality). The discriminant locus of the conic bundle 
is a curve $C \subset S$, which can be possibly empty, with at most double points \cite[Prop. 1.2]{beauvillejaco}.
The fiber of $\pi$ over a smooth point of $C$ is the union of two lines intersecting in a single point,
while the fiber over a node is a double line.
Recall that any conic bundle is birationally
equivalent to a standard one via elementary transformations \cite{sarkisov-bira}.

In this section, we recall known results about the geometry of $\pi: X \to S$. In section \ref{subs-deco} we 
deal with the decomposition of $h(X)$ described by Nagel and Saito \cite{nagel-saito} and with
the semiorthogonal decomposition of $\Db(X)$ described by Kuznetsov \cite{kuznetconicbundles}. In section \ref{subs-arno} we
recall the description of the intermediate Jacobian and the algebraically trivial part $A^2(X)$
of the Chow group given by \cite{beauvillejaco,beltrachow}. The order of the two sections reverses history, but
the decompositions of $h(X)$ and $\Db(X)$ hold in a more general framework.  

Before that, recall that to any standard conic bundle, we can associate an admissible double covering $\tilde{C} \to C$
of the curve $C$, that is $\tilde{C}$ has nodes exactly over the nodes of $C$ and the has degree 2 and
ramifies exactly over the nodes of $C$ (this is called a \it pseudo-rev\^etement \rm in \cite[D\'ef. 0.3.1]{beauvillejaco}.
The set of vertical lines of $X$ (that is, the ones
contained in a fiber) is then a $\PP^1$-bundle over $\tilde{C}$ \cite{beauvillejaco}. In the results recalled
here, if $C$ is not smooth, then it has to be replaced by its normalization and the corresponding double covering. Anyway, with no
risk of misunderstanding, we will tacitly assume this replacement when needed, and keep the notation $\tilde{C} \to C$.

\subsection{The decompositions of $h(X)$ and $\Db(X)$}\label{subs-deco}

Consider the rational Chow motive $h(X)$. Nagel and Saito \cite{nagel-saito} provide a relative Chow-K\"unneth
decomposition for $h(X)$. First of all, for a given double covering $\tilde{C} \to C$ of an irreducible curve with at
most double points, they define the \it Prym motive \rm $\Prym(\tilde{C}/C) := (\tilde{C},(1-\tau)/2,0)$
as a submotive of $h(\tilde{C})$, where $\tau$ is
the involution associated to the covering. In particular $\Prym^1(\tilde{C}/C)$ is
a submotive of $h^1(\tilde{C})$, $\Prym(\tilde{C}/C) = \Prym^1(\tilde{C}/C)$ if the double covering
is not trivial and $\Prym(\tilde{C}/C) = h(C)$ otherwise. We refrain here to give the details
of the construction, for which the reader can consult \cite{nagel-saito}. 
Moreover they show how $h(S)$ and $h(S)(-1)$ are direct summands of $h(X)$. Any conic
bundle (non necessarily standard) is uniruled and $h(X) = \oplus_{i=0}^6 h^i(X)$ is the Chow--K\"unneth decomposition
\cite{angel-mstach}. We have the following description:
$$
h^i(X) = h^i(S) \oplus h^{i-2}(S)(-1) \oplus \bigoplus_{j=1}^r\Prym^{i-2} (\tilde{C_j}/C_j)(-1),
$$
where $C_j$, for $j=1, \ldots r$, are the irreducible components of the discriminant curve $C$ \cite{nagel-saito}.

If $\pi:X \to S$ is standard, then there is no component of $C$ over which the double cover is trivial. It follows that
$h^i(X) = h^i(S) \oplus h^{i-2}(S)(-1)$ for $i \neq 3$ and 
$$h^3(X) = h^3(S) \oplus h^1(S)(-1) \oplus \bigoplus_{j=1}^r \Prym^1(\tilde{C_j}/C_j)(-1).$$

We will focus on the case where $S$ is a rational surface and $C$ is connected (in any other case, the
conic bundle is not rational). We finally end up, recalling section \ref{intromotiv}, with:
\begin{equation}\label{motivic-decomp}
\begin{array}{rl}
h^i(X) &= h^i(S) \oplus h^{i-2}(S)(-1) \,\,\,\,\,\, \text{if} \,\, i \neq 3,\\
h^3(X) &= \Prym^1(\tilde{C}/C)(-1),
\end{array}
\end{equation}
and in particular, for $i \neq 3$, $h^i(X)$ is either trivial or a finite sum of Tate motives.
\medskip
\begin{table}[h]
\centering
\begin{tabular}{c|c|c|c|c|c|c}

\noalign{\smallskip}

$h^0(X)$ & $h^1(X)$ & $h^2(X)$ & $h^3(X)$ & $h^4(X)$ & $h^5(X)$ & $h^6(X)$ \\\hline

$\QQ$ & $0$ & $\QQ^{\rho+1}(-1)$ & $\Prym^1(\tilde{C}/C)(-1)$ & $\QQ^{\rho+1}(-2)$ & $0$ & $\QQ(-3)$\\   

\end{tabular}
\medskip\caption{\label{table}\small The motive of a standard conic bundle $X$ with discriminant double cover $\tilde{C} \to C$
over a rational surface $S$ of Picard number $\rho$}
\end{table}
 
Consider the derived category $\Db(X)$. The fibers
of $\pi$ are plane conics and that there is a locally free rank 3 vector bundle $\ke$ on $S$,
such that $X \subset \PP(\ke)$ is the zero locus of a section $s: \ko_S(-1) \to Sym^2(\ke^*)$ and the
map $\pi$ is the restriction of the fibration $\PP(\ke) \to S$ \cite{beauvillejaco,kuznetconicbundles}.
Kuznetsov defines, in the more general framework of
any quadric fibration over any smooth projective manifold, the sheaves of even (resp. odd) parts $\kb_0$ (resp. $\kb_1$)
of the Clifford algebra $\kb$ associated to the section $s$.
One can consider the Abelian category $\coh(S,\kb_0)$ of
coherent sheaves with a structure of $\kb_0$-algebra and its bounded derived category
$\Db(S,\kb_0)$. In the case of a conic bundle, both $\kb_0 = \ko_S \oplus (\Lambda^2(\ke) \otimes \ko_S(-1))$,
and $\kb_1 = \ke \oplus \det(\ke) (-1)$ are locally free sheaves of rank 4. 

\begin{proposition}[\cite{kuznetconicbundles}]\label{decom-conic-bd}
Let $\pi: X \to S$ be a conic bundle and $\kb_0$ the sheaf of even parts of the Clifford algebra associated to it.
Then $\pi^*: \Db(S) \to \Db(X)$ is fully faithful and there is a fully faithful functor $\Phi: \Db(S,\kb_0) \to
\Db(X)$ such that
$$\Db(X) = \langle \Phi \Db(S,\kb_0), \pi^* \Db(S) \rangle.$$
\end{proposition}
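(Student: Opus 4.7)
The plan is to exploit the embedding $i: X \hookrightarrow \PP(\ke)$ as a divisor of relative degree two, and to deduce the decomposition of $\Db(X)$ from the Orlov projective bundle formula (Proposition \ref{prop-decom-projbund}) for $p: \PP(\ke) \to S$ by restricting to $X$. The three pieces $p^*\Db(S) \otimes \ko_{\PP/S}(j)$ for $j=0,1,2$ will, upon restriction, collapse to the two pieces $\pi^*\Db(S)$ and $\Phi\Db(S,\kb_0)$, with the collapse controlled precisely by the Clifford relation attached to $s$.

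First I would show that $\pi^*$ is fully faithful. By the projection formula this reduces to $\pi_*\ko_X = \ko_S$, which holds because each fiber of $\pi$ is a (possibly singular) plane conic, hence a curve of arithmetic genus zero with $H^0(\ko) = \CC$ and $H^1(\ko) = 0$; cohomology and base change then yields the claim. Admissibility of $\pi^*\Db(S)$ follows from the existence of the adjoints $\pi_*$ and $\pi^!$, both available since $\pi$ is proper and lci.

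The construction of $\Phi$ is the heart of the matter. Writing $\kb_0 = \ko_S \oplus (\Lambda^2\ke \otimes \ko_S(-1))$ as a locally free sheaf of rank four and endowing it with the $\ko_S$-algebra structure determined by $s$, I would introduce a distinguished sheaf $\ke_0$ on $X$, locally free of rank two over $\ko_X$, carrying a canonical left $\pi^{-1}\kb_0$-action. Such $\ke_0$ should appear as the cokernel of a Clifford-type map between twists of $\ko_{\PP(\ke)}(-1)$ and $\ko_{\PP(\ke)}$ tensored with the graded pieces of $\kb_0$, and is precisely the geometric incarnation of the relation $v \cdot v = s(v)$. The functor is then defined by $\Phi(\kf) := \pi^*\kf \otimes_{\pi^{-1}\kb_0} \ke_0$, which is of Fourier--Mukai type with kernel $\ke_0$ suitably viewed on $S \times X$.

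The main obstacle is the simultaneous verification of fully faithfulness of $\Phi$ and right-orthogonality of its image with $\pi^*\Db(S)$. Fully faithfulness reduces, by adjunction, to the identity $\pi_* \sHom_{\kb_0}(\ke_0,\ke_0) = \kb_0$, a fiberwise statement that amounts to the classical fact that the even Clifford algebra of a rank-three quadratic form acts faithfully on its unique (up to grading) irreducible module. Semiorthogonality requires $\pi_* \ke_0$ to vanish when paired with pullbacks from $S$, which follows by pushing forward the Koszul-type resolution of $\ke_0$ term by term and using that $p_*\ko_{\PP/S}(-1) = p_*\ko_{\PP/S}(-2) = 0$. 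Finally, generation is obtained by lifting any object of $\Db(X)$ to $\Db(\PP(\ke))$ via $i_*$, decomposing it via the three-term Orlov formula, and observing that the first piece restricts into $\pi^*\Db(S)$ while the last two pieces reassemble, through the $\kb_0$-module identification, into $\Phi\Db(S,\kb_0)$. The real technical work is setting up the sheaf-of-algebras formalism cleanly enough that all these verifications reduce to standard fiberwise computations about Clifford modules of rank-three quadratic forms.
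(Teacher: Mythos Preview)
The paper gives no proof of this proposition: it is simply quoted from \cite{kuznetconicbundles}, so there is nothing in the paper to compare your argument against. Your sketch is in fact a reasonable outline of Kuznetsov's own strategy in that reference --- embed $X$ in the projective bundle $\PP(\ke)$, use Orlov's decomposition of $\Db(\PP(\ke))$, and identify the left orthogonal of $\pi^*\Db(S)$ with $\Db(S,\kb_0)$ via a Clifford-module kernel.

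Two small points where your sketch is thin. First, the semiorthogonality direction in $\langle \Phi\Db(S,\kb_0), \pi^*\Db(S)\rangle$ means $\Hom(\pi^*F,\Phi G)=0$, i.e.\ $\pi_*\Phi G = 0$ for every $\kb_0$-module $G$; you phrase this as ``$\pi_*\ke_0$ vanishes when paired with pullbacks,'' which is the right idea but should be stated for the whole image of $\Phi$, not just for the kernel sheaf. Second, the generation step via $i_*$ and the three-term Orlov formula is delicate: pushing forward by $i_*$ and then decomposing does not literally give back a decomposition on $X$, and in Kuznetsov's argument the identification of ${}^\perp(\pi^*\Db(S))$ with $\Db(S,\kb_0)$ goes through a more careful analysis (his ``homological projective duality'' machinery, or at least an explicit right adjoint to $\Phi$). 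As written, your last paragraph gestures at the correct mechanism but would need the adjoint-functor framework made explicit to actually close the argument.
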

We will refer to Proposition \ref{decom-conic-bd} as the \it Kuznetsov formula \rm for conic bundles.
Remark that Kuznetsov actually gives a similar semiorthogonal decomposition for any quadric fibration over any smooth
projective manifold. If in particular $S$ is a smooth complex rational surface with Picard number $\rho$,
its derived category admits a full exceptional sequence. It follows that

$$\Db(X) = \langle \Phi \Db(S,\kb_0), \cat{E} \rangle,$$

where $\cat{E}=<E_1, \ldots, E_{\rho+2}>$, the the pull back of the full exceptional sequence of $\Db(S)$.

We conclude this section by showing that, if $S$ is rational, the Clifford algebra of a standard conic bundle, and hence
the dervied category $\Db(S,\kb_0)$, are completely determined by the discriminant double cover. This will be very useful
in Sections \ref{sect-plane} and \ref{sect-hirze}, where we will complete the proof of Theorem \ref{main-theorem2}
by giving an example of each possible discriminant double cover. 

\begin{lemma}\label{lem-square-equiv}
Let $S$ be a smooth rational simply connected surface and $\pi: X \to S$, and $\pi':X'\to S$ be standard conic bundles with
associated sheaves of even parts of Clifford algebras $\kb_0$ and $\kb_0'$ respectively. If $X$ and
$X'$ have the same discriminant double cover $\tilde{C} \to C$, then $\kb_0$ is isomorphic to $\kb_0'$.
In particular, $\Db(S,\kb_0)$ and $\Db(S,\kb_0')$ are equivalent.
\end{lemma}
\begin{proof}
Let we denote by $K(S)$ the function field of $S$, and by $\Br(-)$ the
Brauer group. A quaternion algebra $\ka_{\eta}$ is an element of order 2 of $\Br(K(S))$. There is
an exact sequence \cite{artin-mumford}:
$$0 \longrightarrow \Br(S) \longrightarrow \Br(K(S)) \stackrel{\alpha}{\longrightarrow}
\bigoplus_{D \subset S} H^1_{et} (D, \QQ/\ZZ) \stackrel{\beta}{\longrightarrow} \bigoplus_{x \in S} \mu^{-1},$$
where in the third (resp. fourth) term the sum runs over curves $D$ contained in (resp. points $x$ of) $S$.
Recall that all elements of order two in $\Br(K(S))$ are quaternion algebras \cite{merkurev}.

In particular, the algebra $\kb_0$ defines over $K(S)$ a quaternion algebra, determined
up to an element of $\Br(S)$.
If $S$ is rational and simply connected,
then $\Br(S)=0$, and the map $\alpha$ is injective. In this case, we have a 1-1 correspondence
between quaternion algebras $\ka_{\eta}$ and standard conic bundles, as explained
in \cite{sarkisov-struct, iskovconicduke}.
\end{proof}
\begin{remark}
A similar argument was first developed by Panin (\cite{panin} page 450-51)
in the case of conic bundles on $\PP^2$ with a quintic discriminant curve. 
\end{remark}

\subsection{Algebraically trivial cycles on $X$ and Prym varieties}\label{subs-arno}

Given a curve $C$ with at most double points and an admissible double covering $\nu:\tilde{C} \to C$ one
can define the \it Prym variety \rm $P(\tilde{C}/C)$ as the connected component containing $0$ of
the kernel of the norm map $Nm: J(\tilde{C}) \to  J(C)$, sending a degree 0 divisor $D$ on $\tilde{C}$ to the degree 0 divisor $\nu_*D$ on $C$. Remark that here we are abusing of notations in the case where $C$ (and hence $\tilde{C}$) are singular:
in this case we have to replace them with their normalizations and the induced double cover, which
we denote still by $\tilde{C} \to C$ by abuse of notations.
The Prym variety is a principally polarized Abelian subvariety of $J(\tilde{C})$ of index 2 (\cite{mumfordprym, beauvilleprym}).
%For any $d$ integer and any variety $Y$, 
%we denote by $A^d(Y)$ the group of algebraically trivial cycles in $CH^d(Y)$, %and by $A^d_{\QQ}(Y):= A^d(Y) \otimes \QQ$.

Let $\pi: X \to S$ be a standard conic bundle with associated double covering $\tilde{C} \to C$.
If $S=\PP^2$, Beauville showed that the intermediate Jacobian $J(X)$ is
isomorphic as a principally polarized Abelian variety to $P(\tilde{C}/C)$ \cite{beauvillejaco}.
Moreover, he shows that the algebraically trivial part $A^2(X)$ of $CH^2(X)$ 
is isomorphic to the Prym variety $P(\tilde{C}/C)$. The key geometric point is that the family of vertical lines
(that is, lines contained in a fiber of $\pi$) in $X$ is a $\PP^1$-bundle over the curve $\tilde{C}$.
There is then a surjective morphism $g: J(\tilde{C}) \to A^2(X)$ extending the map associating to a point
$c$ of $\tilde{C}$ the line $l_c$ over it. The isomorphism $\xi : P(\tilde{C}/C) \to A^2(X)$ is obtained by 
taking the quotient via $\ker(g)$. The inverse isomorphism $G=\xi^{-1}$ is a regular map making of $P(\tilde{C}/C)$ the
algebraic representative of $A^2(X)$ (for more details, see \cite[Ch. III]{beauvillejaco}).
Similar techniques prove the same results for any $S$ rational \cite{beltra-francia,beltrachow}.

\begin{definition}[\cite{beauvillejaco}, D\'ef 3.4.2]\label{def-inc-pol}
Let $Y$ be a smooth projective variety of odd dimension $2n+1$ and $A$ (an Abelian
variety) the algebraic representative of $A^{n+1}(Y)$ via the canonical map
$G:A^{n+1}(Y) \to A$. A polarization of $A$ with class $\theta_A$
in $\corr(A,A)$, is the \it incidence polarization \rm with respect to $Y$ if
for all algebraic maps $f: T \to A^{n+1}(Y)$ defined by a cycle $z$
in $CH^{n+1}(Y \times T)$, we have
$$(G \circ f)^* \theta_A = (-1)^{n+1} I(z),$$
where $I(z)=z.z$ in $\corr(T,T)$ is the composition of the correspondences $z \in \corr(T,Y)$ and $z \in \corr(Y,T)$.
\end{definition}

\begin{proposition}\label{prop-incid-polar}
Let $\pi: X \to S$ be a standard conic bundle over a smooth rational surface.
The principal polarization $\Theta_P$ of $P(\tilde{C}/C)$ is the incidence polarization with respect to $X$.
\end{proposition}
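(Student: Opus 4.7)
The plan is to verify the defining identity of the incidence polarization on the universal family of vertical lines and then reduce arbitrary families to this case via the surjection $g: J(\tilde{C}) \to A^2(X)$ recalled in Section~\ref{subs-arno}.

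First I would make explicit the isomorphism $\xi: P(\tilde{C}/C) \to A^2(X)$. The family of lines contained in fibers of $\pi$ forms a $\PP^1$-bundle $p: L \to \tilde{C}$ together with a tautological morphism $\alpha: L \to X$, and these combine into a universal cycle $z := (p,\alpha)_*[L] \in CH^2(\tilde{C} \times X)$. The induced morphism $f_z: \tilde{C} \to A^2(X)$ extends to a surjection $g: J(\tilde{C}) \to A^2(X)$; since the sum $l_c + l_{\iota(c)}$ is rationally equivalent to any smooth fiber of $\pi$, $g$ factors through $P(\tilde{C}/C)$, yielding $\xi$ with inverse $G$.

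Next I would compute the self-correspondence $I(z) \in \corr(\tilde{C})$. Its support is the set of pairs $(c,c')$ with $l_c \cap l_{c'} \neq \emptyset$, which consists of the diagonal $\Delta_{\tilde{C}}$ and the graph $\Gamma_{\iota}$ of the covering involution, since two distinct vertical lines meet only at the singular point of a degenerate fiber. A local intersection multiplicity computation, as carried out in \cite[Ch.~III]{beauvillejaco} for $S = \PP^2$ and applicable verbatim here because the local picture of the conic fibration is independent of $S$, shows that, together with the fact that the Prym polarization is induced by the restriction of $\Theta_{J(\tilde{C})}$ divided by two, the identity $(G \circ f_z)^* \Theta_P = (-1)^{n+1} I(z)$ holds for $n = 1$.

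Finally, to extend the identity to an arbitrary cycle $w \in CH^2(X \times T)$, I would exploit the surjectivity of $g$: the composition $G \circ f_w: T \to P(\tilde{C}/C)$ lifts, up to isogeny, to a morphism $T \to J(\tilde{C})$ induced by Abel--Jacobi from some cycle on $\tilde{C} \times T$, and the naturality of the incidence pairing reduces the identity for $w$ to the one already verified. The extension from $S = \PP^2$ to an arbitrary smooth rational surface is immediate, the only role played by rationality being the guarantee that no irreducible component of $C$ is split by the \'etale double cover, so that $\tilde{C} \to C$ is genuinely non-trivial on each component. The main technical obstacle is the careful tracking of signs and of the factor $2$ relating the polarizations on $J(\tilde{C})$ and $P(\tilde{C}/C)$; this bookkeeping is precisely what is spelled out in \cite{beauvillejaco} and \cite{beltrachow}, to which the proof ultimately reduces.
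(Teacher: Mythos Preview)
Your outline and the paper's proof both ultimately defer to \cite{beauvillejaco} and the extension in \cite{beltra-francia,beltrachow}, and your first two steps (describing $\xi$ via the family of vertical lines, and computing $I(z)$ for that universal family) are exactly the computations carried out in \cite[Ch.~III]{beauvillejaco}. The difference lies in the reduction step, and there your argument has a gap.

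The identity $(G\circ f_w)^*\theta_P = -I(w)$ must be checked for \emph{every} cycle $w\in CH^2(X\times T)$, and the right–hand side depends a priori on $w$, not just on the map $f_w$. Your proposed reduction lifts $G\circ f_w$ to a morphism $h:T\to J(\tilde C)$ represented by some divisor $d$ on $\tilde C\times T$; composing $d$ with the universal cycle $z$ yields a new cycle $w'\in CH^2(X\times T)$ inducing the same map $f_w$. One can indeed compute $I(w')=d^t\circ I(z)\circ d$ and relate it to $h^*\theta_{J(\tilde C)}$, but nothing in what you wrote explains why $I(w)=I(w')$. This is precisely the point that requires work, and ``naturality of the incidence pairing'' does not supply it.

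The paper (following Beauville's Prop.~3.3 and 3.5) avoids this problem by working not over $\tilde C$ but over $P(\tilde C/C)$ itself: the key input, obtained for general rational $S$ from the diagram on \cite[p.~83]{beltra-francia}, is that $2\xi$ is represented by a cycle $y\in CH^2(X\times P(\tilde C/C))$. Then for arbitrary $(T,w)$ one sets $u=G\circ f_w$ and observes that $(\,\id_X,u)^*y$ represents $2f_w$; Beauville's argument in Prop.~3.5 then compares $I(w)$ directly with $u^*I(y)$. The existence of this universal cycle $y$ on $X\times P$ is the ingredient you are missing, and it is what makes the reduction go through.

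A minor point: your remark that rationality of $S$ is what guarantees non-triviality of the double cover on each component of $C$ is not correct; that follows from the conic bundle being standard. The rationality hypothesis in the Proposition is there because the references establishing $A^2(X)\simeq P(\tilde C/C)$ and the cycle $y$ are written in that setting.
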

\begin{proof}
We prove the statement in the case where $C$ is smooth. In the case of nodal curves, one has to go through the
normalization, and this is just rewriting the proof of \cite[Thm. 3.6, (iii)]{beauvillejaco}. 

If $S=\PP^2$, this is \cite[Prop. 3.5]{beauvillejaco}. If $S$ is not $\PP^2$, consider the isomorphism
$\xi$. The proof of \cite[Prop. 3.3]{beauvillejaco}
can be rephrased in this setting, in particular, recalling the diagram in \cite[Pag. 83]{beltra-francia}, one
can check that the map $2 \xi$ is described by a cycle $y$ in $CH^2(X \times P(\tilde{C}/C))$.
Let $f: T \to A^2(X)$ be an algebraic map defined by a cycle $z$ in $CH^2(X \times T)$. Denoting
by $u := G \circ f$ and $u':= (\id_X,u)$, the map $2f$ is defined by the cycle $(u')^* y$.
The proof is now the same as the one of \cite[Prop. 3.5]{beauvillejaco}.
\end{proof}

\section{Reconstructing the intermediate Jacobian}\label{sect-criterion}

The first main result of this paper is the reconstruction of $J(X)$ as the direct
sum of Jacobians of smooth projective curves, starting from a semiorthogonal decomposition
of $\Db(S,\kb_0)$. This Section is entirely dedicated to the proof of Theorem \ref{main-theorem1}.

\begin{blank}
Let $\pi: X \to S$ be a standard conic bundle over a rational surface. Suppose that $\{\Gamma_i\}_{i=1}^k$
are smooth projective curves and $k\geq 0$, with fully faithful functors $\Psi_i: \Db(\Gamma_i) \to \Db(S,\kb_0)$
for $i=1,\ldots k$, such that $\Db(S,\kb_0)$ admits an exceptional sequence $(E_1, \ldots, E_l)$ and a semiorthogonal decomposition:
\begin{equation}\label{decompo-in-main-thm2}
\Db(S,\kb_0) = \langle \Psi_1 \Db(\Gamma_1), \ldots, \Psi_k \Db(\Gamma_k), \cat{E} \rangle.
\end{equation}
Then $J(X) = \bigoplus_{i=1}^k J (\Gamma_i)$ as principally polarized Abelian variety.
\end{blank}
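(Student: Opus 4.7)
The plan is to lift the given decomposition of $\Db(S,\kb_0)$ to $\Db(X)$, translate it into a motivic decomposition via Orlov's induced maps on Chow motives, identify the Jacobian piece of $h(X)$ as the direct sum of the $h^1(\Gamma_i)$, and finally upgrade the resulting isogeny to an isomorphism of principally polarized abelian varieties.

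First, combining the hypothesis (\ref{decompo-in-main-thm}) with the Kuznetsov formula (Proposition~\ref{decom-conic-bd}) and the full exceptional collection available on $\Db(S)$ (which exists since $S$ is rational), we obtain
\[
\Db(X) = \langle \Phi\Psi_1\Db(\Gamma_1),\ldots,\Phi\Psi_k\Db(\Gamma_k), F_1,\ldots,F_m\rangle
\]
with $F_j$ exceptional. By Orlov's representability theorem, each $\Phi_i := \Phi\circ\Psi_i$ is Fourier--Mukai with some kernel $\ke_i$; set $e^{(i)} := \ch([\ke_i])\cdot\Td(\Gamma_i)$ and $f^{(i)}$ analogously from the right-adjoint kernel, so that $f^{(i)}\cdot e^{(i)} = \id_{h(\Gamma_i)}$ in the motivic category, while $f^{(j)}\cdot e^{(i)} = 0$ for $j > i$ by semiorthogonality. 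By the Nagel--Saito decomposition (\ref{motivic-decomp}) and the rationality of $S$, the summand $h^j(X)$ is a direct sum of twisted Tate motives for $j\neq 3$, while $h^3(X) = \Prym^1(\tilde{C}/C)(-1)$. Since $\Hom(h^1(\Gamma),\QQ(-d)) = 0$ for any smooth projective curve, the motivic map $e^{(i)}|_{h^1(\Gamma_i)}$ necessarily factors through $h^3(X)$, and likewise for $f^{(i)}$; combined with $f^{(i)}\cdot e^{(i)} = \id$, this realizes each $h^1(\Gamma_i)$ as a direct summand of $\Prym^1(\tilde{C}/C)(-1)$. The semi-orthogonality, together with the rational Chow-group splitting of Lemma~\ref{lemma-semiorth-and-K-groups} (the exceptional $F_j$ contribute only Tate summands), then forces these $k$ summands to assemble into an isomorphism
\[
\bigoplus_{i=1}^k h^1(\Gamma_i) \isomor \Prym^1(\tilde{C}/C)(-1).
\]
Because the motivic maps are represented by genuine cycles $c_1([\ke_i])$, this isomorphism is induced by a morphism of abelian varieties $\phi = \sum\phi_i : \bigoplus_i J(\Gamma_i) \to P(\tilde{C}/C) = J(X)$, which is therefore an isogeny.

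To promote $\phi$ to an isomorphism of principally polarized abelian varieties, I invoke Proposition~\ref{prop-incid-polar}: the principal polarization $\Theta_P$ on $P(\tilde{C}/C)$ is the incidence polarization relative to $X$. Hence $\phi_i^*\Theta_P = I(z_i)$ with $z_i = c_1([\ke_i])$. As in Example~\ref{ex-fmofcurves}, the right-adjoint Fourier--Mukai kernel $\ke_{i,R}$ induces the dual morphism $\hat\phi_i$, so $I(z_i)$ corresponds to $\hat\phi_i\circ\phi_i$, which by $f^{(i)}\cdot e^{(i)} = \id$ restricted to $h^1(\Gamma_i)$ equals the principal polarization of $J(\Gamma_i)$; the orthogonality relations kill the cross-terms for $i\neq j$. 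Therefore $\phi^*\Theta_P$ is the product of principal polarizations on $\bigoplus_i J(\Gamma_i)$, and $\phi$ is the desired isomorphism.

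The main obstacle is this last step: converting the motivic identity $f^{(i)}\cdot e^{(i)} = \id$ into a polarization-preservation statement requires that the Fourier--Mukai kernel $\ke_i$ itself -- and not merely its Chern character -- controls the polarization, via its right adjoint realizing the dual morphism of abelian varieties. This is precisely where the derived-categorical rigidity of FM kernels, beyond their action on motives, is essential, and it is the geometric reason why the criterion in Theorem~\ref{main-theorem1} is sharp.
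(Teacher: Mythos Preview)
Your strategy matches the paper's: lift the decomposition to $\Db(X)$, pass to Chow motives via the Fourier--Mukai kernels, use rationality of $S$ together with the Nagel--Saito decomposition to force each $h^1(\Gamma_i)$ into $h^3(X)=\Prym^1(\tilde C/C)(-1)$, and then invoke the incidence polarization (Proposition~\ref{prop-incid-polar}). Two points need correction. First, the cycle representing $\phi_i$ is the codimension-two component $\ch_2([\ke_i])$, not $c_1([\ke_i])$: since $\dim X=3$, it is $e^{(i)}_2\in CH^2_\QQ(\Gamma_i\times X)$ that carries $h^1(\Gamma_i)$ into $h^3(X)$ (compare the explicit Grothendieck--Riemann--Roch computation after Lemma~\ref{injective-isogeny}). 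You have imported the curve-to-curve formula of Example~\ref{ex-fmofcurves} one dimension too low.

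Second, and more substantively, the assertion that semiorthogonality together with Lemma~\ref{lemma-semiorth-and-K-groups} ``forces'' the isomorphism $\bigoplus_i h^1(\Gamma_i)\simeq \Prym^1(\tilde C/C)(-1)$ skips the surjectivity argument. Semiorthogonality yields only a one-sided triangular relation $f^{(j)}e^{(i)}=0$, so the $h^1(\Gamma_i)$ embed as a direct summand; to see they exhaust $\Prym^1$, the paper argues that the cokernel of $\psi_\QQ$ inside $A^2_\QQ(X)$ is at once a finite-dimensional $\QQ$-vector space (since under the Chow splitting everything outside $\bigoplus_i\Pic^0_\QQ(\Gamma_i)$ contributes only finitely many classes to $CH^2_\QQ(X)$) and the rationalization of an abelian variety, hence trivial. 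Without this step you do not have an isogeny, and your polarization argument cannot conclude. Relatedly, identifying the incidence self-correspondence $I(z_i)$ with the principal polarization of $J(\Gamma_i)$ is not literally the equation $f^{(i)}e^{(i)}=\id$: one must check, as in Remark~\ref{functoriality-and-variation}, that the right-adjoint kernel $\ke_i^\vee\otimes q^*\omega_X[3]$ has the same $\ch_2$ up to sign, so that $I(\ch_2(\ke_i))=-\id$ on $h^1(\Gamma_i)$. That is the precise mechanism behind the ``derived-categorical rigidity'' you invoke in your last paragraph.
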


If $S$ is minimal, we obtain the ``if'' part of Theorem \ref{main-theorem2} 
combining the reconstruction of the Jacobian of Theorem \ref{main-theorem1}
with Shokurov's rationality criterion \cite[Thm. 10.1]{shokuprym}.

\begin{corollary}\label{ratio-criterion}
If $\pi: X \to S$ is a standard conic bundle over a minimal rational surface and
$$\Db(S,\kb_0) = \langle \Psi_1 \Db(\Gamma_1), \ldots, \Psi_k \Db(\Gamma_k), \cat{E} \rangle,$$
where $\Gamma_i$ are smooth projective curves, $\Psi_i: \Db(\Gamma_i) \to \Db(S,\kb_0)$ are full and faithful functors, and
$\cat{E}$ is generated by exceptional objects, then $X$ is rational and $J(X) = \bigoplus_{i=1}^k J (\Gamma_i)$.
\end{corollary}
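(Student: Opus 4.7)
The plan is to leverage Kuznetsov's formula (Proposition \ref{decom-conic-bd}) to pull the given decomposition of $\Db(S,\kb_0)$ back to a full semiorthogonal decomposition of $\Db(X)$, and then translate everything into the motivic world via Orlov's construction (subsection \ref{subs-FMandMotives}). Since $S$ is rational, $\Db(S)$ has a full exceptional sequence, so one obtains
\[ \Db(X) = \langle \Phi_1\Db(\Gamma_1), \ldots, \Phi_k\Db(\Gamma_k), E'_1, \ldots, E'_m \rangle, \]
with $\Phi_i := \Phi \circ \Psi_i$ and the $E'_j$ gathering the $\pi^*$-pull-backs of an exceptional collection on $S$ together with the $E_i$. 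By Orlov representability each $\Phi_i$ is a Fourier--Mukai functor with kernel $\ke_i$ on $\Gamma_i \times X$, and the associated cycle $e_i = \ch([\ke_i]).\td(\Gamma_i)$ decomposes into motivic maps $e_{i,j}\colon h(\Gamma_i) \to h(X)(j-1)$.

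The decisive observation is the decomposition \eqref{motivic-decomp}: for $i\neq 3$, $h^i(X)$ is a finite sum of Tate motives, whereas $h^3(X) = \Prym^1(\tilde C/C)(-1)$ is the unique non-Tate summand. Since $h^1(\Gamma_i)$ admits no non-trivial motivic maps to or from Tate motives (subsection \ref{intromotiv}), the only piece of the motivic map induced by $\Phi_i$ that can be non-trivial on $h^1(\Gamma_i)$ lands in $h^3(X)$. Exactly as in Example \ref{ex-fmofcurves}, the Grothendieck--Riemann--Roch identity $f_i.e_i = \id_{h(\Gamma_i)}$, restricted to $h^1(\Gamma_i)$, yields an injective motivic map $h^1(\Gamma_i) \hookrightarrow h^3(X)(1) = \Prym^1(\tilde C/C)$ that splits $h^1(\Gamma_i)$ off as a direct summand. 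Via the equivalence between $h^1$-motives and Jacobians up to isogeny, this produces an isogeny $\psi_i\colon J(\Gamma_i) \to P(\tilde C/C) = J(X)$ onto an abelian subvariety.

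Exceptional objects, as well as any $\Gamma_i$ of genus zero (whose derived categories split into two exceptionals by Beilinson), contribute only to the Tate part of $h(X)$, and the semiorthogonality of the decomposition of $\Db(X)$ translates, through the commutativity of \eqref{comm-diag-db-ch}, into orthogonality of the motivic images $h^1(\Gamma_i) \hookrightarrow h^3(X)(1)$. Counting ranks then forces $\bigoplus_i h^1(\Gamma_i) \cong \Prym^1(\tilde C/C)$, so that $\psi := \bigoplus_i \psi_i\colon \bigoplus_i J(\Gamma_i) \to J(X)$ is a surjective isogeny with finite kernel.

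The main obstacle is upgrading this isogeny to an isomorphism of \emph{principally polarized} abelian varieties; here we invoke Proposition \ref{prop-incid-polar}, which identifies $\Theta_{P(\tilde C/C)}$ with the incidence polarization with respect to $X$. Each $\psi_i$ is induced on $CH^2$ by $c_1([\ke_i])$, while its dual $\hat\psi_i$ is induced by the right-adjoint kernel $\ke_{i,R} = \ke_i^\vee \otimes p^*\omega_X[\dim X]$, exactly in the spirit of Example \ref{ex-fmofcurves}. Applying Definition \ref{def-inc-pol} to the algebraic map $J(\Gamma_i) \to A^2(X)$ attached to $c_1([\ke_i])$, the incidence polarization pulled back by $\psi_i$ is computed as the self-composition $I(c_1(\ke_i))$, which by Grothendieck--Riemann--Roch and the identity $f_i.e_i = \id$ coincides (up to the sign in the definition) with the canonical principal polarization of $J(\Gamma_i)$; the semiorthogonality in $\Db(X)$ forces the incidence pairing between the images $\psi_i(J(\Gamma_i))$ and $\psi_j(J(\Gamma_j))$ for $i\neq j$ to vanish. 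Hence $\psi$ preserves the principal polarizations and, being a surjective isogeny between principally polarized abelian varieties of the same polarization type, is an isomorphism.
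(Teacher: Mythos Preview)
Your proposal addresses only half of the statement: you argue for the isomorphism $J(X)\cong\bigoplus_i J(\Gamma_i)$ of principally polarized abelian varieties, but you say nothing about the rationality of $X$. The corollary has two conclusions, and the entire point of the word ``minimal'' in the hypothesis is that it enables the bridge from the Jacobian splitting to rationality. That bridge is Shokurov's theorem \cite[Thm.~10.1]{shokuprym}: for a standard conic bundle over a \emph{minimal} rational surface, $X$ is rational if and only if $J(X)$ is isomorphic (as a principally polarized abelian variety) to a direct sum of Jacobians of smooth curves. The paper's proof of the corollary is therefore a one-liner: apply Theorem~\ref{main-theorem1} to obtain the Jacobian splitting, then invoke Shokurov to conclude rationality. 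Without Shokurov's result your argument yields no rationality statement at all, and there is no alternative route to rationality hidden in the motivic or categorical machinery you use.

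Separately, your reconstruction of the Jacobian splitting largely parallels the proof of Theorem~\ref{main-theorem1}, which is already at your disposal, so most of the work you do is unnecessary for the corollary. A couple of points in that reconstruction are also imprecise: the cycle inducing $\psi_i$ is the degree-two Chern character $\ch_2(\ke_i)$, not $c_1([\ke_i])$ (cf.\ the computation following Lemma~\ref{injective-isogeny}); and the claim that ``semiorthogonality in $\Db(X)$ translates into orthogonality of the motivic images $h^1(\Gamma_i)\hookrightarrow h^3(X)(1)$'' needs justification---the paper avoids this by a different surjectivity argument (the cokernel of $\psi$ is an abelian variety which is simultaneously a finite-dimensional $\QQ$-vector space, hence trivial). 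These issues are fixable, but the missing invocation of Shokurov is the essential gap.
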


If we have the decomposition (\ref{decompo-in-main-thm2}), using Prop. \ref{decom-conic-bd} and
that $S$ is rational of Picard number $\rho$, we get
\begin{equation}\label{decomp-tot}
\Db(X) = \langle \Psi_1 \Db(\Gamma_1), \ldots, \Psi_k \Db(\Gamma_k), E_1, \ldots, E_r \rangle,
\end{equation}
where $E_i$ are exceptional objects, $r=l+\rho+2>0$, and we denote by $\Psi_i$, by abuse of notation, the
composition of the full and faithful functor $\Psi_i$ with the full and faithful functor $\Db(S,\kb_0) \to \Db(X)$.
Remark that we can suppose that $\Gamma_i$ has positive genus for all $i=1,\ldots,k$. Indeed, the derived
category of the projective line admits a semiorthogonal decomposition by two exceptional objects. Then
if there exists an $i$ such that $\Gamma_i \simeq \PP^1$, it is enough to perform some mutation to get
a semiorthogonal decomposition like (\ref{decomp-tot}) with $g(\Gamma_i) > 0$ for all $i$ (recall we do not exclude the
case $k=0$).
By Lemma \ref{lemma-semiorth-and-K-groups} we have an isomorphism of $\QQ$-vector spaces:
\begin{equation}\label{orth-decomp-chow}
CH_{\QQ}^*(X) = \bigoplus_{i=1}^k CH_{\QQ}^*(\Gamma_i) \oplus \QQ^r,
\end{equation}
where we used the fact that the category generated by a single exceptional object is equivalent to the derived category
of a point, and $CH_{\QQ}^*(pt) = \QQ$. We are interested in understanding how the decomposition (\ref{orth-decomp-chow})
projects onto the codimension 2 cycle group $CH_{\QQ}^2(X)$ and in particular onto the algebraically
trivial part.

The proof is in two parts: first if $\Psi: \Db(\Gamma) \to \Db(X)$ is fully faithful and $\Gamma$
has positive genus, we get that $J(\Gamma)$ is isomorphic to a principally polarized Abelian
subvariety of $J(X)$ (Prop. \ref{prop-interm}). This is essentially based on constructions from
\cite{orlov-motiv} and results from \cite{beauvillejaco}. In the second part, starting
from the semiorthogonal decomposition we deduce the required isomorphism.

\begin{lemma}\label{injective-isogeny}
Let $\Gamma$ be a smooth projective curve of positive genus. Suppose there is a fully faithful functor
$\Psi: \Db(\Gamma) \to \Db(X)$. Then $J(\Gamma)$ is isogenous to an Abelian subvariety of $J(X) \simeq P(\tilde{C}/C)$.
\end{lemma}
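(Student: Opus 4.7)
The plan is to adapt the motivic method of Example \ref{ex-fmofcurves} to the present setting, exploiting the fact that, except for $h^3(X) = \Prym^1(\tilde C/C)(-1)$, every summand of the Chow--K\"unneth decomposition (\ref{motivic-decomp}) of $h(X)$ is of Tate type.

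By Orlov's representability theorem, $\Psi$ is of Fourier--Mukai type with some kernel $\ke \in \Db(X\times\Gamma)$, and its right adjoint is given by the kernel $\ke_R$. Following Section \ref{subs-FMandMotives}, the cycles $e := \ch([\ke])\cdot\Td(\Gamma)$ and $f := \ch([\ke_R])\cdot\Td(X)$ decompose by codimension into motivic maps $e_i : h(\Gamma) \to h(X)(i-3)$ and $f_j : h(X) \to h(\Gamma)(j-1)$, and the relation $\Psi_R\circ\Psi \cong \id_{\Db(\Gamma)}$ translates, via Grothendieck--Riemann--Roch, into a motivic identity of the form $\sum_{i+j=4} f_j(i-3)\circ e_i = \id_{h(\Gamma)}$.

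The next step is to restrict this identity to $h^1(\Gamma)\subset h(\Gamma)$ and to decompose the composition through the summands of $h(X)(i-3)$. Using the vanishings $\Hom(h^1(\Gamma),\QQ(-d))=0$ for $d\neq 0$ and $\Hom(\QQ(-d),h^1(\Gamma))=0$ for $d\neq 1$ recalled in Section \ref{intromotiv}, every contribution factoring through a Tate summand of $h^k(X)$ with $k\neq 3$ vanishes on $h^1(\Gamma)$. The surviving terms therefore factor through $h^3(X)(i-3) = \Prym^1(\tilde C/C)(i-4)$, and a further twist comparison (using that $\Hom(h^1(\Gamma),h^1(\tilde C)(m))=0$ for $m\neq 0$, and similarly for the dual) singles out the pair $(i,j)=(4,0)$. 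This produces motivic maps $\alpha:h^1(\Gamma)\to\Prym^1(\tilde C/C)$ and $\beta:\Prym^1(\tilde C/C)\to h^1(\Gamma)$ with $\beta\circ\alpha=\id_{h^1(\Gamma)}$, exhibiting $h^1(\Gamma)$ as a direct summand of $\Prym^1(\tilde C/C)$.

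By the equivalence between the subcategory of motives generated by direct summands of $h^1$'s of smooth projective curves and the category of abelian varieties up to isogeny (Section \ref{intromotiv}), this splitting at the motivic level yields an isogeny of $J(\Gamma)$ onto an abelian subvariety of $P(\tilde C/C)$, which by Beauville's theorem recalled in Section \ref{subs-arno} is isomorphic to $J(X)$. The main obstacle I expect to be the careful bookkeeping of Tate twists needed to isolate a single surviving pair $(i,j)$ and to land in $\Prym^1(\tilde C/C)$ with no residual twist; once this combinatorial reduction has been carried out, the remaining assertion is the formal dictionary between $h^1$-motives and abelian varieties up to isogeny.
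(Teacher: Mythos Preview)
Your strategy is exactly the paper's: pass to motives via the Fourier--Mukai kernel and its adjoint, restrict the resulting identity on $h(\Gamma)$ to $h^1(\Gamma)$, and use the Chow--K\"unneth decomposition (\ref{motivic-decomp}) together with the rationality of $S$ to kill every contribution through a Tate summand, leaving $h^1(\Gamma)$ as a direct summand of the Prym motive.

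The one slip is in the twist bookkeeping. The shift in Section \ref{subs-FMandMotives} should be by the dimension of the \emph{source}, i.e.\ $e_i: h(Y) \to h(X)(i-m)$ with $m=\dim Y$; so here $e_i: h(\Gamma) \to h(X)(i-1)$ and, dually, $f_j: h(X) \to h(\Gamma)(j-3)$. With this normalisation the surviving pair is $(i,j)=(2,2)$, not $(4,0)$: then $e_2$ lands in $h(X)(1)$, whose $h^3$-component is $\Prym^1(\tilde C/C)(-1)(1)=\Prym^1(\tilde C/C)$ with no residual twist, and the relevant cycle is $\ch_2(\ke)\in CH^2_{\QQ}(X\times\Gamma)$ --- precisely the class the paper uses immediately afterwards to write down the morphism $\psi:J(\Gamma)\to J(X)$. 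Your choice $(4,0)$ would make the surviving class a $0$-cycle on the $4$-fold $X\times\Gamma$, which cannot act nontrivially on $h^1(\Gamma)$. Apart from this index correction, your argument and the paper's coincide.
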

\begin{proof}
Let $\ke$ be the kernel of the fully faithful functor $\Psi: \Db(\Gamma) \to \Db(X)$,
and $\kf$ the kernel of its right adjoint.
If we consider the cycles $e$ and $f$ described in Section \ref{subs-FMandMotives}, the Grothendieck--Riemann--Roch formula
implies that $f.e$ induces the identity $\id: h(\Gamma) \to h(\Gamma)$. If $e_i$ and $f_i$ are the $i$-th codimensional
components of $e$ resp. of $f$ in $CH_{\QQ}^*(X \times \Gamma)$, then $f.e = \oplus f_i . e_{4-i}$.
Remark that $e_i$ gives a map $h(\Gamma) \to h(X) (i-1)$. 
Recall that the motive $h(X)$ is decomposed, as in Table \ref{table}, by Tate motives and the Prym motive.
If we restrict to $h^1(\Gamma)$, recalling that a map $h^1(\Gamma) \to h^1(\Gamma)$ factoring through
a Tate motive is trivial, we get $(f_i.e_{4-i})_{\vert h^1(\Gamma)} = 0$ for all $i \neq 2$.
This implies that $\id_{h^1(\Gamma)} = (f_2 . e_2)_{\vert h^1(\Gamma)}$, and then that
$h^1(\Gamma)$ is a direct summand of $h(X)(1)$ and in particular it is a direct summand of $\Prym^1(\tilde{C}/C)$, which
proves the claim.
\end{proof}

Remark that we can describe explicitly the map $\psi_{\QQ}: J_{\QQ}(\Gamma) \to J_{\QQ}(X)$ induced by $\Psi$,
following the ideas in \cite{marcellocurves}. Indeed the map $\psi_{\QQ}$
is given by $e_2$, the codimension 2 component of the cycle associated to the kernel $\ke$.
Then $\psi_{\QQ}$ can be calculated just applying the Grothendieck--Riemann--Roch Theorem.

Let $p: \Gamma \times X \to
X$ and $q: \Gamma \times X \to \Gamma$ be the two projections. For $M$ in $J(\Gamma)$ (which we identify with
$\Pic^0(\Gamma)$) we calculate
the second Chern character $(\ch(\Psi(M))_2$ to get an element of $J(X)$ (which we identify with $A^2(X)$).
Applying Grothendieck--Riemann--Roch and using multiplicativity of Chern characters, we have the following:
$$(\ch(p_*(q^*M \otimes \ke)))_2 = p_*(\ch(q^*M).\ch(e).(1-(1/2) q^* K_{\Gamma}))_3,$$
since the relative dimension of $p$ is 1 and the relative Todd class is $1-(1/2) q^* K_{\Gamma}$.
Recalling that $\ch(q^*M) = 1 + q^*M$ and $q^*M . q^* K_{\Gamma} = 0$, we get
$$(\ch(p_*(q^*M \otimes \ke)))_2 = p_*(q^*M. \ch_2(\ke) - (1/2) q^*K_{\Gamma}.\ch_2(\ke) + \ch_3(\ke)).$$
It is clear that this formula just defines an affine map $\Psi^{CH}: CH_{\QQ}^1 (\Gamma) \to CH_{\QQ}^2(X)$ of $\QQ$-vector
spaces. In order to get the isogeny $\psi_{\QQ}$, we have to linearize and restrict to $J(\Gamma) \otimes \QQ$, to get finally:
$$\begin{array}{rl}
\psi_{\QQ} : J(\Gamma) \otimes \QQ &\longrightarrow J(X) \otimes \QQ \\
M & \mapsto p_*(q^*M. \ch_2(\ke))   
  \end{array}
$$
Now that we have the cycle describing the map $\psi_{\QQ}$, we obtain a unique morphism $\psi: J(\Gamma) \to J(X)$,
whose kernel can only be torsion. That is, we have an isogeny $\psi$ between $J(\Gamma)$ and an Abelian subvariety of
$J(X)$.

\begin{remark}\label{functoriality-and-variation}
Arguing as in \cite[Sect. 2.3]{marcellocurves}, we can show that the correspondence between $\Psi$ and $\psi$ is
functorial. Moreover, the functor with kernel $\ke[n]$ induces the map $(-1)^n\psi$. The functor with kernel
$\ke^{\vee}$ induces the map $\psi$. Given line bundles $L$ and $L'$ on $\Gamma$ and $X$ respectively,
the functor with kernel $\ke \otimes p^*L \otimes q^*L'$ induces the map $\psi$. The adjoint functor
of $\Psi$ is a Fourier--Mukai functor whose kernel is $\ke^{\vee} \otimes q^* \omega_X [3]$. Its composition
with $\Psi$ gives the identity of $\Db(\Gamma)$. The motivic map $f_2: \Prym^1(\tilde{C}/C)(-1) \to h^1(\Gamma)$ is
then given by the cycle $-\ch_2(e)$.
Then, by functoriality and (\ref{compoformula}), the cycle $I(\ch_2(e))$, as defined in Def. \ref{def-inc-pol},
is $-\id$ in $\corr(J(\Gamma),J(\Gamma))$. 
\end{remark}

Recall that, by \cite[Sec. 3]{beauvillejaco}, \cite{beltra-francia,beltrachow} and Proposition \ref{prop-incid-polar},
$P(\tilde{C}/C)$ is the algebraic representative of $A^2(X)$ and the principal polarization
$\Theta_{P}$ of $P(\tilde{C}/C)$ is the incidence polarization with respect to $X$.
In particular, we have an isomorphism $\xi : P(\tilde{C}/C) \to A^2(X)$ whose inverse $G$ makes the Prym
variety the algebraic representative of $A^2(X)$. Moreover, if $f:T \to A^2(X)$ is an algebraic map
defined by a cycle $z$ in $CH_{\QQ}^2(X \times T)$, then, according to Definition \ref{def-inc-pol}, we have
$$
(G \circ f)^* \theta_P = I(z).
$$
The map $\psi$ is defined by the cycle $\ch_2(e)$ in $CH_{\QQ}^2(X \times \Gamma)$.
Following Remark \ref{functoriality-and-variation}, the cycle $I(\ch_2(e))$ in $CH_{\QQ}^1(\Gamma \times \Gamma)$
gives the correspondence $-\id$, that is
$$(G \circ \psi)^* \theta_P = -\id.$$ 
Now going through the proof of \cite[Prop. 3.3]{beauvillejaco}, it is clear that
$$\psi^* \theta_{J(X)} = \id,$$
where $\theta_{J(X)}$ is the class of principal polarization of $J(X)$. Hence we get an injective morphism $\psi: J(\Gamma) \to
J(X)$ preserving the principal polarization. We can state the following result.

\begin{proposition}\label{prop-interm}
Let $\pi: X \to S$ be a standard conic bundle over a rational surface. Suppose that there is a smooth projective curve
$\Gamma$ of positive genus and a fully faithful functor $\Psi: \Db(\Gamma) \to \Db(S,\kb_0)$. Then there is an
injective morphism $\psi: J(\Gamma) \to J(X)$ of Abelian varieties, preserving the principal polarization.
\end{proposition}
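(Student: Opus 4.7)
The plan is to lift the derived-categorical hypothesis to a polarization-preserving embedding of Jacobians in three stages: motivic, abelian, and polarization. First, I would pass to Fourier--Mukai kernels: compose $\Psi$ with the Kuznetsov embedding $\Phi: \Db(S,\kb_0) \to \Db(X)$ of Proposition~\ref{decom-conic-bd} to obtain a fully faithful functor $\Db(\Gamma) \to \Db(X)$, represented by a kernel $\ke \in \Db(X \times \Gamma)$ via Orlov's representability theorem. Following the formalism of \ref{subs-FMandMotives}, the cycle $e := \ch([\ke]).\Td(\Gamma)$ decomposes into components $e_i \in CH_{\QQ}^i(X \times \Gamma)$ yielding motivic maps $e_i : h(\Gamma) \to h(X)(i-3)$, and the right adjoint produces companion cycles $f_i$ with $\sum f_i \cdot e_{4-i} = \id_{h(\Gamma)}$.

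Second, I would restrict this identity to the summand $h^1(\Gamma)$. Because $S$ is rational, $h^1(S)=h^3(S)=0$, so the Nagel--Saito decomposition (\ref{motivic-decomp}) implies that every summand of $h(X)(i-3)$ is a sum of Tate motives except for $i=2$, where one picks up the factor $\Prym^1(\tilde{C}/C)(-1)$ coming from $h^3(X)$. Since $\Hom(h^1(\Gamma),\QQ(-d))=0$ for every $d$, only the $i=2$ contribution survives on $h^1(\Gamma)$, and $(f_2 \cdot e_2)|_{h^1(\Gamma)} = \id_{h^1(\Gamma)}$. This is exactly Lemma~\ref{injective-isogeny}: $h^1(\Gamma)$ sits as a direct summand of $\Prym^1(\tilde{C}/C)(-1)$, giving an isogeny of $J(\Gamma)$ onto an abelian subvariety of $J(X)\simeq P(\tilde{C}/C)$.

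Third, I would promote this to a genuine morphism of abelian varieties. A Grothendieck--Riemann--Roch computation (of the type spelled out immediately after Lemma~\ref{injective-isogeny}) identifies the induced map on $J_{\QQ}(\Gamma)$ as the one defined by the codimension-two component $\ch_2(\ke) \in CH_{\QQ}^2(X \times \Gamma)$. This correspondence descends to a homomorphism $\psi: J(\Gamma) \to J(X)$ of abelian varieties with at most finite kernel.

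Finally, for the polarization statement, I would invoke Proposition~\ref{prop-incid-polar}, which identifies $\Theta_P$ with the incidence polarization on $P(\tilde{C}/C)\simeq A^2(X)$. Viewing $\psi$ as an algebraic map into $A^2(X)$ defined by the cycle $\ch_2(\ke)$ and applying Definition~\ref{def-inc-pol} gives $(G \circ \psi)^* \Theta_P = I(\ch_2(\ke))$; by the adjoint/shift computation recorded in Remark~\ref{functoriality-and-variation} the right-hand side equals $-\id$ in $\corr(J(\Gamma))$. Rerunning the argument of Beauville's \cite[Prop.~3.3]{beauvillejaco}, transported through the isomorphism $\xi$ as in the proof of Proposition~\ref{prop-incid-polar}, converts this into $\psi^* \Theta_{J(X)} = \id_{J(\Gamma)}$. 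Injectivity is then automatic, since a homomorphism pulling back a principal polarization to a principal polarization has trivial kernel. The hard part, I expect, is precisely this last step: the sign and normalization bookkeeping. One must reconcile the Serre-duality shift $[\dim X]$ in the right-adjoint kernel $\ke_R$, the $(-1)^{n+1}$ sign appearing in the definition of the incidence polarization, and the factor of $2$ entering the cycle representative of $\xi$ noted in the proof of Proposition~\ref{prop-incid-polar}, so that the motivic identity $f_2 \cdot e_2 = \id_{h^1(\Gamma)}$ translates into honest polarization preservation rather than a sign or a multiple. By contrast, the motivic isogeny part of the argument (Lemma~\ref{injective-isogeny}) is a clean consequence of Orlov's formalism and the rationality of $S$.
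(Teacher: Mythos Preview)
Your proposal is correct and follows essentially the same route as the paper: pass to a Fourier--Mukai kernel, run the motivic argument of Lemma~\ref{injective-isogeny} to split off $h^1(\Gamma)$ (using that for rational $S$ all other summands of $h(X)$ are Tate), extract the explicit cycle $\ch_2(\ke)$ via GRR, and then combine Proposition~\ref{prop-incid-polar}, Remark~\ref{functoriality-and-variation}, and Beauville's \cite[Prop.~3.3]{beauvillejaco} to conclude $\psi^*\theta_{J(X)}=\id$. One small wording fix: it is not that $\Hom(h^1(\Gamma),\QQ(-d))=0$ for all $d$, but rather that no nonzero endomorphism of $h^1(\Gamma)$ factors through a Tate motive (as recalled in Section~\ref{intromotiv}); this is what actually kills the $i\neq 2$ terms.
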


Consider the projection $pr: CH_{\QQ}^*(X) \to CH_{\QQ}^2(X)$. The decomposition (\ref{orth-decomp-chow})
as a $\QQ$-vector space is rewritten as:
$$CH_{\QQ}^*(X) = \bigoplus_{i=0}^k \Pic_{\QQ}(\Gamma_i) \oplus \QQ^{r+k},$$
where we used that $CH_{\QQ}^*(\Gamma_i) = \Pic_{\QQ} (\Gamma_i) \oplus \QQ$.
This decomposition
is induced by the Fourier--Mukai functors $\Phi_i: \Db(\Gamma_i) \to \Db(X)$, then
the previous arguments show that
$pr$ restricted to $\oplus_{i=1}^k \Pic_{\QQ}^0(\Gamma_i)$ is injective and has image in $A_{\QQ}^2(X)$.
This map corresponds on each direct summand to the injective map $\psi_{i,\QQ}$ obtained as in Lemma \ref{injective-isogeny}.
Then the restriction of $pr$ to $\oplus_{i=1}^k \Pic_{\QQ}^0(\Gamma_i)$ corresponds to the sum of all those maps,
and we denote it by $\psi_{\QQ}$. Consider now the diagram
$$\xymatrix{
0 \ar[r] & \bigoplus_{i=0}^k \Pic_{\QQ}^0(\Gamma_i) \ar[r] \ar@{_{(}->}[d]_{pr=\psi_{\QQ}} & 
\bigoplus_{i=0}^k \Pic_{\QQ} (\Gamma) \oplus \QQ^{k+r} \ar[d]_{pr} \ar[dr]^{\bar{pr}} \\
0 \ar[r] & A_{\QQ}^2(X) \ar[r] & CH_{\QQ}^2(X) \ar[r] & CH_{\QQ}^2(X) / A_{\QQ}^2(X) \ar[r] & 0, 
}$$ 
where $\bar{pr}$ denotes the composition of $pr$ with the projection onto the the quotient. 

Denote by $J:=\psi(\oplus \Pic^0(\Gamma_i))$ the image of $\psi$ and $J_{\QQ}:=J \otimes \QQ$. 
By the above diagram, we have that the cokernel $A_{\QQ}^2(X)/J_{\QQ}$ is a finite dimensional $\QQ$-vector space,
since it has to be contained in $(\oplus (\Pic_{\QQ}(\Gamma_i)/\Pic_{\QQ}^0(\Gamma_i)) \oplus \QQ^{k+r}$.
Since $\psi$ is a morphism of Abelian varieties, its cokernel is also an Abelian variety, and then
it has to be trivial. This gives the surjectivity of $\psi$ and proves Theorem \ref{main-theorem1}.

\begin{remark}\label{numb-of-obj}
Let $\rho$ be the rank of the Picard group of $S$.
The numbers $k$ and $l$ of curves of positive genus and exceptional object respectively
in the semiorthogonal decomposition of Theorem \ref{main-theorem1}  satisfy a linear equation:
using the decomposition (\ref{orth-decomp-chow}), we obtain $l=2+\rho-2k$.
\end{remark}

\section{Rational conic bundles over the plane}\label{sect-plane}

Let $\pi:X \to \PP^2$ be a rational standard conic bundle. In particular, this implies that $C$ has positive
arithmetic genus (see e.g. \cite[Sect. 1]{iskovconicduke}). There are only three non-trivial possibilities for the
discriminant curve (\cite{beauvillejaco, shokuprym, iskocongru}). In fact, $X$ is rational if and only if 
is a quintic and the double covering $\tilde{C} \to C$ is given by an even theta characteristic, or $C$ is a quartic 
or a cubic curve.
As proved in Lemma \ref{lem-square-equiv}, once we fix the discriminant curve and the associated double
cover, we fix the Clifford algebra $\kb_0$. We then construct for
any such plane curve and associated double cover a model of rational standard conic bundle $X$ for which we provide the required
semiorthogonal decomposition. We analyze the three cases separately.

\subsection{Degree five discriminant}

Suppose $C$ is a degree 5 curve and $\tilde{C} \to C$ is given by 
an even theta-characteristic. Recall the description of a birational map $\chi: X \to \PP^3$
from \cite{panin} (see also \cite{iskovconicduke}).
There is a smooth curve $\Gamma$ of genus 5 and degree 7 in $\PP^3$ such that
$\chi: X \to \PP^3$ is the blow-up of $\PP^3$ along $\Gamma$. In fact the conic bundle $X\to \PP^2$
is obtained \cite{iskovconicduke} by resolving the linear system of cubics in $\PP^3$ vanishing on $\Gamma$.
Remark that $J(X)$ is isomorphic to $J(\Gamma)$ as a principally polarized Abelian variety.
 
Let $\pi: X \to \PP^2$ be the conic bundle
structure. We denote by $\ko(h)$ the pull back of $\ko_{\PP^2}(1)$ via $\pi$. Let us denote by $\ko(H)$ the pull-back
of $\ko_{\PP^3}(1)$ via $\chi$, and by $D$ the exceptional divisor. The construction of the
map $\pi$ gives $\ko(h)=\ko(3H-D)$, then we have $\ko(D) = \ko(3H-h)$. The canonical bundle $\omega_X$ is given
by $\ko(-4H+D)= \ko(-H-h)$.

\begin{proposition}\label{prop:2ndcase}
Let $\pi:X \to \PP^2$ be a standard conic bundle whose discriminant curve $C$ is a degree 5 curve and $\tilde{C} \to C$ is given 
by an even theta-characteristic.
Then there exists an exceptional object $E$ in $\Db(\PP^2,\kb_0)$ such that (up to equivalences):
$$\Db(\PP^2,\kb_0) = \langle \Db(\Gamma), E\rangle,$$
where $\Gamma$ is a smooth projective curve such that $J(X) \simeq J(\Gamma)$ as a principally polarized Abelian variety.
\end{proposition}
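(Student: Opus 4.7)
The plan is to compare two semiorthogonal decompositions of $\Db(X)$. The first, from Kuznetsov's formula for conic bundles (Proposition \ref{decom-conic-bd}) combined with Beilinson's full exceptional sequence on $\PP^2$, reads
$$\Db(X) = \langle \Phi\Db(\PP^2,\kb_0),\, \pi^*\ko_{\PP^2},\, \pi^*\ko_{\PP^2}(1),\, \pi^*\ko_{\PP^2}(2)\rangle.$$
The second, from the Orlov blow-up formula (Proposition \ref{decomp-blow-up}) applied to $\chi\colon X\to\PP^3$ combined with Beilinson's sequence on $\PP^3$, reads
$$\Db(X) = \langle \Psi\Db(\Gamma),\, \chi^*\ko_{\PP^3},\, \chi^*\ko_{\PP^3}(1),\, \chi^*\ko_{\PP^3}(2),\, \chi^*\ko_{\PP^3}(3)\rangle.$$
The numerics match what we want: by Remark \ref{numb-of-obj}, with $\rho=1$ and $k=1$ the target decomposition should contain exactly $l=1$ exceptional object, which matches the one extra exceptional present on the blow-up side versus the Kuznetsov side.

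The first step is to exploit the divisor-class relation $h = 3H - D$ on $X$, together with the canonical bundle formula $\omega_X = -H - h$ recalled before the proposition, to relate the two families of line bundles $\ko_X(iH) = \chi^*\ko_{\PP^3}(i)$ and $\ko_X(jh) = \pi^*\ko_{\PP^2}(j)$. The plan is then to perform a sequence of mutations (Lemma \ref{lem-mut}) on the blow-up decomposition, combined with tensor twists by line bundles (which preserve semiorthogonal decompositions), so as to rearrange its exceptional part into the form $\langle E,\, \pi^*\ko_{\PP^2},\, \pi^*\ko_{\PP^2}(1),\, \pi^*\ko_{\PP^2}(2)\rangle$, possibly replacing $\Psi\Db(\Gamma)$ by an equivalent image $\Psi'\Db(\Gamma)$. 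The outcome would be a decomposition
$$\Db(X) = \langle \Psi'\Db(\Gamma),\, E,\, \pi^*\ko_{\PP^2},\, \pi^*\ko_{\PP^2}(1),\, \pi^*\ko_{\PP^2}(2)\rangle$$
with $E$ a single exceptional object.

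Confronting this with the Kuznetsov decomposition, the left orthogonal complement of the exceptional sequence $\pi^*\ko_{\PP^2},\, \pi^*\ko_{\PP^2}(1),\, \pi^*\ko_{\PP^2}(2)$ inside $\Db(X)$ is uniquely determined, whence $\Phi\Db(\PP^2,\kb_0) = \langle \Psi'\Db(\Gamma), E\rangle$. Since $\Phi$ is an equivalence onto its image, this descends (up to equivalences) to the required semiorthogonal decomposition of $\Db(\PP^2,\kb_0)$. The identification $J(X) \iso J(\Gamma)$ as principally polarized abelian varieties is classical from the blow-up description of $X$ (since $J(\PP^3) = 0$, blowing up $\PP^3$ along a smooth curve adds a principally polarized $J(\Gamma)$), and is also an immediate consequence of Theorem \ref{main-theorem1} applied to the decomposition just produced.

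The main obstacle will be the explicit mutation sequence: one must compute the relevant $\Hom$'s and $\Ext$'s between the various line bundles $\ko_X(iH), \ko_X(jh), \ko_X(nD)$ on $X$, and verify step by step that every intermediate object in the mutation process is either exceptional or an equivalent image of $\Db(\Gamma)$, while also identifying the leftover object $E$ concretely. This is essentially a careful book-keeping exercise with Ext groups, but it is where the real technical work lies.
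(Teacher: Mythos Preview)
Your strategy is exactly the paper's: compare the Kuznetsov decomposition with the Orlov blow-up decomposition via mutations and read off the claimed decomposition of $\Db(\PP^2,\kb_0)$ from the common orthogonal complement. The overall logic, including the use of $h=3H-D$ and $\omega_X=-H-h$, is correct.

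Where your proposal differs from the paper is in the execution of the mutations, and this is worth flagging because the paper's choices let you avoid the ``careful book-keeping exercise with Ext groups'' you anticipate. First, the paper mutates the \emph{Kuznetsov} side (three exceptional objects) rather than the blow-up side (four), which is less work. Second, and more importantly, the paper almost never computes an Ext group directly: the key moves are mutations of a single object through its \emph{entire} left or right orthogonal, which by Lemma~\ref{lem-mut} is just tensoring by $\omega_X^{\pm 1}=\mp(H+h)$. This is how $h$ becomes $-H$ and $-h$ becomes $H$ in one step each. The only genuine semiorthogonality that must be checked is that $\langle -H,-h\rangle$ is completely orthogonal, and even this is not done by computing $\Hom^\bullet(-H,-h)$ from scratch: it is read off from the \emph{other} decomposition (the blow-up one) after a single Serre mutation. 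So the ``main obstacle'' you identify largely evaporates if you mutate the Kuznetsov side and systematically use Lemma~\ref{lem-mut}; the paper uses the exceptional sequences $\langle -2H,-H,\ko,H\rangle$ and $\langle -h,\ko,h\rangle$ (centered at $\ko$) precisely so that these Serre mutations line up. Your choice of $\langle \ko,h,2h\rangle$ and $\langle \ko,H,2H,3H\rangle$ would work too after a global twist, but you would end up recreating the same moves.
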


\begin{proof}
Consider the blow-up $\chi: X \to \PP^3$. Orlov formula (see Prop. \ref{decomp-blow-up}) provides
a fully faithful functor $\Psi: \Db(\Gamma) \to \Db(X)$ and a semiorthogonal decomposition:
$$
\Db(X) = \langle \Psi \Db(\Gamma), \chi^* \Db(\PP^3) \rangle.
$$
The derived category $\Db(\PP^3)$ has a full exceptional sequence
$\langle\ko_{\PP^3}(-2),\ko_{\PP^3}(-1),\ko_{\PP^3}, \ko_{\PP^3}(1) \rangle$. We get then the semiorthogonal decomposition:
\begin{equation}\label{decomp-blow-quintic}
\Db(X) = \langle \Psi \Db(\Gamma), \ko(-2H), \ko(-H), \ko, \ko(H) \rangle.
\end{equation}
Kuznetsov formula (see Prop. \ref{decom-conic-bd}) provides the decomposition:
$$
\Db(X) = \langle \Phi \Db(\PP^2,\kb_0), \pi^* \Db(\PP^2) \rangle.
$$
The derived category $\Db(\PP^2)$ has a full exceptional sequence
$\langle\ko_{\PP^2}(-1),\ko_{\PP^2}, \ko_{\PP^2}(1) \rangle$. We get then the semiorthogonal decomposition
\begin{equation}\label{decomp-conic-quintic}
\Db(X) = \langle \Phi \Db(\PP^2,\kb_0), \ko(-h), \ko, \ko(h) \rangle.
\end{equation}
We perform now some mutation to compare the decompositions \ref{decomp-blow-quintic} and \ref{decomp-conic-quintic}.

Consider the decomposition \ref{decomp-conic-quintic} and mutate $\Phi \Db(\PP^2,\kb_0)$ to the right through $\ko(-h)$.
The functor $\Phi' = \Phi \circ R_{\ko(-h)}$ is full and faithful and we have the semiorthogonal decomposition:
$$\Db(X) = \langle\ko(-h), \Phi' \Db(\PP^2,\kb_0), \ko, \ko(h) \rangle.$$
Perform the left mutation of $\ko(h)$ through its left orthogonal, which gives
$$\Db(X) = \langle\ko(-H), \ko(-h), \Phi' \Db(\PP^2,\kb_0), \ko\rangle,$$
using Lemma \ref{lem-mut} and $\omega_X = \ko(-H-h)$.
\begin{lemma}\label{complet-orth-1}
The pair $\langle\ko(-H),\ko(-h)\rangle$ is completely orthogonal.
\end{lemma}
\begin{proof}
Consider the semiorthogonal decomposition \ref{decomp-blow-quintic} and perform the left mutation of $\ko(H)$
through its left orthogonal. By Lemma \ref{lem-mut} we get
$$\Db(X) = \langle \ko(-h), \Psi \Db(\Gamma), \ko(-2H), \ko(-H), \ko \rangle,$$
which gives us $\Hom^{\bullet}(\ko(-H),\ko(-h))=0$ by semiorthogonality.
\end{proof}
We can now exchange $\ko(-H)$ and $\ko(-h)$, obtaining a semiorthogonal decomposition
$$\Db(X) = \langle\ko(-h), \ko(-H), \Phi' \Db(\PP^2,\kb_0), \ko\rangle.$$
The right mutation of $\ko(-h)$ through its right orthogonal gives (with Lemma \ref{lem-mut})
the semiorthogonal decomposition:
$$\Db(X) = \langle\ko(-H), \Phi' \Db(\PP^2,\kb_0), \ko, \ko(H)\rangle.$$
Perform the left mutation of $\Phi' \Db(\PP^2,\kb_0)$ through $\ko(-H)$. The functor $\Phi'' = \Phi' \circ L_{\ko(-H)}$
is full and faithful and we have the semiorthogonal decomposition:
$$\Db(X) = \langle\Phi'' \Db(\PP^2,\kb_0),\ko(-H), \ko, \ko(H)\rangle.$$
This shows, by comparison with (\ref{decomp-blow-quintic}),
that $\Phi''\Db(\PP^2,\kb_0) = \langle\Psi \Db(\Gamma), \ko(-2H)\rangle$. 
\end{proof}

\subsection{Degree four discriminant}

Suppose $C \subset \PP^2$ is a degree four curve with at most double points. We are going to describe $X$ as a hyperplane section
of a conic bundle over (a blow-up of) $\PP^3$, basing upon a construction from \cite{micheleconicbundle}.
Let $\Gamma$ be a smooth genus 2 curve, and $\Pic^n(\Gamma)$ the Picard variety of $\Gamma$ that parametrizes degree $n$
line bundles, up to linear equivalence. Since $g(\Gamma)=2$, $\Pic^1(\Gamma)$ contains the canonical Riemann theta divisor 
$\Theta:=\{L\in \Pic^1(\Gamma)|h^0(\Gamma,L)\neq 0\}$. It is well known that the Kummer surface $Kum(\Gamma):=\Pic^0(\Gamma)/\pm \id$
is naturally embedded in the linear system $|2\Theta|=\PP^3$ via the map 
\begin{eqnarray*}
Kum(\Gamma) & \rightarrow & |2\Theta|;\\
(\alpha \sim -\alpha) & \mapsto & \Theta_{\alpha} + \Theta_{-\alpha};
\end{eqnarray*}
where $\Theta_{\alpha}$ is the theta divisor translated by $\alpha$.
Hence the surface $Kum(\Gamma)$ sits in $\PP^3$ as a quartic surface 
with 16 double points corresponding to the 2-torsion. Note that the point corresponding to the line bundle $\ko_{\Gamma}$ is a node, and we will
call it \it the origin \rm or simply $\ko_{\Gamma}$.

Now we remark that $\Gamma$ is tri-canonically embedded in $\PP^4=|\omega_{\Gamma}^3|^*$, moreover we have a rational map
$$\varphi:\PP^4 \dashrightarrow \PP^3 :=|\cI_{\Gamma}(2)|^*$$
given by quadrics in the ideal of $\Gamma$.  In \cite{micheleconicbundle} it is shown that there exists an
isomorphism $|\cI_{\Gamma}(2)|^*\cong |2\Theta|$. Let now $\widetilde{Kum}(\Gamma)$ be the blow-up of $Kum(\Gamma)$
in the origin $\ko_{\Gamma}$ and $\widetilde{\PP^3}$ the corresponding blow up of $\PP^3$,
so that we have $\widetilde{Kum}(\Gamma)\subset \widetilde{\PP^3}$. Consider now the curve $\Gamma$ in $\PP^4$ and any point
$p\in \Gamma$. We denote by $q_p$ the only effective divisor in the linear system $|\omega_{\Gamma} (-p)|$
(notice that $q_p = \tau(p)$, where $\tau$ denotes the hyperelliptic involution). The ruled surface
$$S:=\{x\in \PP^4| \exists p\in \Gamma, x\in \overline{pq_p}\}$$
is a cone over a twisted cubic $Y$ in $\PP^3$ \cite[Prop1.2.2]{micheleconicbundle}. Let $\widetilde{\PP^4}$ the blow-up of $\PP^4$ along the cubic cone,
then the main result of \cite{micheleconicbundle} can be phrased as follows.

\begin{theorem}\label{bolos}

The rational map $\varphi$ resolves to a morphism $\widetilde{\varphi}:\widetilde{\PP^4} \rightarrow \widetilde{\PP^3}$ that is a
conic bundle degenerating on $\widetilde{Kum}(\Gamma)$. Hence we have the following commutative diagram.

$$\xymatrix{
\widetilde{\PP^4} \ar[r]^{\widetilde{\varphi}} \ar[d] & \widetilde{\PP^3} \ar[d] \\
\PP^4 \ar@{-->}[r]^{\varphi} & \PP^3 \\
}$$

\end{theorem}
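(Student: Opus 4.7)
The plan is to resolve the indeterminacy of $\varphi$ by identifying its scheme-theoretic base locus with the cubic cone $S$, then blowing up to obtain the conic bundle, and finally describing the discriminant via the identification $|\cI_{\Gamma}(2)|^{*}\cong |2\Theta|$ mentioned above.

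The first step is a dimension count and an identification of the base locus. From the sequence $0\to \cI_{\Gamma}(2)\to \ko_{\PP^{4}}(2)\to \ko_{\Gamma}(2)\to 0$ with $h^{0}(\ko_{\PP^{4}}(2))=15$ and $h^{0}(\omega_{\Gamma}^{6})=11$ (Riemann--Roch on $\Gamma$, since $\deg \omega_{\Gamma}^{6}=12>2g-2$), together with projective normality of the tri-canonical embedding, one obtains $h^{0}(\cI_{\Gamma}(2))=4$, confirming that $\varphi$ lands in $\PP^{3}$. For every divisor $p+q_{p}\in |\omega_{\Gamma}|$ the secant $\ell_{p}:=\overline{pq_{p}}$ meets $\Gamma$ with length at least $2$, so any quadric in $\cI_{\Gamma}(2)$ must contain $\ell_{p}$; hence the ruling $S=\bigcup_{p}\ell_{p}$ is contained in the base locus. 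A dimension/Koszul argument for the smooth tri-canonical curve then forces equality, and blowing up $\PP^{4}$ along $S$ makes the pulled-back ideal locally principal, so the linear system extends to a morphism $Bl_{S}\PP^{4}\to \PP^{3}$, apart possibly from the image of the vertex of the cone.

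For the generic fiber, take $y\in \PP^{3}$ outside the discriminant, so that $\varphi^{-1}(y)$ is the residual intersection of three generators of the corresponding pencil of quadrics after removing the common part supported on $S$. By B\'ezout three quadrics in $\PP^{4}$ meet in a degree-$8$ curve; subtracting $\deg \Gamma=6$ yields a residual conic $C_{y}\subset \PP^{4}$, and its strict transform in $Bl_{S}\PP^{4}$ is the fiber of $\widetilde{\varphi}$. Thus $\widetilde{\varphi}$ is generically a conic fibration.

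For the discriminant and the need to blow up the target at $\ko_{\Gamma}$, I would exploit the identification $|\cI_{\Gamma}(2)|^{*}\cong |2\Theta|$: the conic $C_{y}$ degenerates exactly where the pencil of quadrics through $\Gamma\cup C_{y}$ has a rank drop, and this locus is classically the Kummer surface $Kum(\Gamma)\subset |2\Theta|$ realised as $\Pic^{0}(\Gamma)/{\pm\id}$. The vertex $v$ of the cubic cone $S$ is the unique point of $\PP^{4}$ where all four defining quadrics vanish to second order, and its limiting image is precisely the node $\ko_{\Gamma}\in Kum(\Gamma)$; the map $Bl_{S}\PP^{4}\to \PP^{3}$ sends the exceptional fiber over $v$ to an entire surface above $\ko_{\Gamma}$, so to obtain a morphism with conic fibers one must replace the target by its blow-up $\PP^{3}_{\ko}$. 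After this further blow-up the fibers separate and the degeneration divisor is the strict transform $\widetilde{Kum}(\Gamma)$. The hard step will be this analysis over $\ko_{\Gamma}$: checking simultaneously that the vertex of $S$ is the unique source of failure of the naive resolution, that blowing up both $S$ in the source and $\ko_{\Gamma}$ in the target genuinely separates fibers into (possibly degenerate) conics, and that no spurious discriminant component appears along the exceptional divisor of $\PP^{3}_{\ko}\to \PP^{3}$.
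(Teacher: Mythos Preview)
The paper does not prove this statement: it is quoted as ``the main result of \cite{micheleconicbundle}'' and used as a black box. So there is no in-paper argument to compare against; the following are comments on your sketch.

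There is a genuine error in your first step. You claim that every quadric in $|\cI_{\Gamma}(2)|$ contains each canonical secant $\ell_{p}=\overline{pq_{p}}$ because it meets $\ell_{p}$ with length $\ge 2$; but a quadric meeting a line in two points need not contain it---that requires length $\ge 3$. In fact $h^{0}(\cI_{S}(2))=3$ (the cone over a twisted cubic is cut out by the three $2\times 2$ minors), strictly less than $h^{0}(\cI_{\Gamma}(2))=4$, so a general quadric through $\Gamma$ does not contain $S$. The scheme-theoretic base locus of $\varphi$ is the curve $\Gamma$, not the surface $S$, and hence your assertion that ``blowing up $\PP^{4}$ along $S$ makes the pulled-back ideal locally principal'' is false as stated.

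What is actually going on is that the hyperplane $H^{0}(\cI_{S}(2))\subset H^{0}(\cI_{\Gamma}(2))$ corresponds to the point $\ko_{\Gamma}\in\PP^{3}$, and the closure of the fibre $\varphi^{-1}(\ko_{\Gamma})$ is the entire surface $S$, whereas over a general point your B\'ezout count $8-6=2$ does correctly produce a residual conic. Blowing up $S$ in the source is therefore not a base-locus resolution but a resolution of this two-dimensional special fibre; simultaneously one blows up $\ko_{\Gamma}$ in the target, and one must verify that over the exceptional $\PP^{2}\subset\PP^{3}_{\ko}$ the strict transform of $S$ breaks into an honest family of conics with the right degeneration. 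That verification, together with the identification of the discriminant with $\widetilde{Kum}(\Gamma)$, is the content of \cite{micheleconicbundle}. Your diagnosis of where the difficulty lies is correct, but the reason one blows up $S$ must be reconceived.
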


\begin{remark}
The conic bundle described in Thm. \ref{bolos} is standard. This is straightforward from the description in \cite{micheleconicbundle}.
% se consideri anche il divisore eccezionale come facente parte del %blowup di kum
\end{remark} 

For any plane quartic curve $C$ with at most double points, we are going to obtain a structure of a standard
conic bundle on $\PP^2$ degenerating on $C$ by taking the restriction of $\widetilde{\varphi}$ to suitable hyperplanes $H\subset \PP^3$
for suitable choices of the genus two curve $\Gamma$ (and hence degenerating on the quartic curve $H\cap Kum(\Gamma)$). In fact every quartic curve with at most double points can be obtained via 
hyperplane intersection with an appropriate Jacobian Kummer surface, see \cite[Rem. 2.2]{almeida-gruson-perrin} 
and \cite{verraprym}. 

\begin{proposition}\label{prop-the-quartic-discriminant}
Let $X \to \PP^2$ be a standard conic bundle whose discriminant locus is a degree $4$ curve. Then there
exists a smooth genus $2$ curve $\Gamma$ and an embedding $\Gamma \subset Z$ into a smoth quadric threefold,
such that $X$ is isomorphic to the blow-up of $Z$ along $\Gamma$.
\end{proposition}

\begin{proof}
First, given $\Gamma$, we use Thm. \ref{bolos} to construct explicitly a conic bundle with the required properties.

Consider the composition $\phi:\widetilde{\PP^4} \rightarrow \widetilde{\PP^3} \rightarrow \PP^3$ of the conic 
bundle of Thm. \ref{bolos} with the blow-down map. Consider a hyperplane $N\subset \PP^3$ not containing 
the origin $\ko_{\Gamma}$ and denote by $X:= \phi^{-1}(N)$ and by $\pi$ the restriction of $\phi$ to $X$.
Then the induced map $\pi: X \rightarrow N \simeq \PP^2$ defines a standard conic bundle that degenerates on the intersection $N\cap Kum(\Gamma)$. Then it is easy to see that $X$ is isomorphic to the blow-up along $\Gamma$ of a smooth  quadric hypersurface $Z \subset \PP^4$ in the ideal of $\Gamma \subset\PP^4$. We remark that the quadric $Z$ is smooth because singular quadrics in the ideal of $\Gamma$ are cones over the quadrics in $\PP^3$ vanishing on the twisted cubic $Y$, and they correspond to hyperplanes in $\PP^3$ that contain $\ko_{\Gamma}$.
 
It is also known (\cite{almeida-gruson-perrin}, \cite{verraprym}) that the admissible double cover of 
$N\cap Kum(\Gamma)$ induced by the degree 2 cover $J(\Gamma)\rightarrow Kum(\Gamma)$ has Prym variety isomorphic to 
$J(\Gamma)$ and (\cite{verraprym}) that, by choosing appropriate genus 2 curves, one obtains all admissible double covers of all plane quartics this way.
Remark that this coincides in fact with the double cover of the plane quartic induced by the restriction of the conic bundle
degenerating on the Kummer variety. This means that the intermediate Jacobian $J(X)$ is isomorphic to $J(\Gamma)$ as a principally
polarized Abelian variety.

To complete the proof, we show that for any admissible double cover of a plane quartic, the corresponding standard
conic bundle can be obtained by the previous construcion. Remark indeed that we can always assume that the quadric 
hypersurface $Z$ we are considering is smooth:
%In fact, he locus of singular quadric hypersurfaces corresponds via $\phi$ 
%to hyperplanes in $\PP^3$ passing through the origin $\ko_{\Gamma}$ of the %Kummer surface. Notably these 
%correspond to the quadric cones over the quadrics in $\PP^3$ vanishing on the twisted cubic $Y$. It is easy 
%to see, 
by using the invariance of $Kum(\Gamma)$ under the action of $(\ZZ/2\ZZ)^4$ we can just not consider hyperplanes passing through the origin (whose inverse images are singular quadrics). Indeed, up to the choice of the appropriate curve $\Gamma$, one can obtain any plane quartic with at most double points by considering the intersection of hyperplanes in $\PP^3$, that do not contain the origin, with the Kummer quartic.
\end{proof}

Resuming, let $\chi: X \to Z$ be the blow-up of $Z$ along $\Gamma$. Let us denote by $\ko(H)$ both the restriction of
$\ko_{\PP^4}(1)$ to $Z$ and its pull-back to $X$ via $\chi$, and by $D$ the exceptional divisor.
Remark that $\omega_Z = \ko(-3H)$. Let $\Sigma$ be the spinor bundle on the quadric $Z$.
Let $\pi: X \to \PP^2$ be the conic bundle
structure. We denote by $\ko(h)$ the pull back of $\ko_{\PP^2}(1)$ via $\pi$. The construction of the
map $\pi$ gives $\ko(h)=\ko(2H-D)$, then we have $\ko(D) = \ko(2H-h)$. The canonical bundle $\omega_X$ is given
by $\ko(-3H+D)= \ko(-H-h)$.

\begin{proposition}\label{prop:1stcase}
Let $\pi:X \to \PP^2$ be a standard conic bundle whose discriminant locus $C$ is a degree 4 curve.
Then there exists an exceptional
object $E$ in $\Db(\PP^2,\kb_0)$ such that (up to equivalences):
$$\Db(\PP^2,\kb_0) = \langle \Db(\Gamma), E\rangle,$$
where $\Gamma$ is a smooth projective curve such that $J(X) \simeq J(\Gamma)$ as a principally polarized Abelian variety.
\end{proposition}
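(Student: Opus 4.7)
The plan is to adapt the argument of Proposition \ref{prop:2ndcase} (the degree five case) by replacing the blow-up target $\PP^3$ with the smooth three-dimensional quadric $Z \subset \PP^4$. The key numerical coincidence that makes the adaptation work is $\omega_X = -H - h$, which has exactly the same form as in the quintic case, so every mutation involving $\omega_X$ will produce the same expressions. The new ingredient is the spinor bundle $\Sigma$ on $Z$, which will end up providing the single exceptional object $E$ in the final decomposition of $\Db(\PP^2, \kb_0)$.

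First I would set up the two decompositions to be compared. Applying Orlov's blow-up formula (Proposition \ref{decomp-blow-up}) to $\chi: X \to Z$ gives a fully faithful $\Psi: \Db(\Gamma) \to \Db(X)$ and $\Db(X) = \langle \Psi\Db(\Gamma), \chi^*\Db(Z) \rangle$. A suitable twist of Kapranov's full exceptional sequence on the three-dimensional smooth quadric has the form $\langle \Sigma(a), \ko_Z(-1), \ko_Z, \ko_Z(1) \rangle$ for a well-chosen integer $a$, so that pulling back via $\chi$ yields
$$\Db(X) = \langle \Psi\Db(\Gamma), \chi^*\Sigma(a), -H, \ko, H \rangle.$$
On the other hand, Kuznetsov's formula (Proposition \ref{decom-conic-bd}) together with the standard full exceptional sequence of $\Db(\PP^2)$ gives
$$\Db(X) = \langle \Phi\Db(\PP^2, \kb_0), -h, \ko, h \rangle.$$

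Next I would repeat the chain of mutations from the quintic case. Right-mutating $\Phi\Db(\PP^2, \kb_0)$ through $-h$, and then left-mutating $h$ through its left orthogonal (with $L(h) = h \otimes \omega_X = -H$ by Lemma \ref{lem-mut}), yields $\Db(X) = \langle -H, -h, \Phi'\Db(\PP^2, \kb_0), \ko \rangle$. Complete orthogonality of the pair $\{-H, -h\}$ follows exactly as in Lemma \ref{complet-orth-1}: one direction is immediate from this decomposition, while the other is obtained by left-mutating $H$ through its left orthogonal in the blow-up decomposition, which produces $L(H) = -h$ and places $-h$ strictly to the left of $-H$. After swapping $-H$ and $-h$, right-mutating $-h$ through its right orthogonal (so that $R(-h) = -h \otimes \omega_X^{-1} = H$), and finally left-mutating $\Phi'\Db(\PP^2, \kb_0)$ through $-H$, I arrive at $\Db(X) = \langle \Phi''\Db(\PP^2, \kb_0), -H, \ko, H \rangle$ with $\Phi''$ fully faithful. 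Comparing with the blow-up decomposition on the shared right tail $\langle -H, \ko, H \rangle$ and invoking uniqueness of the left orthogonal gives $\Phi''\Db(\PP^2, \kb_0) = \langle \Psi\Db(\Gamma), \chi^*\Sigma(a) \rangle$, from which the desired decomposition of $\Db(\PP^2, \kb_0)$ follows, with $E$ corresponding to the twisted spinor bundle. The isomorphism $J(X) \simeq J(\Gamma)$ of principally polarized abelian varieties has already been recorded in the discussion preceding the statement. The main technical check I expect to perform is to verify that Kapranov's sequence on $Z$ can indeed be twisted so that three consecutive powers $\ko_Z(-1), \ko_Z, \ko_Z(1)$ appear; this is the only step in which the geometry of the quadric (as opposed to that of $\PP^3$) enters nontrivially.
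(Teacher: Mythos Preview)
Your proposal is correct and follows essentially the same path as the paper's proof; the paper even remarks that ``surprisingly to us, we will follow the same path as in the proof of Proposition \ref{prop:2ndcase}.'' The only point you leave open, namely the twist $a$, is $a=-2$ in the paper (so the sequence is $\langle \Sigma(-2),\ko_Z(-1),\ko_Z,\ko_Z(1)\rangle$), and the check you flag is automatic: tensoring Kapranov's full exceptional collection by any line bundle yields another full exceptional collection, so arranging three consecutive twists $\ko_Z(-1),\ko_Z,\ko_Z(1)$ requires no work.
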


\begin{proof}
Consider the blow-up $\chi: X \to Z$. Orlov formula (see Prop. \ref{decomp-blow-up}) provides
a fully faithful functor $\Psi: \Db(\Gamma) \to \Db(X)$ and a semiorthogonal decomposition:
$$
\Db(X) = \langle \Psi \Db(\Gamma), \chi^* \Db(Z) \rangle.
$$
By \cite{kapranovquadric}, the derived category $\Db(Z)$ has a full exceptional sequence
$\langle \Sigma (- 2H), \ko(-H), \ko, \ko(H) \rangle$. We get then the semiorthogonal
decomposition:
\begin{equation}\label{decomp-blow-quartic}
\Db(X) = \langle \Psi \Db(\Gamma), \Sigma (-2H), \ko(-H), \ko, \ko(H) \rangle.
\end{equation}
Kuznetsov formula (see Prop. \ref{decom-conic-bd}) provides the decomposition:
$$
\Db(X) = \langle \Phi \Db(\PP^2,\kb_0), \pi^* \Db(\PP^2) \rangle.
$$
The derived category $\Db(\PP^2)$ has a full exceptional sequence
$\langle\ko_{\PP^2}(-1),\ko_{\PP^2}, \ko_{\PP^2}(1) \rangle$. We get then the semiorthogonal decomposition:
\begin{equation}\label{decomp-conic-quartic}
\Db(X) = \langle \Phi \Db(\PP^2,\kb_0), \ko(-h), \ko, \ko(h) \rangle.
\end{equation}
We perform now some mutation to compare the decompositions (\ref{decomp-blow-quartic}) and (\ref{decomp-conic-quartic}).
Surprisingly to us, we will follow the same path as in the proof of Proposition \ref{prop:2ndcase}.

Consider the decomposition (\ref{decomp-conic-quartic}) and mutate $\Phi \Db(\PP^2,\kb_0)$ to the right through $\ko(-h)$.
The functor $\Phi' = \Phi \circ R_{\ko(-h)}$ is full and faithful and we have the semiorthogonal decomposition:
$$\Db(X) = \langle\ko(-h), \Phi' \Db(\PP^2,\kb_0), \ko, \ko(h) \rangle.$$
Perform the left mutation of $\ko(h)$ through its left orthogonal, which gives
$$\Db(X) = \langle\ko(-H), \ko(-h), \Phi' \Db(\PP^2,\kb_0), \ko\rangle,$$
using Lemma \ref{lem-mut} and $\omega_X = \ko(-H-h)$.
We can prove the following Lemma in the same way we proved Lemma \ref{complet-orth-1}.
\begin{lemma}\label{complet-orth-2}
The pair $\langle\ko(-H),\ko(-h)\rangle$ is completely orthogonal.
\end{lemma}
We can now exchange $\ko(-H)$ and $\ko(-h)$, obtaining a semiorthogonal decomposition
$$\Db(X) = \langle\ko(-h), \ko(-H), \Phi' \Db(\PP^2,\kb_0), \ko\rangle.$$
The right mutation of $\ko(-h)$ through its right orthogonal gives (with Lemma \ref{lem-mut})
the semiorthogonal decomposition:
$$\Db(X) = \langle\ko(-H), \Phi' \Db(\PP^2,\kb_0), \ko, \ko(H)\rangle.$$
Perform the left mutation of $\Phi' \Db(\PP^2,\kb_0)$ through $\ko(-H)$. The functor $\Phi'' = \Phi' \circ L_{\ko(-H)}$
is full and faithful and we have the semiorthogonal decomposition:
$$\Db(X) = \langle\Phi'' \Db(\PP^2,\kb_0),\ko(-H), \ko, \ko(H)\rangle.$$
This shows, by comparison with (\ref{decomp-blow-quartic})
that $\Phi''\Db(\PP^2,\kb_0) = \langle\Psi \Db(\Gamma), \Sigma(-2H)\rangle$.
\end{proof}

\subsection{Degree three discriminant}

Let $\pi: X \to \PP^2$ be a standard conic bundle whose discriminant $C$ is a cubic curve.
Consider
$X \subset \PP^2 \times \PP^2$ a hypersurface of bidegree $(1,2)$. The map $\pi$  given by the restriction of the first
projection $p_1: \PP^2 \times \PP^2 \to \PP^2$ is a conic bundle degenerating on a cubic curve. %Indeed the datum of a
%nontrivial theta-characteristic $\alpha$ (in fact, a 2-torsion point in $J(C)$) %on a plane cubic displays the curve 
%as the discriminant curve of a net of conics in $\PP^2$ (see for example \cite[Sect. 4]{beauville-deter}). 
More precisely, any cubic curve $C$ with nodes as singularities can be written as a determinant of a $3\times 3$ symmetric matrix of linear forms, with rank dropping on the nodes. In this way one obtains the bidegree $(1,2)$ hypersurface which is the standard conic bundle degenerating on $C$. The restriction of the second projection gives a $\PP^1$-bundle structure $p: X \to \PP^2$. Remark that the intermediate Jacobian $J(X)$ is trivial.

Let $\ko(h):= \pi^* \ko_{\PP^2}(1)$ 
and $\ko(H) := p^* \ko_{\PP^2}(1)$, then $\ko(H) = \ko_{\pi}(1)$ and $\ko(h) = \ko_{p}(1)$.
We have the canonical bundle $\omega_X = \ko(-2h - H)$.

\begin{proposition}
Let $\pi:X \to \PP^2$ be a standard conic bundle whose discriminant locus $C$ is a degree 3 curve. Then there exist three exceptional
objects $E_1$, $E_2$ and $E_3$ in $\Db(\PP^2,\kb_0)$ such that (up to equivalences):
$$\Db(\PP^2,\kb_0) = \langle E_1, E_2, E_3 \rangle.$$
\end{proposition}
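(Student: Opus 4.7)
The proof will follow the same strategy as Propositions \ref{prop:2ndcase} and \ref{prop:1stcase}, with the $\PP^1$-bundle structure $p: X \to \PP^2$ given by the second projection playing the role previously played by the blow-up. The idea is to compare the Kuznetsov decomposition of $\Db(X)$ with a full exceptional collection coming from $p$, and to extract from this comparison a full exceptional collection of $\Phi\Db(\PP^2,\kb_0)$.

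First, apply Proposition \ref{prop-decom-projbund} to the $\PP^1$-bundle $p$, combined with the full exceptional collection $\langle \ko_{\PP^2}(-1), \ko_{\PP^2}, \ko_{\PP^2}(1) \rangle$ of $\Db(\PP^2)$, to obtain the full exceptional collection
$$
\Db(X) = \langle -H,\, \ko,\, H,\, -H + h,\, h,\, H + h \rangle.
$$
On the other hand, Kuznetsov's formula (Proposition \ref{decom-conic-bd}), with the same exceptional collection of $\Db(\PP^2)$ pulled back via $\pi$, gives
$$
\Db(X) = \langle \Phi\Db(\PP^2, \kb_0),\, -h,\, \ko,\, h \rangle.
$$
The plan is then, starting from the Kuznetsov decomposition, to perform a sequence of mutations (via Lemma \ref{lem-mut} and the canonical bundle formula $\omega_X = -2h - H$) to bring it into the form $\langle E_1, E_2, E_3, F_1, F_2, F_3 \rangle$, where $\langle F_1, F_2, F_3 \rangle$ agrees, possibly after further mutations, with one of the two halves of the $\PP^1$-bundle exceptional collection above. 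Comparison with that collection will then identify $\Phi\Db(\PP^2, \kb_0)$ with $\langle E_1, E_2, E_3 \rangle$, as desired. This count is numerically consistent with Remark \ref{numb-of-obj}: here $\rho = 1$ and $k = 0$, so $l = 2 + \rho - 2k = 3$.

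The main obstacle will be selecting the correct sequence of mutations. Since $\omega_X = -2h - H$ here, rather than $-h - H$ as in the two previous propositions, the mutation formula of Lemma \ref{lem-mut} produces different intermediate twists, so the earlier computations cannot be transcribed verbatim; in particular the useful swap step that produced the pair $\langle -H, -h \rangle$ in the previous proofs will now yield something like $\langle -H - h, -h \rangle$, forcing one to insert extra mutations to match the target collection. Each such swap of two adjacent line bundles requires verifying a complete orthogonality property analogous to Lemmas \ref{complet-orth-1} and \ref{complet-orth-2}, which amounts to computing cohomology groups $H^{\bullet}(X, aH + bh)$ for a few small integers $a, b$. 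These reduce to elementary cohomology on $\PP^2 \times \PP^2$ through the Koszul resolution of $\ko_X$ induced by the defining bidegree $(1,2)$ equation of $X$.
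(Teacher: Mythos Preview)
Your high-level strategy is the paper's: exploit the $\PP^1$-bundle $p:X\to\PP^2$ to produce a full exceptional collection of length six, place it alongside the Kuznetsov decomposition $\langle \Phi\Db(\PP^2,\kb_0),-h,\ko,h\rangle$, and mutate until three exceptional objects are left sitting where $\Phi\Db(\PP^2,\kb_0)$ sits. So the idea is right.

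Where the paper differs from your plan is in the execution, and the difference is worth knowing because it removes exactly the ``main obstacle'' you anticipate. Two points:
\begin{itemize}
\item The paper mutates the \emph{explicit} six-term collection toward $\langle\,?,?,?,-h,\ko,h\rangle$, rather than mutating the Kuznetsov side. Since every object on that side is a line bundle (or a single mutation of one), the mutations are concrete and short.
\item The paper does \emph{not} use the same twist of $\Db(\PP^2)$ in the two slots of the $\PP^1$-bundle decomposition. It takes
\[
\Db(X)=\langle -2H,\,-H,\,\ko,\,h-H,\,h,\,h+H\rangle,
\]
i.e.\ $\langle\ko(-2),\ko(-1),\ko\rangle$ in the first block and $\langle\ko(-1),\ko,\ko(1)\rangle$ in the second. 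With this choice only three moves are needed: left-mutate $h-H$ through $\ko$ to get some $E$; send $h+H$ around to the far left via Lemma~\ref{lem-mut} (using $\omega_X=-2h-H$ this lands exactly on $-h$); then left-mutate the block $(-2H,-H,E)$ through $-h$. One reads off $\Phi\Db(\PP^2,\kb_0)=\langle E_1,E_2,E_3\rangle$ directly.
\end{itemize}
In particular no analogue of Lemmas~\ref{complet-orth-1} or~\ref{complet-orth-2} is needed here: there is no swap of adjacent line bundles, hence no complete-orthogonality computation and no Koszul argument on $\PP^2\times\PP^2$. Your route (mutate the Kuznetsov side, then verify orthogonalities to permute line bundles) should also go through, but it is longer and requires the auxiliary cohomology checks you describe; the paper's asymmetric choice of twists is the trick that short-circuits all of that.
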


\begin{proof}
Consider the $\PP^1$-bundle structure $p: X \to \PP^2$. Then by Proposition \ref{prop-decom-projbund} we have
$$\Db(X) = \langle p^* \Db(\PP^2), p^* \Db(\PP^2) \otimes \ko_p(1)\rangle,$$
which gives, recalling that $\ko(h)=\ko_p(1)$,
\begin{equation}\label{deco-cubic-1}
\Db(X) = \langle \ko(-2H), \ko(-H), \ko, \ko(h-H), \ko(h), \ko(h+H) \rangle,
\end{equation}
where we used the decompositions $\langle \ko_{\PP^2}(-2), \ko_{\PP^2}(-1), \ko_{\PP^2} \rangle$
and $\langle \ko_{\PP^2}(-1), \ko_{\PP^2}, \ko_{\PP^2}(1) \rangle$ in the first and in the second
occurrence of $p^*\Db(\PP^2)$ respectively.

Kuznetsov formula (see Prop. \ref{decom-conic-bd}) provides the decomposition
$$
\Db(X) = \langle \Phi \Db(\PP^2,\kb_0), \pi^*\Db(\PP^2) \rangle,
$$
which, choosing the decomposition $\Db(\PP^2) = \langle \ko_{\PP^2}(-1), \ko_{\PP^2}, \ko_{\PP^2}(1) \rangle$, gives
\begin{equation}\label{deco-cubic-2}
\Db(X) = \langle \Phi \Db(\PP^2,\kb_0), \ko(-h),\ko,\ko(h)\rangle.
\end{equation}
We perform now some mutation to compare the decomposition (\ref{deco-cubic-1}) and (\ref{deco-cubic-2}).

Consider the decomposition \ref{deco-cubic-1} and mutate $\ko(h-H)$ to the left through $\ko$. This gives
$$\Db(X) = \langle \ko(-2H), \ko(-H), E, \ko, \ko(h), \ko(h+H) \rangle,$$
where $E:=L_{\ko}\ko(h-H)$ is an exceptional object. Perform the left mutation of $\ko(H+h)$ through its left orthogonal,
which gives
$$\Db(X)  = \langle \ko(-h), \ko(-2H), \ko(-H), E, \ko, \ko(h) \rangle,$$
using Lemma \ref{lem-mut} and $\omega_X = \ko(-2h-H)$. Finally, mutate
the exceptional sequence $(\ko(-2H),\ko(-H),E)$  to the left through $\ko(-h)$. This gives
$$\Db(X) = \langle E_1, E_2, E_3, \ko(-h),\ko,\ko(h) \rangle,$$
where $(E_1, E_2, E_3) := L_{\ko(-h)}((\ko(-2H),\ko(-H),E))$ is an exceptional sequence. This shows, by comparison with
(\ref{deco-cubic-2}), that $\Phi \Db(\PP^2,\kb_0)= \langle E_1,E_2,E_3 \rangle$.
\end{proof}

\begin{remark}
Note that, as pointed out to us by A.Kuznetsov, the same result can be obtained using \cite[Thm. 5.5]{kuznetconicbundles}: since the
total space $X$ is smooth, the complete intersection of the net of conics is empty.
Recall from \cite{kuznetconicbundles} that we can define the sheaves $\kb_{2i}= \kb_0 \otimes \ko(i)$ and $\kb_{2i+1}=
\kb_1 \otimes \ko(i)$, and $\{\kb_i\}_{i=-3}^{-1}$ give a full exceptional collection for $\Db(\PP^2,\kb_0)$.
This follows indeed from \cite[Thm 5.5]{kuznetconicbundles}.
\end{remark}

\section{Rational conic bundles over Hirzebruch surfaces}\label{sect-hirze}

Let us consider now the case $S=\FF_n$. In this case, following (\cite{iskovconicduke, shokuprym}),
we have only two non-trivial possibilities for a standard conic bundle $\pi:X \to S$ to be rational: there must exist
a base point free pencil $L_0$ of rational curves such that either $L_0\cdot C = 3$ or $L_0\cdot C = 2$. In the first
case $C$ is trigonal, and in the second one $C$ is hyperelliptic. In both instances, the only such pencil is the natural
ruling of $S$. Hence, if we let $q: S \to \PP^1$ be the ruling map, the trigonal or hyperelliptic structure is
induced by the fibers of $q$.
As proved in Lemma \ref{lem-square-equiv}, once we fix the discriminant curve and the associated double
cover, we fix the Clifford algebra $\kb_0$. We then construct for
any such curve and associated double cover a model of rational standard conic bundle $X$ for which we provide the required
semiorthogonal decomposition.

We will consider relative realizations of the following classical construction: let $x_1, \ldots, x_4$ be four
points in general position in a projective plane $\PP^2$, and consider the pencil of all the plane conics passing through
$x_1, \ldots, x_4$. Blowing up $\PP^2$ along the four points, we get a conic bundle $Y \to \PP^1$ with degree three discriminant.
It is not difficult to see that a relative version of the preceding construction involves a $\PP^2$-bundle over $\PP^1$ containing a tetragonal curve $\Gamma \to \PP^1$, and the absolute construction performed fiberwise will give, after blow-up of $\Gamma$, a conic bundle $Y \to S$, for $S$ a Hirzebruch surface. In fact, we will show that, if $\Gamma$ is connected, then $Y$ is standard and has trigonal discriminant, which gives the first case.
If $\Gamma$ is the disjoint union of two hyperelliptic curves, the conic bundle $Y \to S$ is not standard and has discriminant the disjoint union of a hyperelliptic curve and a line. We will then perform a birational
transformation to obtained a standard conic bundle $\pi: X \to S$ with hyperelliptic discriminant. More precisely: fixed the discriminant curve and the double cover $\tilde{C} \to C$, we describe
a structure of conic bundle $\pi: X \to S$ following Casnati \cite{casnati} as the blow-up of a $\PP^2$-bundle
over $\PP^1$ along a certain tetragonal curve given by Recillas' construction
(\cite{recillas} for the trigonal case) or one of its degenerations (for the hyperelliptic case). These constructions can be performed for all the trigonal or hyperelliptic discriminant curves with at most nodes as singularities.
We describe the case of trigonal and hyperelliptic discriminant separately, following anyway the same path.  

The trigonal construction had already been used in the framework of conic bundles, in a slightly different context, in \cite{bertbrau}.

\subsection{Trigonal discriminant}\label{subsect-trigonal}

In the case where $C$ is a trigonal curve on $S$, we can give an explicit description of the conic bundle $\pi: X \to S$
degenerating along $C$, exploiting Recillas' trigonal construction \cite{recillas}.
We will develop the trigonal construction in the more general framework presented by Casnati in \cite{casnati}, that emphasizes
the conic bundle structure. For a detailed account in the curve case, with emphasis on the beautiful consequences on the structure
of the Prym map, see also \cite{donagi-prym}.

Before going through details let us recall from \cite{casnati-ekedhal} that any Gorenstein degree 3 cover $t':C \to \PP^1$ 
can be obtained inside a suitable $\PP^1$-bundle $S:= \PP(\kf)$ over $\PP^1$ as the zeros of a relative cubic 
form in two variables. On the other hand each Gorenstein degree 4 cover $t: \Gamma \to \PP^1$ is obtained 
\cite{casnati-ekedhal} as the base locus of a relative pencil of conics over $\PP^1$ contained in a $\PP^2$-bundle 
$Z:=\PP(\kg)$ over $\PP^1$. Moreover, the restriction, both to $C$ and $\Gamma$, of the natural projection of each projective 
bundle gives the respective finite cover map to $\PP^1$. For instance, the $\PP^2$-fiber $Z_x$ contains the four points of $\Gamma$ 
over $x\in \PP^1$. The first result of this section proves that any standard conic bundle with trigonal discriminant over a Hirzebruch
surface is realized naturally via this construction.

\begin{proposition}\label{prop:cb-with-trigo}
Let $\pi: X \to S$ be a standard conic bundle with trigonal discriminant curve $C$, where the trigonal structure $C \to \PP^1$ is induced by
the $\PP^1$-bundle structure $S \to \PP^1$. Then there exists a $\PP^2$-bundle $Z \to \PP^1$ containing
a tetragonal curve $\Gamma \to \PP^1$ whose tetragonal structure is induced by $Z \to \PP^1$ and such that $X$ is isomorphic
to the blow-up of $Z$ along $\Gamma$.
\end{proposition}

\begin{proof} 
First of all, given any admissible cover of a trigonal curve we describe a conic bundle structure over a Hirzebruch surface
as the blow-up of the $\PP^2$-bundle containing the tetragonal curve.

Consider the trigonal curve $C \to \PP^1$ sitting in the $\PP^1$-fibration $\PP(\kf) \to \PP^1$ and
the tetragonal curve $\Gamma$ inside the $\PP^2$-fibration $Z \to \PP^1$, associated to $C$ by the Casnati-Recillas construction.
Fix a point $x$ of $\PP^1$ and the corresponding fiber $Z_x$.
In this plane, consider the pencil of conics through the 4 points
$Z_x \cap \Gamma$. For each point of $\PP^1$, we obtain a pencil of conics with three degenerate conics. 
We then have a pencil of such conic pencils (parameterized by the ruled surface $S=\PP(\kf)$), which can be described as the 
2-dimensional family of vertical conics in $Z$ intersecting $\Gamma$ in all the four points of $\Gamma \cap Z_x$.
The standard conic bundle over $S$ is then given by resolving the linear system $\vert
\ko_{Z/\PP^1}(2) - \Gamma \vert$. That is, the blow-up of $Z$ along $\Gamma$ is a standard conic
bundle $\pi:X \to S$ with the required degeneration.

Thanks to Thm. 6.5 of \cite{casnati} (see also Thm 2.9 of \cite{donagi-prym}), to any 
trigonal Gorenstein curve $C$ with an admissible double cover,
we can associate a smooth tetragonal curve $\Gamma$ such that $C$ is the discriminant 
locus of the conic bundle that defines $\Gamma$. More precisely,
Let $X$ be the relative pencil of conics in the projective
bundle $Z\to \PP^1$ defined by $\Gamma$. We have a $\PP^1$-bundle $S$ over $\PP^1$,
a conic bundle structure $\pi:X \to S$ with discriminant curve $C$ in its natural embedding as a relative cubic form.

The proof is completed by showing that any conic bundle with trigonal discriminant can be realized in this way, but
Theorem 6.5 \cite{casnati} ensures that all 
trigonal curves with at most double points that sit in some ruled surface $S$ are discriminant divisors of a 
conic bundle (for details see \cite{casnati}, Sect. 5 and 6).
\end{proof}

Notice that this tight connection between trigonal and 
tetragonal curves is reflected also when considering the corresponding Prym and Jacobian varieties. The Prym variety 
of the admissible cover of $C$ induced by the conic bundle is in fact isomorphic to the Jacobian of $\Gamma$
\cite{recillas} and to the 
intermediate Jacobian of the conic bundle $X$.

Summarizing, we end up with the following commutative diagram:

$$\xymatrix{
& X \ar[ld]_{\pi} \ar[rd]^{\chi} & D \ar@{_{(}->}[l] \ar[rd]^{\chi}\\
S \ar[rd]_q & & Z \ar[ld]^p & \Gamma \ar@{_{(}->}[l] \\
& \PP^1,
}\label{trigodiagram}$$

where $p:Z \to \PP^1$ is a $\PP^2$-bundle, $\Gamma \subset Z$ the tetragonal curve,
$\chi: X \to Z$ the
blow-up of $\Gamma$ with exceptional divisor $D$. The surface $q: S \to \PP^1$ is ruled and $\pi: X \to S$ is the
conic bundle structure degenerating along the trigonal curve $C$.

We denote by $\ko(H) := \ko_{Z/\PP^1}(1)$ the relative ample line bundle on $Z$ and by $\ko(h) := \ko_{S/\PP^1}(1)$
the relative ample
line bundle on $S$. By abuse of notation, we still denote by $\ko(H)$ and $\ko(h)$ the pull-back of $\ko(H)$ and $\ko(h)$
via $\chi$
and $\pi$ respectively. The construction of the map $\pi$ gives $\ko(h) = \ko(2H - D)$,
from which we deduce that $\ko(D) = \ko(2H -h)$.
The canonical bundle $\omega_X$ is given by $\omega_X = \chi^* \omega_Z + \ko(D)$. Since we
have $\omega_Z = \omega_{Z/\PP^1} + p^* \omega_{\PP^1} = \ko(-3H)+ p^* 
\det(\ke^*) + p^* \omega_{\PP^1}$, we finally get
$\omega_X = \ko(-H -h) + \chi^* p^* \omega_{\PP^1}(\det(\ke^*))$.

\begin{proposition}\label{prop:trig-case}
Let $\pi:X \to S$ be a conic bundle whose discriminant locus $C$ is a trigonal curve whose trigonal structure is
given by the intersection of $C$ with the ruling $S \to \PP^1$.
Then there exist two exceptional
objects $E_1, E_2$ in $\Db(S,\kb_0)$ such that (up to equivalences):
$$\Db(S,\kb_0) = \langle \Db(\Gamma), E_1, E_2\rangle,$$
where $\Gamma$ is a smooth projective curve such that $J(X) \simeq J(\Gamma)$ as a principally polarized Abelian variety.
\end{proposition}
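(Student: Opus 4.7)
The plan is to follow the template used in Propositions \ref{prop:2ndcase} and \ref{prop:1stcase} for the plane cases: produce two different semiorthogonal decompositions of $\Db(X)$, compare them by counting and by explicit mutations, and read off the desired decomposition of $\Db(S,\kb_0)$.

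First I would apply Orlov's blow-up formula (Proposition \ref{decomp-blow-up}) to $\chi: X \to Z$ to obtain a fully faithful functor $\Psi: \Db(\Gamma) \to \Db(X)$ and $\Db(X) = \langle \Psi\Db(\Gamma), \chi^*\Db(Z)\rangle$. Since $p: Z \to \PP^1$ is a $\PP^2$-bundle, Proposition \ref{prop-decom-projbund} together with $\Db(\PP^1) = \langle \ko, \ko(1)\rangle$ expands $\Db(Z)$ into a six-term exceptional sequence of line bundles in $H$ and $f := \chi^*p^*\ko_{\PP^1}(1)$, giving (after choosing a convenient normalization)
$$\Db(X) = \langle \Psi\Db(\Gamma), -2H,\, -2H+f,\, -H,\, -H+f,\, \ko,\, f\rangle.$$
On the other side I would apply the Kuznetsov formula (Proposition \ref{decom-conic-bd}) to $\pi: X \to S$; since $S = \FF_n$ is itself a $\PP^1$-bundle over $\PP^1$, a second application of Proposition \ref{prop-decom-projbund} yields a four-term full exceptional sequence on $S$, and hence
$$\Db(X) = \langle \Phi\Db(S,\kb_0), -h,\, -h+f,\, \ko,\, f\rangle.$$
The numerology already matches: six line bundles on the right of $\Psi\Db(\Gamma)$ versus four on the right of $\Phi\Db(S,\kb_0)$, leaving exactly two extra exceptional objects $E_1, E_2$ as predicted by Remark \ref{numb-of-obj}.

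Next I would perform a sequence of mutations transporting the Kuznetsov decomposition into a form directly comparable with the blow-up one. The fundamental tool is Lemma \ref{lem-mut}, invoked through the canonical class $\omega_X = -H - h + \chi^*p^*\omega_{\PP^1} = -H - h - 2f$. Guided by the pattern of the plane case, I expect to: right-mutate $\Phi\Db(S,\kb_0)$ successively past $-h$ and $-h+f$, then left-mutate $f$ and $\ko$ through their left orthogonals (each such mutation twists the object by $\omega_X$), exchange certain completely orthogonal pairs so as to collect line bundles of the form $-H+\bullet$ on the left, and finally left-mutate the transformed $\Phi\Db(S,\kb_0)$ past the resulting line bundles. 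Comparison with the blow-up decomposition then identifies $E_1, E_2$ as (mutations of) two of the line bundles in $\{-2H,\, -2H+f,\, -H,\, -H+f\}$. The isomorphism $J(X) \simeq J(\Gamma)$ as principally polarized abelian varieties was already established by Recillas' construction recalled in the paragraph preceding the proposition.

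The main obstacle, exactly as in Lemmas \ref{complet-orth-1} and \ref{complet-orth-2}, will be establishing the complete orthogonality of several pairs of line bundles needed for the exchange steps; with six line bundles on the blow-up side and two independent divisor classes $H$ and $f$ instead of the single $H$ of the plane setting, more such orthogonalities must be proved. Each of them should nonetheless follow the argument of Lemma \ref{complet-orth-1}: build an auxiliary semiorthogonal decomposition by a single mutation in which the desired vanishing becomes an immediate consequence of semiorthogonality. Keeping careful track of the successive fully faithful functors $\Phi, \Phi', \Phi''$ obtained from $\Phi$ by composition with mutations, so as to conclude that the final subcategory is still equivalent to $\Db(S,\kb_0)$, is the remaining book-keeping task.
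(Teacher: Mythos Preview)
Your plan is correct in outline and would work, but it overlooks a simplification that the paper exploits. Instead of expanding $\Db(\PP^1)$ into the two line bundles $\ko,\ f$ on both sides, the paper keeps the blocks $\chi^*p^*\Db(\PP^1)\otimes(\text{line bundle})$ intact. On the blow-up side this gives
\[
\Db(X)=\langle \Psi\Db(\Gamma),\ \Db(\PP^1)-H,\ \Db(\PP^1),\ \Db(\PP^1)+H\rangle,
\]
and on the conic-bundle side
\[
\Db(X)=\langle \Phi\Db(S,\kb_0),\ \Db(\PP^1)-h,\ \Db(\PP^1)\rangle,
\]
where $\Db(\PP^1)$ stands for $\chi^*p^*\Db(\PP^1)=\pi^*q^*\Db(\PP^1)$. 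Now only \emph{two} mutations are needed: right-mutate $\Phi\Db(S,\kb_0)$ through the block $\Db(\PP^1)-h$, then right-mutate that block through its right orthogonal. By Lemma~\ref{lem-mut} the latter sends $\Db(\PP^1)-h$ to $\Db(\PP^1)-h-\omega_X$; since $\omega_X=-H-h+\chi^*p^*\omega_{\PP^1}$ and tensoring by $\chi^*p^*\omega_{\PP^1}$ is an autoequivalence of the block $\Db(\PP^1)$, this is just $\Db(\PP^1)+H$. Direct comparison yields $\Phi'\Db(S,\kb_0)=\langle\Psi\Db(\Gamma),\ \Db(\PP^1)-H\rangle$, and the two exceptional objects come from the last block.

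The payoff of the paper's route is that the $-2f$ part of $\omega_X$ is absorbed for free, so \emph{no} complete-orthogonality lemmas are needed at all---the analogue of Lemmas~\ref{complet-orth-1} and \ref{complet-orth-2} simply does not arise. Your approach, by splitting everything into six versus four individual line bundles, forces you to track the $f$-twists by hand and to prove several extra orthogonalities in order to perform the exchanges; this is feasible but considerably longer.
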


\begin{proof}
Consider the blow-up $\chi: X \to Z$. Orlov formula (see Prop. \ref{decomp-blow-up}) provides
a fully faithful functor $\Psi: \Db(\Gamma) \to \Db(X)$ and a semiorthogonal decomposition:
$$\Db(X) = \langle \Psi \Db(\Gamma), \chi^* \Db(Z) \rangle.$$
By Prop. \ref{prop-decom-projbund} we can choose the semiorthogonal decomposition
$\langle p^* \Db(\PP^1) \otimes \ko(- H), p^* \Db(\PP^1), p^* \Db(\PP^1) \otimes \ko(H) \rangle$
of $\Db(Z)$. We then get:
\begin{equation}\label{decomp-blow-trigonal}
\Db(X) = \langle \Psi \Db(\Gamma),\ \chi^* p^* \Db(\PP^1) \otimes \ko(- H),\
\chi^* p^* \Db(\PP^1),\ \chi^* p^* \Db(\PP^1) \otimes \ko(H) \rangle.
\end{equation}
Kuznetsov formula (see Prop. \ref{decom-conic-bd}) provides the decomposition:
$$\Db(X) = \langle \Phi \Db(S,\kb_0), \pi^* \Db(S) \rangle.$$
By Prop. \ref{prop-decom-projbund} we can choose the semiorthogonal decomposition
$\langle q^* \Db(\PP^1) \otimes \ko(-h), q^* \Db(\PP^1) \rangle$ of $\Db(S)$. We then
get:
\begin{equation}\label{decomp-conic-trigonal}
\Db(X) = \langle \Phi \Db(S,\kb_0),\ \pi^* q^* \Db(\PP^1) \otimes \ko(-h),\ \pi^* q^* \Db(\PP^1) \rangle.
\end{equation}
We perform now some mutation to compare the decompositions (\ref{decomp-blow-trigonal}) and (\ref{decomp-conic-trigonal}).
First of all, since $\pi^* q^* = \chi^* p^*$, we have $\pi^* q^* \Db(\PP^1) = \chi^* p^* \Db(\PP^1)$ and we will
denote this category simply by $\Db(\PP^1)$.

Consider the decomposition (\ref{decomp-conic-trigonal}) and mutate $\Phi \Db(S,\kb_0)$ to the right with respect
to $\Db(\PP^1) \otimes \ko(-h)$. The functor $\Phi' := \Phi \circ R_{\Db(\PP^1)\otimes \ko(-h)}$ is full and faithful
and we have the semiorthogonal decomposition
$$\Db(X) = \langle \Db(\PP^1)\otimes \ko(-h),\ \Phi' \Db(S,\kb_0),\ \Db(\PP^1) \rangle.$$
Perform the right mutation of $\Db(\PP^1)\otimes \ko(-h)$ through its right orthogonal. We have 
$$R_{<\Db(\PP^1)\otimes \ko(-h)>^{\perp}} (\Db(\PP^1)\otimes \ko(-h))
= \Db(\PP^1)\otimes \ko(-h) \otimes \omega_X^* \cong \Db(\PP^1) \otimes \ko(H).$$
Indeed $\omega_X = \ko(-H - h) + \chi^* p^* \omega_{\PP^1}(\det(\ke^*))$ and the
tensorization with $\chi^* p^* \omega_{\PP^1}(\det(\ke^*))$ gives an autoequivalence of $\Db(\PP^1)$. We then have the
decomposition
$$\Db(X) = \langle \Phi' \Db(S,\kb_0),\ \Db(\PP^1),\ \Db(\PP^1) \otimes\ko(H) \rangle.$$
Comparing this last decomposition with (\ref{decomp-blow-trigonal}) we get
$$\Phi' \Db(S,\kb_0) = \langle \Psi(\Gamma), \Db(\PP^1) \otimes \ko(-H) \rangle,$$
and the proof now follows recalling that $\Db(\PP^1)$ has a two-objects full exceptional sequence.
\end{proof}

\subsection{Hyperelliptic discriminant}

Also in the case where $C$ is a hyperelliptic curve on $S$, we can give an explicit description of the conic bundle $\pi: X \to S$
degenerating along $C$, exploiting Mumford's construction \cite{mumfordprym}: indeed giving a double cover $\tilde{C} \to C$
is equivalent to splitting the set $R$ of ramification points of $C$ into two sets $R_0$ and $R_1$ with an even number of
elements. Taking the hyperelliptic curves $\Gamma_0$ and $\Gamma_1$ ramified respectively on $R_0$ and $R_1$,
we have $\tilde{C} = \Gamma_0 \times_{\PP^1} \Gamma_1$ and $P(\tilde{C}/C) \simeq J(\Gamma_0) \oplus J(\Gamma_1)$.
The key remark here is that $X$ can be obtained via a birational transformation starting from a degenerate case of the 
Casnati-Recillas construction. Indeed, consider $\Gamma := \Gamma_0 \amalg \Gamma_1$. We have $J(\Gamma) = J(\Gamma_0) \oplus J(\Gamma_1)$
and a natural tetragonal structure on $\Gamma \to \PP^1$. As in the previous section, the results from \cite{casnati-ekedhal}
provide us a $\PP^2$-bundle $Z \to \PP^1$ and a Hirzebruch surface $S$ containing a trigonal curve $C'$.

\begin{lemma}\label{lem-the-nonstandard}
The curve $C'$ is the disjoint union of a line $L$ and a hyperelliptic curve $C \to \PP^1$ with ramification $R= R_0 \cup R_1$.
The double cover $\tilde{C'}\to C'$ is trivial along $L$.
The blow-up of $Z$ along $\Gamma$ gives a non-standard conic bundle structure $\pi':Y \to S$ whose discriminant
double cover is $\tilde{C'}\to C'$.
\end{lemma}
\begin{proof}
To construct of the conic bundle $\pi': Y \to S$ we proceed in the same way as in Proposition \ref{prop:cb-with-trigo}.
As Donagi pointed out \cite[Ex. 2.10]{donagi-prym}, in the Casnati-Recillas construction the tetragonal curve
$\Gamma=\Gamma_0\amalg \Gamma_1$ corresponds to the trigonal curve $C'$. 

On the other hand the double cover $\tilde{C}'$ of $C'$ splits as $\tilde{C}\amalg \PP^1 \amalg \PP^1$, where $\tilde{C}$
is a double cover of $C$. The $\PP^1 \amalg \PP^1$ part is the trivial disconnected double cover of $L$. 
This implies that the conic bundle $\pi':Y \to S$ obtained as the blow up of $Z$ along
$\Gamma$ is not standard.
Indeed, being the double cover of $L$ trivial implies that the preimage $G$ of $L$ inside $Y$
is the union of two Hirzebruch surfaces intersecting along a line.
\end{proof}

Notice that, since the double cover of $C'$ is trivial along $L$,
we have an isomorphism of principally polarized abelian varieties $P(\tilde{C}'/C')\cong P(\tilde{C}/C)$.
The standard conic bundle $\pi:X \to S$ degenerating along the hyperelliptic curve $C$ should then be described
as a birational transformation of $\pi':Y \to S$, smoothing the preimage $G$ of $L$.
Such birational transformation is a slight generalization of the elementary transformation
described in \cite[Sect. 2.1]{sarkisov-bira} and, roughly, it consists in contracting one of 
the two components of $G$ onto the intersection of the two.
After that, $L$ is no longer contained in the discriminant locus, hence the discriminant locus
is the hyperelliptic curve. This transformation corresponds to a birational transformation of the projective bundle $Z$.

In order to do that, we need to make a choice between the two disjoint components of $\Gamma= \Gamma_0 \amalg \Gamma_1$.
At the end of the section, we will see that this choice does not affect the required description of the derived category,
up to autoequivalences (see Remark \ref{rem-the-same}).
Let $\widetilde{Z} \to Z$ be the blow-up of $Z$ along $\Gamma_0$ and $\widetilde{D}$ the exceptional divisor.
Let $T\subset \widetilde{Z}$ be the strict transform of the ruled
surface obtained by taking the closure of the locus of lines spanned by each couple of points of $\Gamma_0$ associated
by the hyperelliptic involution.  
Let us denote $Q$ the 3-fold obtained from $\widetilde{Z}$ by blowing down $T$ to a line along the ruling.
Remark that, since $\Gamma_1$ is disjoint from $\Gamma_0$, then $\Gamma_1$ is embedded in $Q$.

\begin{lemma}\label{lem-Q-is-qb}
{\rm (i)}  There is a quadric bundle structure $\tau: Q \to \PP^1$ of relative dimension 2, with simple degeneration along the
ramification set $R_0$ of $\Gamma_0$. There is a natural embedding $\Gamma_1 \subset Q$.

{\rm(ii)} There exists a full and faithful functor $\bar{\Psi}_0 : \Db(\Gamma_0) \to \Db(Q)$ and a semiorthogonal decomposition
$$\Db(Q) = \langle \bar{\Psi}_0 \Db(\Gamma_0),\ \tau^* \Db(\PP^1),\ \tau^* \Db(\PP^1)\otimes \ko_{Q/\PP^1}(1)>.$$
\end{lemma}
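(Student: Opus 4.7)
The plan is to treat the two parts in sequence: a fiberwise geometric analysis for (i), and an application of Kuznetsov's quadric fibration theorem for (ii).

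For part (i), I would analyze the birational modification $Z \dashrightarrow Q$ fiber-by-fiber over the ruling $p: Z \to \PP^1$. At a generic point $x \in \PP^1 \setminus R_0$, the fiber $Z_x \simeq \PP^2$ meets $\Gamma_0$ transversally in two distinct points, and $T_x$ is the unique line joining them. Blowing up the two points in $\PP^2$ and then contracting the strict transform of the line through them is the classical Cremona-type transformation $\PP^2 \dashrightarrow \PP^1 \times \PP^1$, so the generic fiber of the induced map $\tau: Q \to \PP^1$ is a smooth quadric surface. Over a point $x \in R_0$ the two points of $\Gamma_0 \cap Z_x$ collide, and the same construction produces a rank-$3$ quadric (a quadric cone). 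A local analytic calculation in a neighbourhood of such a ramification point, using that $\Gamma_0$ is smooth and meets $Z_x$ with multiplicity two, shows that $Q$ is smooth and that the degeneration is simple. This proves that $\tau$ is a quadric bundle of relative dimension $2$ with discriminant exactly $R_0$.

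For part (ii), I would invoke Kuznetsov's semiorthogonal decomposition \cite{kuznetconicbundles} for a flat quadric fibration of relative dimension $2$:
\begin{equation*}
\Db(Q) = \langle \Db(\PP^1, \kb_0),\ \tau^* \Db(\PP^1),\ \tau^* \Db(\PP^1) \otimes \ko_{Q/\PP^1}(1) \rangle,
\end{equation*}
where $\kb_0$ is the even Clifford algebra on $\PP^1$ associated to the quadratic form defining $\tau$. The remaining point is to identify $\Db(\PP^1, \kb_0) \simeq \Db(\Gamma_0)$ and set $\bar{\Psi}_0$ to be the composition of this equivalence with the embedding above. For a pencil of quadric surfaces with simple degeneration, the relative Fano scheme of lines decomposes into its two rulings whose Stein factorization is precisely the hyperelliptic double cover of $\PP^1$ ramified along the discriminant, and by construction this curve is $\Gamma_0$. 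By Tsen's theorem the Brauer class of $\kb_0$ over $\PP^1$ is trivial, so $\kb_0$ is Morita equivalent to the pushforward of $\ko_{\Gamma_0}$, which provides the desired equivalence of derived categories.

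The main obstacle is the identification $\Db(\PP^1, \kb_0) \simeq \Db(\Gamma_0)$: while Kuznetsov's decomposition and the fiberwise Cremona description of $Q$ are essentially formal once the setup is in place, matching the Clifford algebra $\kb_0$ with $\Gamma_0$ requires the vanishing of the Brauer obstruction on $\PP^1$ together with a precise local analysis at the simple degenerations to show that the étale double cover of $\PP^1 \setminus R_0$ defined by the rulings extends to the hyperelliptic curve $\Gamma_0$ coming from the degenerate Casnati--Recillas construction.
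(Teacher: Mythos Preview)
Your approach is essentially the same as the paper's. For (i) the paper also argues fiberwise: blowing up the two points $a_0,b_0$ of $\Gamma_0\cap Z_x$ in $Z_x\simeq\PP^2$ and contracting the line through them yields a smooth quadric when $a_0\neq b_0$ and a quadric cone when they coincide (the paper notes that $\FF_1$ is the blow-up of a quadric cone at its node). For (ii) the paper likewise applies Kuznetsov's quadric-fibration decomposition, but for the identification $\Db(\PP^1,\kc_0)\simeq\Db(\Gamma_0)$ it simply invokes \cite[Cor.~3.14]{kuznetconicbundles} rather than unpacking it via the Fano scheme of rulings and Tsen's theorem as you do; your sketch is precisely the content of that corollary (one small correction: the relevant Brauer class lives on the double cover $\Gamma_0$, where $\kb_0$ becomes Azumaya, not on $\PP^1$ itself).
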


\begin{proof}
(i) Consider a point $x$ in $\PP^1$ and the fiber $Z_x$, which is a projective plane. Let $a_i$, $b_i$ be
the points where $\Gamma_i$ intersects $Z_x$. Then if we blow-up $a_0$ and $b_0$ and we contract the line
through them, we get a birational map $Z_x \dashrightarrow Q_x$, where $Q_x$ is a quadric surface,
which is smooth if and only if $a_0 \neq b_0$ \cite[pag. 85]{harris-first-course}
and has simple degeneration otherwise, in fact $\mathbb{F}_2$ is isomorphic to the blow up of a quadric cone in its node.

(ii) By \cite{kuznetconicbundles}, if we denote by $\kc_0$ the sheaf
of even parts of the Clifford algebra associated to $\tau$, there is a fully faithful functor $\bar{\Psi}_0:
\Db(\PP^1,\kc_0) \to \Db(Q)$ and a semiorthogonal decomposition
$$\Db(Q) = \langle \bar{\Psi}_0 \Db(\PP^1,\kc_0),\ \tau^* \Db(\PP^1),\ \tau^* \Db(\PP^1) \otimes \ko_{Q/\PP^1}(1)>.$$
Now apply \cite[Cor. 3.14]{kuznetconicbundles} to get the equivalence $\Db(\PP^1,\kc_0) \cong \Db(\Gamma_0)$.
\end{proof}

\begin{proposition}\label{prop-hyperell-conic-bd}
Let $\pi:X \to S$ be a standard conic bundle over a Hirzebruch surface with hyperelliptic discriminant. Let $R \subset
\PP^1$ be the ramification locus of $C$, $\Gamma_0$ and $\Gamma_1$ two hyperelliptic curves, with ramification loci
$R_0$ and $R_1$ respectively, such that $R=R_0 \cup R_1$ and $J(X) \simeq P(\tilde{C}/C) = J(\Gamma_0) \oplus J(\Gamma_1)$.
Then there is a quadric surface bundle $Q \to \PP^1$ degenerating along $R_0$, such that $\Gamma_1 \subset Q$ and
$X$ is isomorphic to the blow-up of $Q$ along $\Gamma_1$.
\end{proposition}
\begin{proof}
Given $\Gamma = \Gamma_0 \amalg \Gamma_1$,
consider the trigonal curve $C' = C \amalg \PP^1$ and the conic bundle $\pi':Y \to S$ from Lemma \ref{lem-the-nonstandard},
the blow-up of $Z$ along $\Gamma$. 
As proved in Lemma \ref{lem-Q-is-qb} the $\PP^2$-bundle $Z$ has been transformed into the quadric bundle $Q$ containing
the curve $\Gamma_1$. For any point $x$ of $\PP^1$, the pencil of conics in $Z_x$  passing through
$a_0,b_0,a_1,b_1$ has been transformed into a pencil of hyperplane sections of $Q_x$ passing through $a_1$ and $b_1$.
Hence each line of the ruling of $S$ corresponds to a pencil of quadratic hyperplane sections.
Moreover the conics over the rational curve $L\subset S$ had simple degeneration in $Y$ and are smooth in $X$.
On the rest of the ruled surface $S$ the degeneration type of the conics is preserved.

This implies that $\pi:X\to S$ is a standard conic bundle degenerating along the hyperelliptic $C$.
It is given by resolving the relative linear system $\vert \ko_{Q/\PP^1}(1)- \Gamma_1 \vert$.

Moreover, to any conic bundle with hyperelliptic discriminant and double cover $\tilde{C} \to C$, one
can associate the tetragonal curve $\Gamma$ and perform this construction.
\end{proof}
Summarizing, we end up with the following diagram:

$$\xymatrix{
& X \ar[dl]_{\pi} \ar[rd]^{\chi} & D \ar@{_{(}->}[l] \ar[rd]^{\chi}\\
S \ar[rd]_q & & Q \ar[ld]^{\tau} & \Gamma_1 \ar@{_{(}->}[l]\\
& \PP^1,
}$$

Where $\tau: Q \to \PP^1$ is a quadric bundle degenerating
exactly in the ramification locus of $\Gamma_0 \to \PP^1$ and contains the hyperelliptic curve $\Gamma_1$.
The map $\chi$ is the blow-up of $Q$ along $\Gamma_1$ with exceptional divisor $D$. The surface $q: S \to \PP^1$ is
ruled and $\pi: X \to S$ is the
conic bundle structure degenerating along the hyperelliptic curve $C$. Remark that $J(X)$ is isomorphic to $J(\Gamma_0) \oplus
J(\Gamma_1)$ as principally polarized Abelian variety. Since $J(X) \cong P(\tilde{C}/C)$, if $C$ is smooth it can be shown that $R_1 \cup R_0=R$ and the configuration of Pryms and Jacobians is the one described by Mumford in \cite{mumfordprym}.

We denote by $\ko(H):=\ko_{Q/\PP^1}(1)$ the relative ample line bundle on $Q$. We have $\omega_{Q/\PP^1} = \ko(-2H)$.
Denote by $\ko(h) := \ko_{S/\PP^1}(1)$ the relative ample
line bundle on $S$. By abuse of notation, we still denote by $\ko(H)$ and $\ko(h)$ the pull-backs of $\ko(H)$
and $\ko(h)$ via $\chi$
and $\pi$ respectively. The construction of the map $\pi$ gives $\ko(h) = \ko(H - D)$, from which we deduce
that $\ko(D) =\ko(H -h)$.

The canonical bundle $\omega_X$ is given by $\omega_X = \chi^* \omega_Q + D$. Since we
have $\omega_Q = \omega_{Q/\PP^1} + \tau^* \omega_{\PP^1}$, we finally get
$\omega_X = \ko(-H -h) + \chi^* \tau^* \omega_{\PP^1}$.

\begin{proposition}\label{prop:hyperell-case}
Let $\pi:X \to S$ be a conic bundle whose discriminant locus $C$ is a hyperelliptic curve whose hyperelliptic structure is
given by the intersection of $C$ with the ruling $S \to \PP^1$.
Then (up to equivalences):
$$\Db(S,\kb_0) = \langle \Db(\Gamma_1), \Db(\Gamma_0) \rangle,$$
where $\Gamma_0$ and $\Gamma_1$ are smooth projective curves such that $J(X) \simeq J(\Gamma_0) \oplus J(\Gamma_1)$
as a principally polarized Abelian variety.
\end{proposition}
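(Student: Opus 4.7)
The plan is to prove this in direct analogy with the trigonal case (Proposition \ref{prop:trig-case}), combining Orlov's blow-up formula for $\chi: X \to Q$, the description of $\Db(Q)$ provided by Lemma \ref{lem-Q-is-qb}(ii), the Kuznetsov formula for $\pi: X \to S$, and the projective bundle formula for $q: S \to \PP^1$, then matching the resulting decompositions via a short sequence of mutations. The essential new ingredient compared to the trigonal case is precisely Lemma \ref{lem-Q-is-qb}, which replaces the $\PP^2$-bundle $Z$ (in the trigonal picture) with the quadric bundle $Q$ but still exhibits $\Db(Q)$ as a three-piece semiorthogonal decomposition, one block of which is $\Db(\Gamma_0)$.

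First, I apply Proposition \ref{decomp-blow-up} to the blow-up $\chi: X \to Q$ along $\Gamma_1$, obtaining a fully faithful functor $\Psi_1: \Db(\Gamma_1) \to \Db(X)$ and $\Db(X) = \langle \Psi_1 \Db(\Gamma_1), \chi^* \Db(Q) \rangle$. Inserting the decomposition of $\Db(Q)$ from Lemma \ref{lem-Q-is-qb}(ii) yields
$$\Db(X) = \langle \Psi_1 \Db(\Gamma_1),\ \chi^* \bar\Psi_0 \Db(\Gamma_0),\ \Db(\PP^1),\ \Db(\PP^1) + H \rangle,$$
where I abbreviate $\chi^* \tau^* \Db(\PP^1)$ to $\Db(\PP^1)$ and $\chi^* H = H$. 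On the other side, Proposition \ref{decom-conic-bd} combined with the projective bundle formula $\Db(S) = \langle q^* \Db(\PP^1) - h, q^* \Db(\PP^1) \rangle$ and the identity $\pi^* q^* = \chi^* \tau^*$ gives
$$\Db(X) = \langle \Phi \Db(S,\kb_0),\ \Db(\PP^1) - h,\ \Db(\PP^1) \rangle.$$

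The heart of the proof is then a pair of mutations, identical to those in Proposition \ref{prop:trig-case}. Right-mutating $\Phi \Db(S,\kb_0)$ through $\Db(\PP^1) - h$ produces a fully faithful $\Phi' := \Phi \circ R_{\Db(\PP^1)-h}$ and the decomposition $\Db(X) = \langle \Db(\PP^1) - h,\ \Phi' \Db(S,\kb_0),\ \Db(\PP^1) \rangle$. Next, right-mutating $\Db(\PP^1) - h$ through its right orthogonal, Lemma \ref{lem-mut} identifies the result with $(\Db(\PP^1) - h) \otimes \omega_X^{-1}$. Using $\omega_X = -H - h + \chi^* \tau^* \omega_{\PP^1}$ and the fact that tensorization with $\chi^*\tau^* \omega_{\PP^1}$ is an autoequivalence of $\chi^* \tau^* \Db(\PP^1)$, this becomes $\Db(\PP^1) + H$, giving
$$\Db(X) = \langle \Phi' \Db(S,\kb_0),\ \Db(\PP^1),\ \Db(\PP^1) + H \rangle.$$
Comparing the last two components with the Orlov-based decomposition, one concludes (up to equivalences) that $\Phi' \Db(S,\kb_0) = \langle \Psi_1 \Db(\Gamma_1),\ \chi^* \bar\Psi_0 \Db(\Gamma_0) \rangle$, which is the desired statement.

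I do not foresee a serious obstacle, since the canonical bundle formula and the mutation pattern coincide with those of the trigonal case; the only nontrivial input specific to this setting is Lemma \ref{lem-Q-is-qb}(ii), already established. The final assertion on principal polarizations is immediate from Theorem \ref{main-theorem1} applied to the fully faithful functors $\Psi_1$ and $\chi^* \bar\Psi_0$ (combined with the fact that $J(X) \cong J(\Gamma_0) \oplus J(\Gamma_1)$ already noted geometrically before the statement).
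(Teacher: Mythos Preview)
Your proposal is correct and follows essentially the same argument as the paper's proof: Orlov's formula for $\chi$ combined with Lemma \ref{lem-Q-is-qb}(ii), Kuznetsov's formula for $\pi$ combined with the projective-bundle decomposition of $\Db(S)$, and then the same two right mutations (using $\omega_X = -H-h+\chi^*\tau^*\omega_{\PP^1}$) to match the two decompositions. The paper likewise does not reprove the Jacobian splitting inside this proof, relying on the geometric identification $J(X)\cong J(\Gamma_0)\oplus J(\Gamma_1)$ stated just before the proposition.
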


\begin{proof}
Consider the blow-up $\chi: X \to Q$. Orlov formula (see Prop. \ref{decomp-blow-up}) provides a fully faithful
functor $\Psi_1: \Db(\Gamma_1) \to \Db(X)$ and a semiorthogonal decomposition:
$$\Db(X) = \langle \Psi_1 \Db(\Gamma_1), \chi^* \Db(Q) \rangle.$$
Lemma \ref{lem-Q-is-qb} gives us
\begin{equation}\label{decomp-blow-hyperell}
\Db(X) = \langle \Psi_1 \Db(\Gamma_1),\ \Psi_0 \Db(\Gamma_0),\ \chi^* \tau^* \Db(\PP^1),\
\chi^* \tau^* \Db(\PP^1)\otimes \ko(H) \rangle,
\end{equation}
where $\Psi_0 = \bar{\Psi}_0 \circ \chi^*$ is fully faithful.

Kuznetsov formula (see Prop. \ref{decom-conic-bd}) provides the decomposition:
$$\Db(X) = \langle \Phi \Db(S,\kb_0), \pi^* \Db(S) \rangle.$$
By Prop. \ref{prop-decom-projbund} we can choose the semiorthogonal decomposition
$\langle q^* \Db(\PP^1), q^* \Db(\PP^1)\otimes\ko(-h) \rangle$ of $\Db(S)$. We then
get
\begin{equation}\label{decomp-conic-hyperell}
\Db(X) = \langle \Phi \Db(S,\kb_0),\ \pi^* q^* \Db(\PP^1)\otimes\ko(-h),\ \pi^* q^* \Db(\PP^1) \rangle.
\end{equation}
We perform now some mutation to compare the decompositions (\ref{decomp-blow-hyperell}) and (\ref{decomp-conic-hyperell}).
First of all, since $\pi^* q^* = \chi^* \tau^*$, we have $\pi^* q^* \Db(\PP^1) = \chi^* \tau^* \Db(\PP^1)$ and we will
denote this category simply by $\Db(\PP^1)$.

Consider the decomposition (\ref{decomp-conic-hyperell}) and mutate $\Phi \Db(S,\kb_0)$ to the right through
$\Db(\PP^1)\otimes\ko(-h)$. The functor $\Phi' := \Phi \circ R_{\Db(\PP^1)\otimes\ko(-h)}$ is full and faithful
and we have the semiorthogonal decomposition
$$\Db(X) = \langle \Db(\PP^1)\otimes\ko(-h),\ \Phi' \Db(S,\kb_0),\ \Db(\PP^1) \rangle.$$
Perform the right mutation of $\Db(\PP^1)\otimes\ko(-h)$ through its right orthogonal. We have 
$$R_{<\Db(\PP^1)\otimes\ko(-h)>^{\perp}} (\Db(\PP^1)\otimes\ko(-h))
= \Db(\PP^1)\otimes\ko(-h) \otimes \omega_X^* \cong \Db(\PP^1)\otimes(H).$$
Indeed $\omega_X = \ko(-H - h) + \chi^* \tau^* \omega_{\PP^1}$ and the
tensorization with $\chi^* \tau^* \omega_{\PP^1}$ gives an autoequivalence of $\Db(\PP^1)$. We then have the
decomposition
$$\Db(X) = \langle \Phi' \Db(S,\kb_0),\ \Db(\PP^1),\ \Db(\PP^1)\otimes\ko(H) \rangle.$$
Comparing this last decomposition with (\ref{decomp-conic-hyperell}) we get
$$\Phi' \Db(S,\kb_0) = \langle \Psi_1 \Db(\Gamma_1), \Psi_0 \Db(\Gamma_0)\rangle.$$
\end{proof}

\begin{remark}\label{rem-the-same}
Remark that the choice of blowing up first $\Gamma_0$ and then $\Gamma_1$ has no influence (up to equivalence)
on the statement of Proposition \ref{prop:hyperell-case}.
\end{remark}

\subsection{A non-rational example}\label{non-rat-ex}
Theorem \ref{main-theorem1} states that if $\pi: X \to S$ is standard and $S$ rational, then a semiorthogonal
decomposition of $\Db(S,\kb_0)$ (and then of $\Db(X)$) via derived categories of curves and exceptional objects allows to reconstruct
the intermediate Jacobian $J(X)$ as the direct sum of the Jacobians of the curves. It is clear by the technique used, that
$S$ being rational is crucial. Using the construction by Casnati \cite{casnati}, we provide here examples of standard conic bundles
$\pi: X \to S$ over a non-rational surface such that both $\Db(X)$ and $\Db(S,\kb_0)$ admit a decomposition via derived
categories of smooth projective curves. In these cases, $X$ is clearly non-rational, and
$J(X)$ is only isogenous to $P(\tilde{C}/C) \oplus A^2(S) \oplus A^1(S)$ \cite{beltrachow}.

Let $G$ be a smooth projective curve of positive genus. Remark that $\Db(G)$ contains no exceptional object,
because of Serre duality. Consider a smooth degree four cover $\Gamma \to G$, and its embedding in a $\PP^2$-bundle
$Z \to G$. By \cite{casnati}, there is a unique
degree 3 cover $C \to G$ embedded in a ruled surface $S \to G$, and we suppose that $C$ has at most double points.
As in \ref{subsect-trigonal}, we end up with a commutative diagram:

$$\xymatrix{
& X \ar[ld]_{\pi} \ar[rd]^{\chi} & D \ar@{_{(}->}[l] \ar[rd]^{\chi}\\
S \ar[rd]_q & & Z \ar[ld]^p & \Gamma \ar@{_{(}->}[l] \\
& G,
}\label{nonratdiagram}$$

where $X$ is the blow-up of $Z$ along $\Gamma$, $D$ the exceptional divisor and $\pi: X \to S$ a standard conic bundle
degenerating along $C$, induced by the relative linear system $| \ko_{Z/G}(2) - \Gamma|$. Orlov formula for blow-ups (see Prop.
\ref{decomp-blow-up}) and Prop. \ref{prop-decom-projbund} give a semiorthogonal decomposition
$$\Db(X) = \langle \Psi \Db(\Gamma), \Db(G)\otimes(-H),\Db(G),\Db(G)\otimes(H) \rangle,$$
where we keep the notation of \ref{subsect-trigonal} and we write $\Db(G) := \chi^* p^* \Db(G) = \pi^* q^* \Db(G)$.
Then $\Db(X)$ is decomposed by derived categories of smooth projective curves.
Going through the proof of Proposition \ref{prop:trig-case}, it is clear that replacing $\PP^1$ with $G$ does not affect any calculation,
except the fact that $\Db(G)$ contains no exceptional object. Keeping the same notation, we end up with the semiorthogonal decomposition

$$\Phi'\Db(S,\kb_0) = \langle \Psi\Db(\Gamma), \Db(G) \otimes(-H) \rangle.$$

\end{document}